\numberwithin{equation}{section}
\begin{document}

\newtheorem{theorem}{Theorem}[section]
\newtheorem{lemma}[theorem]{Lemma}
\newtheorem{proposition}[theorem]{Proposition}
\newtheorem{corollary}[theorem]{Corollary}

\theoremstyle{definition}
\newtheorem{definition}[theorem]{Definition}
\newtheorem{example}[theorem]{Example}

\theoremstyle{remark}
\newtheorem{remark}[theorem]{Remark}

\newenvironment{magarray}[1]
{\renewcommand\arraystretch{#1}}
{\renewcommand\arraystretch{1}}

\newcommand{\mapor}[1]{\smash{\mathop{\longrightarrow}\limits^{#1}}}
\newcommand{\mapin}[1]{\smash{\mathop{\hookrightarrow}\limits^{#1}}}
\newcommand{\mapver}[1]{\Big\downarrow
\rlap{$\vcenter{\hbox{$\scriptstyle#1$}}$}}
\newcommand{\liminv}{\smash{\mathop{\lim}\limits_{\leftarrow}\,}}

\newcommand{\Set}{\mathbf{Set}}
\newcommand{\Art}{\mathbf{Art}}
\newcommand{\solose}{\Rightarrow}

\renewcommand{\bar}{\overline}
\newcommand{\de}{\partial}
\newcommand{\debar}{{\overline{\partial}}}
\newcommand{\per}{\!\cdot\!}
\newcommand{\Oh}{\mathcal{O}}
\newcommand{\sA}{\mathcal{A}}
\newcommand{\sB}{\mathcal{B}}
\newcommand{\sC}{\mathcal{C}}
\newcommand{\sF}{\mathcal{F}}
\newcommand{\sG}{\mathcal{G}}
\newcommand{\sH}{\mathcal{H}}
\newcommand{\sI}{\mathcal{I}}
\newcommand{\sL}{\mathcal{L}}
\newcommand{\sM}{\mathcal{M}}
\newcommand{\sP}{\mathcal{P}}
\newcommand{\sU}{\mathcal{U}}
\newcommand{\sV}{\mathcal{V}}
\newcommand{\sX}{\mathcal{X}}
\newcommand{\sY}{\mathcal{Y}}

\newcommand{\Aut}{\operatorname{Aut}}
\newcommand{\Mor}{\operatorname{Mor}}
\newcommand{\Def}{\operatorname{Def}}
\newcommand{\Hom}{\operatorname{Hom}}
\newcommand{\Alt}{\operatorname{Alt}}
\newcommand{\Tot}{\operatorname{Tot}}
\newcommand{\HOM}{\operatorname{\mathcal H}\!\!om}
\newcommand{\DER}{\operatorname{\mathcal D}\!er}
\newcommand{\Spec}{\operatorname{Spec}}
\newcommand{\Der}{\operatorname{Der}}
\newcommand{\Tor}{{\operatorname{Tor}}}
\newcommand{\Ext}{\operatorname{Ext}}
\newcommand{\Sym}{\operatorname{Sym}}
\newcommand{\End}{{\operatorname{End}}}
\newcommand{\END}{\operatorname{\mathcal E}\!\!nd}
\newcommand{\Image}{\operatorname{Im}}
\newcommand{\coker}{\operatorname{coker}}
\newcommand{\Id}{\operatorname{Id}}

\renewcommand{\Hat}[1]{\widehat{#1}}
\newcommand{\dual}{^{\vee}}
\newcommand{\desude}[2]{\dfrac{\de #1}{\de #2}}

\newcommand{\A}{\mathbb{A}}
\newcommand{\N}{\mathbb{N}}
\newcommand{\R}{\mathbb{R}}
\newcommand{\Z}{\mathbb{Z}}
\renewcommand{\H}{\mathbb{H}}
\newcommand{\C}{\mathbb{C}}
\newcommand{\proj}{\mathbb{P}}
\newcommand{\K}{\mathbb{K}\,}

%%%%%%%%%%%%%%%%%%%%%%%%%%%%%%%%%%%%%%%%%%%%%%%%%%%%%%%%%%%%%%%%%%%%5555

\newcommand{\contr}{{\mspace{1mu}\lrcorner\mspace{1.5mu}}}
\newcommand{\wedgebar}{\;{\scriptstyle\mathchar'26\mkern-10.5mu\wedge}\;}

\newcommand{\bi}{\boldsymbol{i}}
\newcommand{\bl}{\boldsymbol{l}}

\newcommand{\MC}{\operatorname{MC}}
\newcommand{\Coder}{{\operatorname{Coder}}}

%%%%%%%%%%%%%%%%%%%%%%%%%%%%%%
% comandi provvisori 
\newcommand{\QUI}{\bigskip\bigskip\textbf{********** Segnaposto ********}\bigskip\bigskip}

\title{On some formality criteria for DG-Lie algebras}
\date{December 5, 2013}

\author{Marco Manetti}
\address{\newline
Universit\`a degli studi di Roma ``La Sapienza'',\hfill\newline
Dipartimento di Matematica \lq\lq Guido
Castelnuovo\rq\rq,\hfill\newline
P.le Aldo Moro 5,
I-00185 Roma, Italy.}
\email{manetti@mat.uniroma1.it}
\urladdr{www.mat.uniroma1.it/people/manetti/}

\renewcommand{\subjclassname}{%
\textup{2010} Mathematics Subject Classification}

\subjclass[2010]{17B56,17B70, 16T15}
\keywords{Differential graded Lie algebras, spectral sequences, 
$L_{\infty}$-algebras}

\begin{abstract}
We give some formality criteria for a differential graded Lie algebra to be formal. 
For instance, we show that
a DG-Lie algebra $L$ is formal if and only if the natural spectral sequence computing 
the Chevalley-Eilenberg cohomology  $H^*_{CE}(L,L)$  degenerates at $E_2$. 
\end{abstract}

\maketitle

\section{Introduction}

The notion of formality of a differential graded commutative algebra has been quite familiar 
in mathematics since the works by Deligne, Griffiths, Morgan, Sullivan \cite{DGMS}, where it is proved that
the de Rham algebra of a compact K\"{a}hler manifold $X$ is formal and therefore its homotopy class, controlling the real homotopy type of $X$, is uniquely determined by the cohomology algebra $H^*(X,\mathbb{R})$.

Similarly, the notion of formality of a differential graded Lie algebra  has received a great attention after the papers of Goldman, Millson \cite{GoMil1} and Kontsevich \cite{K}. 
In  \cite{GoMil1} the authors realize that  the same approach of \cite{DGMS} can be used to prove the 
formality of the differential graded Lie algebra of differential forms with values in certain flat bundles of Lie algebras; as a   consequence of this fact they proved that the moduli space of certain representations of the fundamental  group of a   compact K\"{a}hler manifold has at most quadratic singularities.

In the paper \cite{K}  Kontsevich proved that, when $A$ is the algebra of smooth functions on a differentiable 
manifold, then the natural DG-Lie algebra structure on the 
Hochschild cohomology complex of $A$ with coefficients in $A$ is formal, and then  proving that 
every finite dimensional Poisson manifolds admits a canonical deformation quantization.

Recall that a DG-Lie algebra $L$ is formal if there exists a pair of quasi-isomorphisms of DG-Lie algebras
\[ L\xleftarrow{\qquad}M\xrightarrow{\qquad}H\]
with $H$ having trivial differential. A   
DG-Lie algebra $L$ is called homotopy abelian if there exists a pair of quasi-isomorphisms of DG-Lie algebras

\[ L\xleftarrow{\qquad}M\xrightarrow{\qquad}H\]
with $H$ having trivial bracket and trivial differential. Thus,  a DG-Lie algebra $L$ is homotopy abelian if and only if it is formal and the cohomology graded Lie algebra $H^*(L)$ is abelian.

In view of the general principle that, in characteristic 0 every deformation problem is controlled by a DG-lie algebras, with quasi-isomorphic DG-Lie algebras giving the same deformation problem \cite{GoMil1}, 
both the notions of formality  and homotopy abelianity play a central role in deformation theory. 
Fortunately, in literature there exist several general and very useful criteria for  homotopy abelianity. 
For example, it is known (see e.g. \cite{semireg2011} and references therein) that for a morphism 
of DG-Lie algebras 
 $f\colon L\to M$ we have:

\begin{enumerate}

\item if $L$ is homotopy abelian and $f\colon H^*(L)\to H^*(M)$ is surjective, then also $M$ is homotopy abelian;

\item if $M$ is homotopy abelian and $f\colon H^*(L)\to H^*(M)$ is injective, then also $L$ is homotopy abelian.
\end{enumerate}

The above results, which are completely symmetric in their proofs, appear quite different in 
their applications, being the latter used in almost all the algebraic proofs of generalized 
Bogomolov-Tian-Todorov theorems \cite{IaconoDP,KKP}, as well in deformation theory of holomorphic Poisson manifolds and coisotropic submanifolds \cite{BaMacoisotropic,FMpoisson}.

The initial motivation for this paper  was to seek for an analog of 
the above item (2) when the notion of homotopy abelianity is replaced with the notion of formality; 
it is easily verified that if $M$ is formal, then the injectivity of $f\colon H^*(L)\to H^*(M)$ 
is not sufficient to ensure the formality of $L$. 

In our proposed extension  of item (2) for formality (Theorem~\ref{thm.formalitytransfer}),
the cohomology graded Lie algebras $H^*(L)$ and $H^*(M)$ are replaced with suitable Chevalley-Eilenberg cohomology groups. In particular we shall prove that if $f\colon L\to M$ is a morphism of differential graded Lie algebras, with $M$ formal and   
$f\colon H^2_{CE}(H^*(L),H^*(L))\to H^2_{CE}(H^*(L),H^*(M))$ injective, then also $L$ is formal.
 
In doing this we have been deeply inspired by the papers \cite{kaledin,lunts}, where it 
is explained  what is  the ``right'' obstruction to formality of a DG-algebra and by 
\cite{bandierakapra}, 
where it is proved that  homotopy abelianity is equivalent to   
the degeneration at $E_1$ of the natural spectral sequence computing the 
Chevalley-Eilenberg cohomology.

The paper is organized as follows: in Section~\ref{sec.CEspectral} we introduce the Chevalley-Eilenberg complex of a differential graded Lie algebra as the natural generalization of the classical Chevalley-Eilenberg complex of a Lie algebra \cite{CE}; here the choice of signs of the differential is purely teleological and 
made by taking into account the 
sign convention used in  the definition of d\'ecalage maps given in 
Section~\ref{sec.formalityDGLA}. Such a complex admits a natural filtration giving a cohomology spectral sequence.

Since almost all the proofs of this paper 
require a good knowledge of $L_{\infty}[1]$-algebras and $L_{\infty}$-morphisms, 
for the benefit of the readers which are not familiar with these notions, in Section~\ref{sec.mainresults} 
we state the main results of the paper about formality of DG-Lie algebras. 
These results will be proved in Section~\ref{sec.formalityDGLA} as particular cases of some more general results 
concerning the formality of $L_{\infty}[1]$-algebras. 
Among the applications of these results we give a proof of the fact that for every graded vector space $V$ the graded Lie algebra $\Hom^*_{\K}(V,V)$ is intrinsically formal.

In Section~\ref{sec.review} we give a short review of the definition of $L_{\infty}[1]$-algebras, $L_{\infty}$-morphisms and Nijenhuis-Richardson bracket. In Section~\ref{sec.homotopyinvariance} we define the 
Chevalley-Eilenberg spectral sequence of an   $L_{\infty}[1]$-algebra and we prove that, 
for every $r>0$, its page $E_r$ is homotopy invariant. 
Finally in Section~\ref{sec.criteriaLinfinity} we prove the formality criteria for $L_{\infty}[1]$-algebras.

In the last section  we show how deformation theory can be used for constructing 
simple examples of non formal differential graded Lie algebras.

\bigskip
\section{The Chevalley-Eilenberg spectral sequence}
\label{sec.CEspectral}

Throughout this paper every vector space, tensor product, Lie algebra etc. is considered over a fixed field $\K$ 
of characteristic 0. By a DG-vector space we shall mean a $\Z$-graded vector space equipped with a 
differential of degree $+1$; a DG-Lie algebra is a Lie object in the category of 
DG-vector spaces. Given a homogeneous vector $v$ on a graded vector space, its degree will be denoted 
either $\bar{v}$ or $\deg(v)$.

\medskip
Let $L=(L,d,[-,-])$ be a differential graded Lie algebra and 
let $M$ be an $L$-module. 
This means that $M=(M,d)$ is a DG-vector space and it is given a 
morphism of DG-vector spaces
\[ [-,-]\colon M\otimes L\to M\]
such that 
\[[m,[x,y]]=[[m,x],y]-(-1)^{\bar{x}\;\bar{y}}[[m,y],x]\;.\] 
For instance, 
if  $f\colon L\to M$ is a morphism of DG-Lie algebras, then $M$ is an $L$-module via the adjoint representation
$[m,x]=[m,f(x)]$.

We shall denote by $H^*(L)$ and $H^*(M)$ the cohomology of the DG-vector spaces $(L,d)$ and 
$(M,d)$, respectively.
For every integer $p\ge 0$ let's consider the DG-vector space
\[ CE(L,M)^{p,*}=\Hom^*_{\K}(L^{\wedge p},M),\]
equipped with the natural  differential $\bar{\delta}\colon CE(L,M)^{p,q}\to CE(L,M)^{p,q+1}$, namely \[ (\bar{\delta}\phi)(x_1,\ldots,x_p)=
d(\phi(x_1,\ldots,x_p))-\sum_{i=1}^p(-1)^{\bar{\phi}+\bar{x_1}+\cdots+\bar{x_{i-1}}}
\phi(x_1,\ldots, dx_i,\ldots,x_p)\;,\]
where every element of $CE(L,M)^{p,*}$ is interpreted as a $p$-linear graded skewsymmetric map
$L\times\cdots\times L\to M$. 
As usual we intend that $L^{\wedge 0}=\K$ and then $CE(L,M)^{0,*}=M$.

Following \cite[pag. 94]{Ja}, the Chevalley-Eilenberg complex of $L$ with coefficients in 
$M$  is the complex of DG-vector spaces:   
\[ CE(L,M):\quad 0\to CE(L,M)^{0,*}\xrightarrow{\delta}CE(L,M)^{1,*}\xrightarrow{\delta}
CE(L,M)^{2,*}\to\cdots\;,\]
i.e., the complex
\[ CE(L,M):\quad 0\to M\xrightarrow{\delta}\Hom_{\K}^*(L,M)\xrightarrow{\delta}\Hom_{\K}^*(L^{\wedge 2},M)\to
\cdots\]
where the differential $\delta$ is defined in the following way:
\begin{enumerate}
\item for  $m\in M$ we have $(\delta m)(x)=(-1)^{\bar{m}}[m,x]$;

\item  for $\phi\in \Hom_{\K}^*(L,M)$ we have 
\[(\delta\phi)(x,y)=(-1)^{\bar{\phi}+1}\left([\phi(x),y]-
(-1)^{\bar{x}\;\bar{y}}[\phi(y),x]-\phi([x,y])\right);\]

\item  for $p\ge 2$ and $\phi\in \Hom_{\K}^*(L^{\wedge p-1},M)$  we have:
\[ \begin{split}
&(\delta \phi)(x_1,\ldots,x_p)=\\
&=(-1)^{\bar{\phi}+p-1}\left(\!\sum_{i}\chi_i\, [\phi(x_1,\ldots,\widehat{x_i},\ldots,x_p),x_i]
-\!\sum_{i<j}\chi_{i,j}\, \phi(x_1,\ldots,\widehat{x_i},\ldots,\widehat{x_j},\ldots,x_p,[x_i,x_j])\right)
\end{split}\] 
where the  $\chi_i,\chi_{i,j}\in \{\pm 1\}$ are the antisymmetric Koszul signs; when 
$x_1\wedge\cdots\wedge x_p\not=0$ they are determined by the following equalities in 
$L^{\wedge p}$:
\[ x_1\wedge\cdots\wedge x_p=\chi_i\;
x_1\wedge\cdots\widehat{x_i}\cdots\wedge x_p\wedge x_i=\chi_{i,j}\;
 x_1\wedge\cdots\widehat{x_i}\cdots\widehat{x_j}\cdots\wedge x_p\wedge x_i\wedge x_j\;.\]
\end{enumerate}

The proof that  $\delta^2=0$ 
may be easily reduced to the case $M=L$: in fact, denoting by 
$K=L\oplus M$ the trivial extension of the DG-Lie algebra $L$ by the $L$-module $M$, there exists a 
natural  sequence of 
embeddings of   DG-vector spaces $\Hom_{\K}^*(L^{\wedge p},M)\subset \Hom_{\K}^*(K^{\wedge p},K)$ 
commuting with the operators $\delta$.  Now, under  the assumption $M=L$ the proof becomes 
tedious but completely straightforward, cf. also \cite{CalRos,CE,Pen}. Alternatively one can 
observe that when $\phi,x_1,\ldots,x_p$ have even degree the above definition of $\delta$ is, 
up to sign,  the same  
of \cite{Ja}; then one
can use the standard trick (cf. \cite{bandieraderived,voronov2}) of 
taking the tensor products of $L,M$ with a suitable Grassmann algebra in order to reduce the verification of 
$(\delta^2\phi)(x_1,\ldots,x_p)=0$ to the case where $\phi,x_1,\ldots,x_p$ have even degree.
A more conceptual description of the Chevalley-Eilenberg complex  will be given later as the und\'ecalage 
of the DG-vector space of coderivations of a differential symmetric coalgebra,  cf. also \cite{lazarev,Pen}.

Notice that for $\phi\in \Hom^*_{\K}(L,M)$, the condition $\delta\phi=0$ can be written as 
\[ \phi([x,y])=[\phi(x),y]-
(-1)^{\bar{x}\;\bar{y}}[\phi(y),x]=[\phi(x),y]+(-1)^{\bar{x}\;\bar{\phi}}[x,\phi(y)]\]
and then the kernel of $\Hom_{\K}^*(L,M)\xrightarrow{\delta}\Hom_{\K}^*(L^{\wedge 2},M)$ is the space of 
derivations $\phi\colon L\to M$. On the other side the image of 
$\delta\colon M\to \Hom_{\K}^*(L,M)$ is, by definition, the space of inner derivations and then 
\[ H^1(CE(L,M),\delta)=\frac{\{\text{derivations}\;  L\to M\}}{\{\text{inner derivations}\}}\;.\]

Similarly it is proved that $\bar{\delta}\delta+\delta \bar{\delta}=0$, giving  a double complex structure
$(CE(L,M),\delta,\bar{\delta})$.

\begin{definition}
The Chevalley-Eilenberg 
cohomology $H^*_{CE}(L,M)$ of the differential graded Lie algebra $L$ 
with coefficients in the $L$-module $M$ is the cohomology of the total complex 
$\operatorname{Tot}^{\prod}(CE(L,M),\delta,\bar{\delta})$.   
\end{definition}

In other words,  $H^*_{CE}(L,M)$ is the cohomology of the complex 
$\cdots\to A^i\xrightarrow{\delta+\bar{\delta}}A^{i+1}\to\cdots$, where
\[ A^n=\prod_{p+q=n}\Hom^{q}_{\K}(L^{\wedge p},M)\;.\]

The Chevalley-Eilenberg complex carries the 
natural, decreasing, exhaustive and complete filtration 
\[F^pCE(L,M)=\Hom^*_{\K}\left(\bigoplus_{i\ge p}{\bigwedge}^{i}L,M\right),\qquad p\ge 0\;.\] 
We shall denote by $(E(L,M)^{p,q}_r, d_r)$ the associated (Chevalley-Eilenberg) cohomology spectral sequence.

\begin{example} 
If $L,M$ have trivial differentials, then $H^*(L)=L$, 
$H^*(M)=M$ and therefore $E(L,M)^{p,q}_0=E(L,M)^{p,q}_1$. Moreover the 
spectral sequence degenerates  at $E_2$ (i.e., $d_r=0$ for every $r\ge 2$) and 
\[ H^i_{CE}(L,M)=\prod_{p\ge 0}E(L,M)^{p,i-p}_2,\]
where
\[E(L,M)^{p,i-p}_2=\frac{\ker(\Hom^{i-p}_{\K}(L^{\wedge p},M)\xrightarrow{\delta}
\Hom^{i-p}_{\K}(L^{\wedge p+1},M))}{\delta\Hom^{i-p}_{\K}(L^{\wedge p-1},M)}\;.\]
\end{example}

In general, since 
\[E(L,M)^{p,*}_0=\frac{F^pCE(L,M)}{F^{p+1}CE(L,M)}=\Hom^*_{\K}(L^{\wedge p},M)\] 
and the field $\K$ is assumed to be of characteristic $0$, we have $H^*(L^{\wedge p})=H^*(L)^{\wedge p}$ (see e.g. 
\cite[pag. 280]{Qui})
and then 
\begin{equation}\label{equ.e1term}
\begin{split}
E(L,M)^{p,q}_1&=H^q(\Hom^*_{\K}(L^{\wedge p},M),\bar{\delta})=
\Hom^q_{\K}(H^*(L)^{\wedge p},H^*(M))\\
&=E(H^*(L),H^*(M))^{p,q}_1\;.\end{split}
\end{equation}
 
The differential $d_1\colon E(L,M)_1^{p,q}\to E(L,M)_1^{p+1,q}$ depends only by the graded 
Lie algebra $H^*(L)$ and its module $H^*(M)$, giving 
\[   E(L,M)^{p,*}_2=E(H^*(L),H^*(M))^{p,*}_2=H^p(CE(H^*(L),H^*(M)),\delta)\]
and therefore  
\[   E(L,M)^{1,*}_2=E(H^*(L),H^*(M))^{1,*}_2=
\frac{\{\text{derivations}\;  H^*(L)\to H^*(M)\}}{\{\text{inner derivations}\}}\;.\]

\bigskip
\section{Statement of the main results}
\label{sec.mainresults}

For the clarity of exposition, we list here the main results proved in this paper; the proofs rely on the theory of $L_{\infty}[1]$-algebras and will be postponed in next sections.

\begin{definition}[Euler class]\label{def.eulerclass} The \emph{Euler class} of a
morphism of differential graded Lie algebras $f\colon L\to M$
is the element $e_f\in E(L,M)^{1,0}_2=E(H^*(L),H^*(M))^{1,0}_2$ corresponding 
to the Euler derivation  
\[ e_f\colon H^*(L)\to H^*(M),\qquad e_f(x)=\deg(x)\, f(x)\;.\]
The Euler class of a DG-Lie algebra $L$ is defined as the Euler class of the identity on $L$.
\end{definition}

\begin{lemma}\label{lem.degenerazioneparziale} 
Let $e\in E(L,L)^{1,0}_2$ be the Euler class of a differential graded Lie algebra $L$. If 
$d_2(e)=\cdots=d_{r-1}(e)=0$ for some $r>2$, then 
$d_k=0$ for every $2\le k<r$; in particular   
$E(L,L)^{p,q}_r=E(L,L)^{p,q}_2$.
\end{lemma}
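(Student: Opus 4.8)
The plan is to exploit the fact that the Euler class $e$ is not an arbitrary element of $E_2^{1,0}$ but is represented by a very special cocycle — the Euler derivation, which is essentially the grading operator acting on $H^*(L)$. The key observation I would pursue is that on the associated graded of the Chevalley-Eilenberg filtration, the differentials $d_r$ ($r\ge 1$) are derivations of the Nijenhuis–Richardson type bracket, and the Euler derivation acts on the whole spectral sequence as a kind of ``weight'' operator. Concretely, I would first show that contraction with (or the adjoint action of) the Euler derivation $e$ on each page $E_r^{p,q}$ is, up to a scalar depending only on $(p,q)$ and $r$, multiplication by an integer — this is where the d\'ecalage sign conventions mentioned in the excerpt enter, since the Euler derivation is designed to match the décalage shift. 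If that scalar is nonzero on $E_r^{p,q}$, then that graded piece is forced to behave in a controlled way under $d_r$.

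The heart of the argument should be a Cartan-type homotopy formula relating the bracket with $e$, the differential $d_r$, and the operator $d_r(e)$. Since $e$ is represented by a $d_1$-cocycle, it survives to $E_2$; the hypothesis is that it survives all the way to $E_r$ (all $d_k(e)=0$ for $k<r$), so $e$ defines a well-defined class on each page $E_k$ for $k\le r$. On each such page the adjoint action $[e,-]$ commutes with $d_k$ (because $d_k$ is a derivation of the bracket and $d_k(e)=0$). Now I would argue that $[e,-]$ acts on $E_k^{p,q}$ as multiplication by the scalar $c(p,q,k)$ computed in the first step, and — crucially — that this scalar is \emph{nonzero} whenever $p\ge 1$ and $d_k$ could be nonzero out of that bidegree, i.e. for $2\le k<r$. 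Since $d_k$ both commutes with $[e,-]$ and shifts bidegree from $(p,q)$ to $(p+k,q-k+1)$, and since the scalar $c(p,q,k)\ne c(p+k,q-k+1,k)$ in the relevant range, a standard eigenvalue argument forces $d_k=0$ on all of $E_k$. Iterating from $k=2$ up to $k=r-1$ then gives $E_r=E_2$.

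The main obstacle I anticipate is pinning down precisely the scalar $c(p,q,k)$ by which $[e,-]$ acts and verifying that it genuinely separates the bidegrees $(p,q)$ and $(p+k,q-k+1)$ for all $2\le k<r$ — this requires being careful that the Euler derivation really is the operator that measures the internal degree shift built into the décalage, so that the ``weight'' jumps by exactly the right amount under $d_k$. A secondary technical point is justifying that $[e,-]$ descends to an action on each page $E_k$ and commutes with $d_k$: this needs $d_k(e)=0$ together with the Leibniz rule for $d_k$ with respect to the induced bracket on $E_k$, which in turn should follow from the description of the Chevalley-Eilenberg complex as the undécalage of the coderivation complex together with the homotopy-invariance statements of Section~\ref{sec.homotopyinvariance}. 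Once these are in place, the eigenvalue/bidegree-separation argument is formal and short.
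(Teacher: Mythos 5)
Your argument is correct, but it is genuinely different from the one in the paper. The paper deduces Lemma~\ref{lem.degenerazioneparziale} from the $L_\infty[1]$ machinery: after d\'ecalage and passage to a minimal model, Lemma~\ref{lem.lemmachiave} shows that the vanishing of $d_{i-1}(e)$ allows one to write the first nonvanishing higher bracket $q_i$ as $[q_2,\alpha]_{NR}$ and to kill it by the gauge $L_\infty$-isomorphism $e^{\widehat{\alpha}}$; iterating until the first nonvanishing bracket has index $>r$, Lemma~\ref{lem.criteriodicollassamentoparziale} then yields $d_k=0$ for $2\le k<r$. You instead stay entirely inside the spectral sequence: the Nijenhuis--Richardson bracket makes $CE$ a filtered DG-Lie algebra with $[F^a,F^b]\subset F^{a+b-1}$, so every page is a bigraded Lie algebra on which $d_k$ is a derivation, and the computation $[\beta,e]_{NR}=(j-h-1)\beta$ for $\beta\in\Hom^h_{\K}(V^{\odot j},V)$ shows that $[-,e]$ acts on $E_k^{p,q}$ by a scalar depending only on $q$, which jumps by exactly $k-1\neq 0$ under $d_k$; hence $d_k(e)=0$ forces $(k-1)d_k=0$. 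Your route is more self-contained (no minimal model theorem, no $L_\infty$-isomorphisms) and proves the sharper page-by-page statement that $d_k(e)=0$ on $E_k$ alone kills $d_k$; what it costs is setting up the multiplicative structure of the spectral sequence of a filtered DG-Lie algebra, which the paper never makes explicit, whereas the paper's route recycles Lemma~\ref{lem.lemmachiave}, which is needed anyway for Theorem~\ref{thm.formalityminilamlLinfinity1}. The one point you must handle with care is the scalar claim itself: on a representative $\sum_{i\ge p}\alpha_i\in Z^{p,q}_k$ the operator $[-,e]_{NR}$ multiplies $\alpha_i$ by $i-(p+q)-1$, which varies with $i$, so $[-,e]$ is \emph{not} scalar on cocycle representatives; the uniform eigenvalue $-q-1$ holds only on the page $E_k^{p,q}$, and is obtained by propagating the $E_0$ computation through the successive subquotients $E_{k+1}=H(E_k,d_k)$ using that the bracket on each page is induced from the previous one.
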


Every morphism of differential graded Lie algebras $f\colon L\to M$ induces by composition 
two natural morphisms of  double complexes 
\[ \xymatrix{CE(L,L)\ar[r]^{f_*\;}&CE(L,M)&CE(M,M)\ar[l]_{\;f^*}}\]
and then also two morphisms of spectral sequences
\begin{equation}\label{equ.quasiisoateuno} 
\xymatrix{E(L,L)^{p,q}_r\ar[r]^{f_*\;}&E(L,M)^{p,q}_r&E(M,M)^{p,q}_r\ar[l]_{\;f^*}}\;.
\end{equation}
preserving Euler classes.
If $f$ is a quasi-isomorphism, then by \eqref{equ.e1term}  
the   maps in \eqref{equ.quasiisoateuno} are isomorphisms for  $r\ge 1$.  
Therefore, \emph{the truncation at $r\ge 1$ of the spectral sequence $E(L,L)^{p,q}_r$  and the Euler class
are homotopy invariants of  $L$}.

\begin{theorem}[Formality criterion]\label{thm.formalitycriteria} 
Let $(E(L,L)^{p,q}_r,d_r)$
be the Chevalley-Eilenberg spectral sequence of  a differential graded Lie algebra $L$. 
Then the following conditions are equivalent:

\begin{enumerate}

\item\label{it1.thm.formalitycriteria} $L$ is formal;

\item the spectral sequence $E(L,L)^{p,q}_r$ degenerates at $E_2$;

\item\label{it3.thm.formalitycriteria}  denoting by 
\[ e\in E(L,L)^{1,0}_2=\frac{\;\Der^0_{\K}(H^*(L),H^*(L))\;}{
\{[x,-]\mid x\in H^0(L)\}},\qquad e(x)=\deg(x)\cdot x,\]
the Euler class of $L$, we have $d_r(e)=0\in E(L,L)^{r+1,1-r}_r$ for every $r\ge 2$;
\end{enumerate}
\end{theorem}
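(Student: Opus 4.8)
The plan is to deduce Theorem~\ref{thm.formalitycriteria} from the more general $L_\infty[1]$-theory developed in Section~\ref{sec.criteriaLinfinity}, after translating the DG-Lie data into that setting via d\'ecalage. Recall that a DG-Lie algebra $L$ becomes an $L_\infty[1]$-algebra on the shifted space $L[1]$, and that formality of $L$ as a DG-Lie algebra is equivalent to formality of $L[1]$ as an $L_\infty[1]$-algebra; moreover, by the remark preceding the theorem, the truncated spectral sequence $(E(L,L)^{p,q}_r,d_r)$ for $r\ge 1$ and the Euler class $e$ are homotopy invariants, so all three conditions are invariant under quasi-isomorphism and it suffices to prove the equivalences for a fixed minimal model. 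The implications $(1)\Rightarrow(2)\Rightarrow(3)$ are the easy direction: if $L$ is formal, then a quasi-isomorphism to its cohomology $H^*(L)$ (with zero differential) transports the spectral sequence to that of $H^*(L)$, which by the Example degenerates at $E_2$; and trivially degeneration at $E_2$ forces $d_r(e)=0$ for all $r\ge 2$.

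The substance is $(3)\Rightarrow(1)$. First I would invoke Lemma~\ref{lem.degenerazioneparziale}: the hypothesis $d_r(e)=0$ for all $r\ge 2$ bootstraps — $d_2(e)=0$ gives $d_2=0$, then $d_3(e)=0$ is meaningful and gives $d_3=0$, and inductively $d_r=0$ for every $r\ge 2$ — so in fact condition $(3)$ already implies condition $(2)$, and it remains only to show $(2)\Rightarrow(1)$. So the real task is: \emph{degeneration of the Chevalley-Eilenberg spectral sequence at $E_2$ implies formality}. Here I would pass to a minimal $L_\infty[1]$-model $(V,\{q_k\}_{k\ge 2})$ of $L[1]$, so that $V = H^*(L)[1]$ with $q_1=0$. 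The Chevalley-Eilenberg differential on $CE(H^*(L),H^*(L))$ is, up to d\'ecalage, the Nijenhuis-Richardson bracket with the codifferential $Q=\sum q_k$; the filtration by arity realizes $E_r$ as the usual spectral sequence of this filtered complex, with $d_1$ the bracket with $q_2$ and higher $d_r$ encoding the obstructions coming from $q_3,q_4,\dots$. Degeneration at $E_2$ should be shown to permit an inductive change of coordinates (an $L_\infty[1]$-automorphism of the minimal model, i.e.\ a sequence of gauge transformations) killing $q_3$, then $q_4$, etc.: at stage $k$, the obstruction to removing $q_k$ by a degree-adjusting automorphism is precisely a $d_{k-1}$-type differential applied to a class built from the Euler/coordinate data, and vanishing of $d_{k-1}$ (a consequence of $E_2$-degeneration) makes that obstruction exact, hence removable. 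The upshot is that $L[1]$ is $L_\infty[1]$-isomorphic to $(H^*(L)[1], q_2)$, i.e.\ to the cohomology DG-Lie algebra, which is formality.

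The main obstacle I anticipate is the inductive removal step: one must set up the hierarchy of obstructions carefully so that the obstruction to eliminating $q_k$ lands in exactly the group $E(L,L)^{k,2-k}_{k-1}$ (or the appropriate slot), is represented by a cocycle canonically attached to the partially-normalized structure, and is hit by $d_{k-1}$ applied to an element one is free to choose — and then check that the freedom in that choice does not propagate uncontrollably to later stages. This is precisely the kind of bookkeeping that Kaledin's and Bandiera's arguments handle in the associative and homotopy-abelian cases respectively, and the expected strategy is to mimic their telescoping construction, with the Euler derivation playing the role that the grading operator plays there: contracting with $e$ shifts arity, so that the recursion ``$d_{k-1}(e)=0$'' is exactly the statement that the $k$-th obstruction is a coboundary. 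Controlling signs through the d\'ecalage (which is why the sign conventions were fixed teleologically in Section~\ref{sec.CEspectral}) will be the other delicate point, but it is routine once the conventions are in place. I would therefore state and prove the $L_\infty[1]$-version first, with the full inductive normalization, and then obtain Theorem~\ref{thm.formalitycriteria} as the specialization to DG-Lie algebras.
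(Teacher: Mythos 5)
Your plan is correct and follows essentially the same route as the paper: reduce via d\'ecalage and homotopy invariance of the spectral sequence to the minimal $L_\infty[1]$-model, where the condition $d_{i-1}(e)=0$ yields (as in Lemma~\ref{lem.lemmachiave}) an $\alpha$ with $q_i=[q_2,\alpha]_{NR}$, and conjugation by $e^{\widehat{\alpha}}$ kills $q_i$ while fixing lower arities, so the infinite composition of these automorphisms converges and gives formality. This is precisely the chain Lemma~\ref{lem.lemmachiave} $\Rightarrow$ Theorem~\ref{thm.formalityminilamlLinfinity1} $\Rightarrow$ Corollary~\ref{cor.formalityLinfinity} $\Rightarrow$ Theorem~\ref{thm.formalitycriteria} in the text.
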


According to Lemma~\ref{lem.degenerazioneparziale} the above item \eqref{it3.thm.formalitycriteria} 
makes sense. By the above considerations about the homotopy invariance of the Chevalley-Eilenberg spectral sequence and Euler classes,  the only non trivial implication  is 
$(\ref{it3.thm.formalitycriteria}\Rightarrow \ref{it1.thm.formalitycriteria})$.

\begin{theorem}[Formality transfer]\label{thm.formalitytransfer} 
Let $f\colon L\to M$ be a morphism of 
differential graded Lie algebras. Assume that 
\begin{enumerate}

\item\label{it1.thm.formalitytransfer}  $M$ is formal;

\item\label{it2.thm.formalitytransfer}   for every $p\ge 3$ the map 
\[ f\colon E(H^*(L),H^*(L))^{p,2-p}_2\to E(H^*(L),H^*(M))^{p,2-p}_2\]
is injective.
\end{enumerate}
Then also $L$ is formal.
\end{theorem}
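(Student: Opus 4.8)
The plan is to deduce Theorem~\ref{thm.formalitytransfer} from the Formality criterion (Theorem~\ref{thm.formalitycriteria}), specifically from the equivalence $(\ref{it1.thm.formalitycriteria})\Leftrightarrow(\ref{it3.thm.formalitycriteria})$. Concretely, one must show that the Euler class $e\in E(L,L)^{1,0}_2$ satisfies $d_r(e)=0$ for every $r\ge 2$. The key observation is that the morphism $f$ induces a morphism of spectral sequences $f_*\colon E(L,L)^{p,q}_r\to E(L,M)^{p,q}_r$ which, by the discussion following \eqref{equ.quasiisoateuno}, preserves Euler classes and which, on $E_2$, coincides with the map on $E(H^*(L),H^*(L))^{p,q}_2\to E(H^*(L),H^*(M))^{p,q}_2$ appearing in hypothesis~\eqref{it2.thm.formalitytransfer}. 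Since $M$ is formal, Theorem~\ref{thm.formalitycriteria} applied to $M$ gives that $E(M,M)^{p,q}_r$ degenerates at $E_2$; pushing forward via $f^*\colon E(M,M)\to E(L,M)$ one learns that the image of the Euler class $e_M$ is $d_r$-closed in $E(L,M)$, but more importantly one wants a statement about $E(L,M)$ itself rather than about the image of $E(M,M)$.

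The heart of the argument is an induction on $r$ showing simultaneously that $d_2(e)=\cdots=d_r(e)=0$ in $E(L,L)$. By Lemma~\ref{lem.degenerazioneparziale}, once we know $d_2(e)=\cdots=d_{r-1}(e)=0$ we have $E(L,L)^{p,q}_r=E(L,L)^{p,q}_2$ and similarly (applying the lemma, or rather the analog of the considerations behind it, to the relevant object) the differential $d_r(e)$ lives in $E(L,L)^{r+1,1-r}_r\cong E(H^*(L),H^*(L))^{r+1,1-r}_2$; note that $r+1\ge 3$, which is exactly the range in which hypothesis~\eqref{it2.thm.formalitytransfer} provides injectivity of $f$. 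So it suffices to show $f_*d_r(e)=0$ in $E(L,M)^{r+1,1-r}_r$. By naturality of the spectral sequence differentials, $f_*d_r(e)=d_r(f_*e)$, and $f_*e$ is the Euler class of the composite data, which coincides with $f^*e_M$ where $e_M\in E(M,M)^{1,0}_2$ is the Euler class of $M$ (both equal the class of the derivation $x\mapsto \deg(x)f(x)$ on the $E_2$-page, via the factorization $f_*e=f_*(\text{Euler of }\Id_L)=f^*(\text{Euler of }\Id_M)$ using that $f$ commutes with the Euler derivations). Since $M$ is formal, $d_r(e_M)=0$ in $E(M,M)$ by Theorem~\ref{thm.formalitycriteria}, hence $d_r(f^*e_M)=f^*d_r(e_M)=0$ in $E(M,M)^{r+1,1-r}_r$, and therefore $d_r(f_*e)=0$ in $E(L,M)$. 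Injectivity of $f$ on the relevant $E_2$-group (identified with $E_r$ by the inductive hypothesis and Lemma~\ref{lem.degenerazioneparziale}) then forces $d_r(e)=0$ in $E(L,L)$, completing the induction.

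Having established $d_r(e)=0$ for all $r\ge 2$, Theorem~\ref{thm.formalitycriteria} immediately yields that $L$ is formal, which is the desired conclusion.

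The step I expect to be the main obstacle is the careful identification, on each page $E_r$, of $f_*(e)$ with $f^*(e_M)$ and the verification that the target group $E(L,L)^{r+1,1-r}_r$ is genuinely identified with $E(H^*(L),H^*(L))^{r+1,1-r}_2$ so that hypothesis~\eqref{it2.thm.formalitytransfer} applies: this requires knowing that $d_2=\cdots=d_{r-1}=0$ on all of $E(L,L)$ (not just on $e$), which is where Lemma~\ref{lem.degenerazioneparziale} is essential and must be invoked at each inductive step, and similarly one needs the analogous degeneration statement for the mixed spectral sequence $E(L,M)$ in order to make sense of $d_r$ on it in the claimed bidegree. Keeping track of these identifications, together with the sign conventions relating $f_*$ and $f^*$ through $E(L,M)$, is the bookkeeping-heavy part of the proof; the conceptual content is entirely contained in the naturality of the spectral sequence and in Theorem~\ref{thm.formalitycriteria}.
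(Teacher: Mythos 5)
Your overall strategy is the same as the paper's: the paper's proof (of the $L_\infty[1]$-version, Theorem~\ref{thm.formalitytransferinfinity}) also runs on the identity $f_*(e_L)=e_f=f^*(e_M)$, uses formality of $M$ to kill $d_i(e_M)$, and uses the injectivity hypothesis to pull back the vanishing to $d_i(e_L)$, before invoking the gauge argument of Theorem~\ref{thm.formalityminilamlLinfinity1}. So the conceptual content is right, and routing the conclusion through the already-proved criterion Theorem~\ref{thm.formalitycriteria} instead of re-running the gauge argument is a legitimate simplification.

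However, there is one genuine gap at the decisive step, and it is not mere bookkeeping. You need to deduce $d_r(e_L)=0$ from $f_*(d_r(e_L))=0$, where $d_r(e_L)\in E(L,L)^{r+1,1-r}_r$ and $f_*(d_r(e_L))\in E(L,M)^{r+1,1-r}_r$. Lemma~\ref{lem.degenerazioneparziale} gives you $E(L,L)^{r+1,1-r}_r=E(L,L)^{r+1,1-r}_2$, but the hypothesis~\eqref{it2.thm.formalitytransfer} is injectivity of $f_*$ into $E(L,M)^{r+1,1-r}_2$, and $E(L,M)^{r+1,1-r}_r$ is a \emph{subquotient} of $E(L,M)^{r+1,1-r}_2$: a class in $E(L,L)_2^{r+1,1-r}$ with nonzero image in $E(L,M)_2^{r+1,1-r}$ could still die in $E(L,M)_r^{r+1,1-r}$ if its image is hit by one of the differentials $d_j\colon E(L,M)_j^{r+1-j,\,j-r}\to E(L,M)_j^{r+1,1-r}$ for $2\le j<r$. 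So you must prove that these differentials on the \emph{mixed} spectral sequence vanish, i.e.\ that $E(L,M)_r=E(L,M)_2$ under your inductive hypothesis. This does not follow from naturality, nor from Lemma~\ref{lem.degenerazioneparziale} (which concerns $E(L,L)$ only), nor from degeneration of $E(M,M)$, since $E(L,M)$ is not generated by the images of $f_*$ and $f^*$. The paper obtains it from Lemma~\ref{lem.criteriodicollassamentoparziale}: one first replaces $L$ by a minimal $L_\infty[1]$-model normalized (via the gauge transformations $e^{\widehat\alpha}$) so that $q_3=\cdots=q_{k-1}=0$ and replaces $M$ by a purely quadratic model, and only then does the explicit filtration-degree count give $E(V,W;f)_2=E(V,W;f)_{k-1}$. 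You flag exactly this point in your last paragraph but treat it as routine; filling it requires the minimal-model normalization, which is the actual technical core of the paper's argument.
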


As we have already pointed out, the above Item~\eqref{it2.thm.formalitytransfer} holds whenever 
the natural map 
\[ f\colon H^2_{CE}(H^*(L),H^*(L))\to H^2_{CE}(H^*(L),H^*(M))\]
is injective.

\begin{corollary}
Let $L,M$ be a differential graded Lie algebra. Then $L\times M$ is formal if and only if both $L$ and $M$ are formal.
\end{corollary}

\begin{proof}  Immediate consequence of Theorem~\ref{thm.formalitytransfer}, since 
$L$ (resp.: $M$) is a direct summand of the $L$-module (resp.: $M$-module) $L\times M$.
\end{proof}

The next corollary in the Lie analog of a remarkable result by Sullivan, Halperin and Stasheff 
\cite[Cor. 6.9]{halsta}.

\begin{corollary}
Let $L$ be a differential graded Lie algebra and let $A$ be a unitary differential graded commutative $\K$-algebra. 
If $H^*(A)\not=0$ and 
$L\otimes A$ is formal, then also $L$ is formal.
\end{corollary}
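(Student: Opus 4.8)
The plan is to deduce this corollary from the Formality transfer theorem (Theorem~\ref{thm.formalitytransfer}) applied to the natural inclusion $f\colon L\to L\otimes A$, $x\mapsto x\otimes 1$, which is a morphism of DG-Lie algebras since $A$ is unitary and commutative. Hypothesis~\eqref{it1.thm.formalitytransfer} is given. So the whole task reduces to verifying the injectivity condition~\eqref{it2.thm.formalitytransfer}, i.e. that for every $p\ge 3$ the induced map
\[ f\colon E(H^*(L),H^*(L))^{p,2-p}_2\to E(H^*(L),H^*(L\otimes A))^{p,2-p}_2\]
is injective, and in fact it suffices (by the remark following the theorem) to show injectivity of $f\colon H^2_{CE}(H^*(L),H^*(L))\to H^2_{CE}(H^*(L),H^*(L\otimes A))$.

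First I would record the elementary facts: since $\K\hookrightarrow A$ splits as a map of $\K$-vector spaces (choose a linear complement $A=\K\oplus A'$), one gets $H^*(L\otimes A)=H^*(L)\otimes A$ as a graded Lie algebra, and the map $f$ on cohomology is $x\mapsto x\otimes 1$, which is a split injection of $H^*(L)$-modules with retraction induced by any $\K$-linear projection $A\to\K$. Then I would observe that the Chevalley-Eilenberg complex is additive and compatible with flat (here, free) coefficient extensions in the module slot: writing $\mathfrak{g}=H^*(L)$, we have
\[ CE(\mathfrak{g},\mathfrak{g}\otimes A)^{p,*}=\Hom^*_{\K}(\mathfrak{g}^{\wedge p},\mathfrak{g}\otimes A),\]
and the retraction $A\to\K$ induces a $\K$-linear retraction of complexes $CE(\mathfrak{g},\mathfrak{g}\otimes A)\to CE(\mathfrak{g},\mathfrak{g})$ splitting $f_*$, hence $f_*$ is a split injection of (double) complexes. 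Passing to the $E_2$-pages — equivalently to the Chevalley-Eilenberg cohomology of the graded Lie algebra $\mathfrak{g}$ with the stated coefficients, as identified in Section~\ref{sec.CEspectral} — a split injection of complexes induces an injection on cohomology, so $f\colon E(\mathfrak{g},\mathfrak{g})^{p,q}_2\to E(\mathfrak{g},\mathfrak{g}\otimes A)^{p,q}_2$ is injective for all $p,q$, and in particular for $p\ge 3$, $q=2-p$. This is exactly hypothesis~\eqref{it2.thm.formalitytransfer}, and Theorem~\ref{thm.formalitytransfer} then yields formality of $L$.

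The main subtlety — really the only place one must be slightly careful — is the naturality of these identifications: one needs that the retraction $A\to\K$ induces a genuine morphism of the double complexes $(CE,\delta,\bar\delta)$, not merely of the underlying graded objects. Both differentials $\delta$ and $\bar\delta$ are built from the bracket $M\otimes\mathfrak{g}\to M$ and (for $\bar\delta$) from the internal differential, which here is zero since we work at the level of cohomology; the bracket on $\mathfrak{g}\otimes A=H^*(L\otimes A)$ is $[x\otimes a,y\otimes b]=[x,y]\otimes ab$, and the projection $A\to\K$ is an algebra retraction of $\K\hookrightarrow A$, so it is a map of $\mathfrak{g}$-modules $\mathfrak{g}\otimes A\to\mathfrak{g}$; therefore it commutes with $\delta$. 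Once this is in place everything is formal, and one obtains the stronger statement that $f$ is injective on the full $E_2$-page, not just in the range needed. I would also remark that the same argument (applied verbatim) recovers the classical fact that $L\otimes A$ formal implies $L$ formal without needing $A$ finite-dimensional, so the corollary is genuinely the Lie-theoretic analog of the Halperin–Stasheff result alluded to above.
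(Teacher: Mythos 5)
Your proof is correct and is essentially the paper's own argument, which reads in full: ``Immediate consequence of Theorem~\ref{thm.formalitytransfer}, since $L$ is a direct summand of the $L$-module $M=L\otimes A$''; you have simply unwound why being a direct summand gives the required injectivity on the $E_2$-page. One small inaccuracy: a $\K$-linear projection $A\to\K$ need not be an algebra retraction (such a retraction may not even exist, e.g.\ $A=\C$ over $\K=\R$), but your argument does not need this --- the $L$-module structure on $L\otimes A$ induced by $f(x)=x\otimes 1$ is $[\ell\otimes a,x]=[\ell,x]\otimes a$, so any $\K$-linear retraction fixing $1$ is already a map of $L$-modules (equivalently of $H^*(L)$-modules) and hence splits $f_*$ as a map of Chevalley--Eilenberg complexes.
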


\begin{proof}  Let's denote by $d\colon A^i\to A^{i+1}$ the differential of $A$, then $d(1)=0$ and the assumption $H^*(A)\not=0$ implies that the cohomology class of $1$ is non trivial in $H^0(A)$: in fact, if $1=da$ for some $a\in A^{-1}$, then for every $b\in A$ such that $d(b)=0$ we have 
$d(ab)=b$.
Thus the morphism $\K\to A$, $\alpha\mapsto \alpha 1$, is injective in cohomology and therefore there exists a direct sum decomposition $A=\K\oplus B$ with 
$d(B)\subseteq B$.

Now the proof follows from Theorem~\ref{thm.formalitytransfer}, since 
$L$ is a direct summand of the $L$-module $L\otimes A=L\oplus (L\otimes B)$.
\end{proof}

\begin{definition}[\cite{hinich}] 
A graded Lie algebra $H$ is \emph{intrinsically formal} if every differential graded Lie algebra $L$ such that  
$H^*(L)\cong H$ is formal.
\end{definition} 

Putting $M=0$ in Theorem~\ref{thm.formalitytransfer} we recover the well known fact \cite{hinich,kaledin} that a graded Lie algebra $L$ with $E(L,L)^{p,2-p}_2=0$ for every $p\ge 3$ is intrinsically formal. After Theorem~\ref{thm.formalitycriteria},
another sufficient condition for intrinsic formality is given by the vanishing of the Euler class.

\begin{corollary} 
For every graded Lie algebra $M$ and every $h\in M^0$ the graded Lie subalgebra
\[ H=\{ x\in M\mid [h,x]=\deg(x)\; x\}\]
is intrinsically formal.  
\end{corollary}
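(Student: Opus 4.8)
The plan is to deduce this corollary from Theorem~\ref{thm.formalitytransfer} by exhibiting, for the graded Lie algebra $H=\{x\in M\mid [h,x]=\deg(x)\,x\}$, a morphism of DG-Lie algebras whose target is formal (indeed has trivial differential) and which is injective on the relevant Chevalley-Eilenberg pages. Since $M$ is an honest graded Lie algebra, it is formal with trivial differential, so the natural inclusion $H\hookrightarrow M$ is a candidate; the content is to check Item~\eqref{it2.thm.formalitytransfer}, i.e.\ injectivity of $H\to M$ induces injectivity of $E(H,H)^{p,2-p}_2\to E(H,M)^{p,2-p}_2$ for every $p\ge 3$. Because everything here has trivial differential, by the Example following the definition of the spectral sequence these $E_2$-terms are just the ordinary Chevalley-Eilenberg cohomology groups $H^p(CE(H,H),\delta)$ and $H^p(CE(H,M),\delta)$ in the appropriate internal degree.

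First I would make the role of $h$ explicit: the element $h\in M^0$ defines, via $\mathrm{ad}_h=[h,-]$, a degree-zero derivation of $M$, and $H$ is precisely the subspace on which $\mathrm{ad}_h$ acts as the Euler derivation $x\mapsto \deg(x)\,x$. The key observation is that $\mathrm{ad}_h$ is semisimple (diagonalizable) on each graded piece $M^n$ is not automatic, but one does not need that: what one needs is that the eigenspace decomposition with respect to $\mathrm{ad}_h-\deg$ provides a splitting. Concretely, I would argue that the operator $N=\mathrm{ad}_h$ acts on the Chevalley-Eilenberg complex $CE(M,M)$ compatibly with $\delta$, and that $H$, together with the ``weight'' grading coming from $h$, makes $CE(H,H)$ a direct summand of $CE(M,M)$ as a complex. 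Since a direct summand inclusion of complexes is injective on cohomology in every degree, Item~\eqref{it2.thm.formalitytransfer} follows. So the heart of the matter is a weight-space / eigenspace argument for the action of $h$.

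More precisely, here is the mechanism I expect to use. The adjoint action of $h$ extends to $CE(M,M)^{p,*}=\Hom^*_\K(M^{\wedge p},M)$ by the formula $(N\phi)(x_1,\dots,x_p)=[h,\phi(x_1,\dots,x_p)]-\sum_i \phi(x_1,\dots,[h,x_i],\dots,x_p)$; one checks $N$ commutes with $\delta$ (this is formal from the Jacobi identity, or from the fact that $N$ is the Lie derivative along the inner derivation $\mathrm{ad}_h$, which is a derivation of the whole Chevalley-Eilenberg structure). On $\Hom^q_\K(H^{\wedge p},H)$ the operator $N$ acts as multiplication by $(\deg\text{ of the target})-(\text{sum of degrees of the }x_i)$; restricting to the piece of internal degree $q$ with $p$ arguments, the Euler-class bookkeeping shows $N$ acts there as a fixed scalar depending only on $p$ and $q$ — and crucially, when the source lands in $H$ and the target lands in $H$, the scalar works out so that $CE(H,H)$ sits inside the $0$-generalized-eigenspace, while a complementary $N$-invariant complement (the sum of nonzero generalized eigenspaces, plus the part of $CE(M,M)$ involving the complement of $H$ in $M$) is also $\delta$-stable. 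Thus $CE(H,H)\hookrightarrow CE(H,M)\hookrightarrow CE(M,M)$ realizes the smaller complex as a $\delta$-stable direct summand, giving the required injectivity on all cohomology, hence on all $E_2$-terms, hence in particular on the $(p,2-p)$ terms for $p\ge 3$.

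The main obstacle I anticipate is the bookkeeping needed to verify that the weight of $h$ on $CE(H,H)^{p,q}$ is genuinely a single nonzero-avoiding scalar and that the chosen complement is both $N$-invariant and $\delta$-invariant — in other words, pinning down exactly why $H$ (the eigenspace for the Euler weight) yields a subcomplex that is complemented rather than merely a subcomplex. This is essentially the assertion that $CE(H,H)$ is the summand of $CE(M,M)$ on which the ``shifted'' adjoint action of $h$ vanishes, and I would phrase it by introducing the derivation $D=N-E$ where $E$ is the Euler operator on $CE(M,M)$ coming from the grading, noting $D$ is a $\delta$-commuting locally finite operator whose kernel-summand is exactly $CE(H,H)$; the generalized eigenspace decomposition of $D$ then does all the work. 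Once that is set up, invoking Theorem~\ref{thm.formalitytransfer} with $f\colon H\hookrightarrow M$ is immediate, and intrinsic formality of $H$ follows because any DG-Lie algebra $L$ with $H^*(L)\cong H$ admits $f$ on cohomology, which is what the $E_2$-level hypothesis of that theorem actually sees.
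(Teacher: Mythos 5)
There is a genuine gap at the very last step, and it is fatal to the route you chose. Theorem~\ref{thm.formalitytransfer} requires an actual morphism of \emph{differential graded} Lie algebras $f\colon L\to M$; its hypothesis \eqref{it2.thm.formalitytransfer} is indeed a condition only on cohomology, but the conclusion is proved by pushing and pulling Euler classes along $f$ itself (via $f_*(e_L)=e_f=f^*(e_M)$ and the comparison of the three spectral sequences), so the existence of $f$ cannot be dispensed with. For an arbitrary DG-Lie algebra $L$ with $H^*(L)\cong H$ there is no given DG-Lie (or even $L_\infty$-) morphism $L\to M$ extending the inclusion $H\hookrightarrow M$ on cohomology --- constructing one already amounts to killing the higher brackets of the minimal model of $L$, i.e.\ it is essentially the formality statement you are trying to prove. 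Your closing sentence (``admits $f$ on cohomology, which is what the $E_2$-level hypothesis of that theorem actually sees'') conflates the hypothesis with the data the theorem needs. A secondary, repairable-looking but unaddressed issue is that your splitting of $CE(H,M)$ needs $\operatorname{ad}_h-\deg$ to be locally finite on $M$ (so that $M$ decomposes into generalized eigenspaces and $H=\ker(\operatorname{ad}_h-\deg)$ is a complemented $H$-submodule); nothing in the hypotheses guarantees this, and even granting local finiteness the honest kernel $H$ need not split off from the generalized $0$-eigenspace as an $H$-module.

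The paper's proof avoids all of this by using the Euler-class criterion, Theorem~\ref{thm.formalitycriteria}\eqref{it3.thm.formalitycriteria}, which is a condition intrinsic to the graded Lie algebra $H$ and requires no map out of $L$: since $[h,h]=0=\deg(h)\,h$ one has $h\in H^0$, so the Euler derivation $e(x)=\deg(x)\,x=[h,x]$ of $H$ is \emph{inner}; hence the Euler class is already zero in $E(L,L)^{1,0}_2=\Der^0_{\K}(H,H)/\{[x,-]\mid x\in H^0\}$, all $d_r(e)$ vanish trivially, and $L$ is formal. If you want to keep $M$ in the picture, the correct role of $M$ here is only to supply the element $h$; once you observe $h\in H$, the ambient algebra $M$ can be discarded entirely.
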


\begin{proof} Notice first that $h\in H^0$ and then the Euler derivation $e=[h,-]\colon H\to H$ is an inner derivation.
Let $L$ be a differential graded Lie algebra with $H^*(L)=H$, then 
\[E(L,L)^{1,0}_2=E(H,H)^{1,0}_2=\frac{\;\Der^0_{\K}(H,H)\;}{
\{[x,-]\mid x\in H^0\}},\]
and therefore the Euler class  is trivial in $E(L,L)^{1,0}_2$.
\end{proof}

\begin{example} For every graded vector space $V$, the graded Lie algebras 
$\Hom^*_{\K}(V,V)$, $\Hom^{\ge 0}_{\K}(V,V)$ and $\Hom^{\le 0}_{\K}(V,V)$   
are intrinsically formal. In fact, denoting by 
\[ h\in \Hom^0_{\K}(V,V), \qquad h(v)=\deg(v)\, v,\]
we have 
\[ [h,f]=e(f)= \deg(f)\, f,\qquad \text{for every}\quad f\in \Hom^*_{\K}(V,V)\;.\]
\end{example}

\begin{example} For every graded commutative algebra $A$, the graded Lie algebra $\Der^*_{\K}(A,A)$ is intrinsically formal. In fact, denoting by 
\[ h\in \Der^0_{\K}(A,A), \qquad h(v)=\deg(v)\, v,\]
we have 
\[ [h,f]=e(f)= \deg(f)\, f,\qquad \text{for every}\quad f\in \Der^*_{\K}(A,A)\;.\]
The same conclusion applies to every graded Lie subalgebra of $\Hom^*_{\K}(A,A)$ containing the derivation 
$h$, e.g. the algebra of differential operators.
\end{example}

\bigskip
\section{Review of $L_{\infty}[1]$-algebras and Nijenhuis-Richardson bracket}
\label{sec.review}

Given a graded vector space $V$, the  symmetric 
coalgebra generated by $V$ is the graded vector space  
$S^{c}(V)=\oplus_{n\ge 0}V^{\odot n}$  equipped with the  coproduct 
\[ \Delta(1)=1\otimes 1,\qquad \Delta(v)=1\otimes v+v\otimes 1,\]
and more generally
\[ \Delta\colon S^{c}(V)\to S^{c}(V)\otimes S^{c}(V),\]
\[ \Delta(v_1\odot\cdots\odot v_n)=
\sum_{k=0}^{n}\sum_{\sigma\in S(n,n-k)}\epsilon(\sigma) 
(v_{\sigma(1)}\odot\cdots\odot  v_{\sigma(k)})\otimes 
(v_{\sigma(k+1)}\odot\cdots\odot  v_{\sigma(n)})\]
where $S(k,n-k)$ is the set of permutations of $\{1,\ldots,n\}$ such that 
\[ \sigma(1)<\cdots<\sigma(k),\qquad \sigma(k+1)<\cdots<\sigma(n),\]
and $\epsilon(\sigma)$ is the Koszul sign.
For every positive integer $m$ the subspace
$\bigoplus_{0\le n\le m}V^{\odot n}$ is 
a graded subcoalgebra of 
$S^c(V)$.

Throughout all this paper we shall use in force the following notation: whenever $i,j\ge 0$ and
$f\colon S^c(V)\to S^c(W)$ is a linear map, we shall denote by 
$f^i_j\colon V^{\odot j}\to W^{\odot i}$ the composite map 
\[ V^{\odot j}\xrightarrow{\text{inclusion}}
S^c(V)\xrightarrow{\; f\;} S^c(W)\xrightarrow{\text{projection}}W^{\odot i}\;.\]
The composition of $f$ with the projection $S^c(W)\to W$ is called the \emph{corestriction of $f$}; equivalently the corestriction of $f$ is  the linear map $\sum_{j\ge 0} f^1_j$.

The projection $S^c(V)\to V^{\odot 0}=\K$ is a counity, while the inclusion $\K=V^{\odot 0}\to S^c(V)$ is an augmentation. 
With a little abuse of language, \emph{by a morphism $f\colon S^c(V)\to S^c(W)$ of symmetric coalgebras we shall mean a morphism of graded augmented coalgebras}, i.e., a morphism of graded coalgebras such that $f(1)=1$. 
In practice the assumption $f(1)=1$ is equivalent to the fact that $f$ in non trivial:  
it is an easy exercise to show that, given a morphism of graded coalgebras 
$f\colon S^c(V)\to S^c(W)$, then either $f=0$ or $f(1)=1$.

The following propositions are well known (see e.g. \cite{LadaMarkl,ManRendiconti}) and, in any case, easy to prove.

\begin{proposition} Every  morphism of symmetric  coalgebras 
$f\colon S^c(V)\to S^c(W)$ is uniquely determined by its corestriction, i.e., $f$ is uniquely determined by the components $f^1_j$, $j>0$. Moreover, 
for every $n>0$ and
$v_1,\ldots,v_n\in V$ we have 
\[ f(v_1\odot\cdots\odot v_n)-f_1^1(v_1)\odot\cdots\odot f_1^1(v_n)\in  
\bigoplus_{0<i<n}W^{\odot i}\;.\]
In particular $f$ is an isomorphism if and only if $f^1_1$ is an isomorphism.
\end{proposition}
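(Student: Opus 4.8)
The plan is to deduce everything from the comultiplicativity $\Delta_W\circ f=(f\otimes f)\circ\Delta_V$ and the normalization $f(1)=1$, exploiting that $\K$ has characteristic $0$. First I would introduce, for each $n\ge 1$, the iterated coproduct $\Delta^{(n-1)}\colon S^c(V)\to S^c(V)^{\otimes n}$ (well defined by coassociativity) and the symmetrization map $s_n\colon W^{\odot n}\to W^{\otimes n}$, $w_1\odot\cdots\odot w_n\mapsto\sum_{\sigma\in S_n}\epsilon(\sigma)\,w_{\sigma(1)}\otimes\cdots\otimes w_{\sigma(n)}$. From the explicit formula for $\Delta$ one checks that the composite $\pi_W^{\otimes n}\circ\Delta_W^{(n-1)}\colon S^c(W)\to W^{\otimes n}$ kills $W^{\odot m}$ for $m\ne n$ and restricts to $s_n$ on $W^{\odot n}$; and $s_n$ is injective because in characteristic $0$ it is split by $\tfrac1{n!}$ times the canonical projection $W^{\otimes n}\to W^{\odot n}$. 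Precomposing the comultiplicativity identity $\Delta_W^{(n-1)}\circ f=f^{\otimes n}\circ\Delta_V^{(n-1)}$ with $\pi_W^{\otimes n}$ and using that $\pi_W\circ f=\sum_j f^1_j$ is the corestriction, one obtains
\[ s_n\circ\Bigl(\textstyle\sum_j f^n_j\Bigr)=\Bigl(\textstyle\sum_j f^1_j\Bigr)^{\otimes n}\circ\Delta_V^{(n-1)}. \]
Since $s_n$ is injective this expresses each $f^n_j$ as a universal (Koszul-sign-corrected) expression in the corestriction components $f^1_k$, which proves the first assertion.

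For the displayed identity I would evaluate the last formula on $\xi=v_1\odot\cdots\odot v_n$. Because $f(1)=1$ has no component in $W^{\odot 1}$ we have $f^1_0=0$, so in $\bigl(\sum_j f^1_j\bigr)^{\otimes n}\circ\Delta_V^{(n-1)}(\xi)$ only the terms that distribute the $n$ letters into $n$ nonempty blocks survive, i.e.\ the all-singleton terms, and their sum is exactly $s_n\bigl(f^1_1(v_1)\odot\cdots\odot f^1_1(v_n)\bigr)$; injectivity of $s_n$ then gives $f^n_n(\xi)=f^1_1(v_1)\odot\cdots\odot f^1_1(v_n)$. The same computation with any $i>n$ in place of $n$ forces at least one block to be empty, hence $f^i_n=0$ for all $i>n$; and $f^0_n=0$ for $n\ge1$ follows by a short induction on $n$ comparing the $W^{\odot0}\otimes W^{\odot0}$-components on the two sides of $\Delta_W f(\xi)=(f\otimes f)\Delta_V(\xi)$ (that component $c$ is found to satisfy $c=2c$). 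Writing $f(\xi)=\sum_{i\ge0}f^i_n(\xi)$ and combining these three facts gives $f(v_1\odot\cdots\odot v_n)-f^1_1(v_1)\odot\cdots\odot f^1_1(v_n)\in\bigoplus_{0<i<n}W^{\odot i}$.

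Finally, for the isomorphism statement I would use the exhaustive, bounded-below filtration of $S^c(V)$ by the subcoalgebras $\bigoplus_{n\le m}V^{\odot n}$. The vanishing $f^i_n=0$ for $i>n$ just obtained says $f$ respects this filtration, and the top-degree formula says the induced map on the associated graded is $\bigoplus_m (f^1_1)^{\odot m}$; hence $\operatorname{gr}(f)$ is an isomorphism if and only if $f^1_1$ is. A filtered morphism between exhaustive, bounded-below filtered vector spaces is an isomorphism if and only if its associated graded is (a routine induction on the filtration degree yields both injectivity and surjectivity); applying this to $f$, and noting that if $f$ is invertible then $f^{-1}$ is again a morphism of symmetric coalgebras and hence also filtered, we conclude that $f$ is an isomorphism if and only if $f^1_1$ is. The only genuine bookkeeping I anticipate is the identification of $\pi_W^{\otimes n}\circ\Delta_W^{(n-1)}$ with $s_n$ together with keeping the Koszul signs consistent; conceptually this is just the fact that $(S^c(W),\Delta)$ is the cofree conilpotent cocommutative coalgebra on $W$, so that coalgebra morphisms into it correspond bijectively to linear maps out of the augmentation coideal.
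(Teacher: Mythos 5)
Your proof is correct and complete. Note, however, that the paper itself does not prove this proposition: it is stated as ``well known (see e.g. \cite{LadaMarkl,ManRendiconti}) and, in any case, easy to prove'', so there is no internal argument to compare against. What you supply is essentially the standard proof of cofreeness of $S^c(W)$ over conilpotent cocommutative coalgebras, organized around the identity $\pi_W^{\otimes n}\circ\Delta_W^{(n-1)}=s_n$ on $W^{\odot n}$ (and $=0$ on $W^{\odot m}$, $m\neq n$) together with the splitting of $s_n$ by $\frac{1}{n!}$ times the projection, which is where characteristic $0$ enters. All the delicate points are handled: you do not assume counit compatibility (consistent with the paper's convention, under which the zero map is a coalgebra morphism), instead deriving $f^0_n=0$ for $n\ge 1$ from the $\K\otimes\K$-component of comultiplicativity and $f(1)=1$; you use $f^1_0=0$ to kill empty blocks, which gives both $f^i_n=0$ for $i>n$ and the identification $f^n_n=(f^1_1)^{\odot n}$; and in the isomorphism step you correctly observe that the converse direction needs $f^{-1}$ to be filtered, which follows because $f^{-1}$ is again a coalgebra morphism. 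The Koszul signs match because $f^1_1$ has degree $0$. One could shorten the argument slightly by avoiding $s_n$ altogether and inductively reading off $f^i_j$ from the $(W^{\odot 1}\otimes W^{\odot(i-1)})$-component of $\Delta_W f=(f\otimes f)\Delta_V$, which also works in positive characteristic, but over $\K$ of characteristic $0$ your route is perfectly adequate.
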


For every graded coalgebra $C$ we shall denote by $\Coder^*(C,C)$ the graded Lie algebra of 
coderivations of $C$.

\begin{proposition}\label{prop.corestrictionderivation} 
The corestriction map gives an isomorphism of graded vector spaces
\[ \Coder^*(S^c(V),S^c(V))\to \Hom^*_{\K}(S^c(V),V)=
\prod_{k\ge 0}\Hom^*_{\K}(V^{\odot k},V)\]
whose inverse map 
\[ \Hom^*_{\K}(V^{\odot k},V)\ni \; q\mapsto \widehat{q}\; \in \Coder^*(S^c(V),S^c(V))\]
is described explicitly by the formulas
\[ \widehat{q}\,(v_1\odot\cdots\odot v_n)=
\sum_{\sigma\in S(k,n-k)}\epsilon(\sigma) 
q(v_{\sigma(1)}\odot\cdots\odot  v_{\sigma(k)})\odot 
v_{\sigma(k+1)}\odot\cdots\odot  v_{\sigma(n)}\;.\]
\end{proposition}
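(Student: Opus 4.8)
The plan is to prove Proposition~\ref{prop.corestrictionderivation} by first making precise what a coderivation of $S^c(V)$ is, then checking that the assignment $q \mapsto \widehat q$ is well-defined (lands in coderivations), and finally exhibiting the corestriction as a two-sided inverse. Recall that a coderivation of a graded coalgebra $(C,\Delta)$ is a linear map $D\colon C\to C$ satisfying the co-Leibniz identity $\Delta D = (D\otimes \Id + \Id\otimes D)\Delta$. The first step is purely formal: given $D\in\Coder^*(S^c(V),S^c(V))$, its corestriction $\pi D$ (where $\pi\colon S^c(V)\to V$ is the canonical projection) is an element of $\Hom^*_{\K}(S^c(V),V)=\prod_{k\ge 0}\Hom^*_{\K}(V^{\odot k},V)$, so the corestriction map is obviously a well-defined linear, degree-preserving map; one should note it is also a morphism of graded Lie algebras for the commutator bracket on $\Coder^*$ and the Nijenhuis--Richardson bracket on the target, but for the present statement only the linear isomorphism is claimed.

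Next I would verify that for $q\in\Hom^*_{\K}(V^{\odot k},V)$ the explicit formula for $\widehat q$ indeed defines a coderivation. This is the computational heart of the proof: one plugs $\widehat q$ into the co-Leibniz identity and expands $\Delta(\widehat q(v_1\odot\cdots\odot v_n))$ using the formula for $\Delta$ on $S^c(V)$, comparing it term by term with $(\widehat q\otimes\Id + \Id\otimes\widehat q)\Delta(v_1\odot\cdots\odot v_n)$. The matching of terms is a bookkeeping argument on (un)shuffle permutations: each summand on one side corresponds, after recombining the shuffle $\sigma\in S(k,n-k)$ with an inner split of the complementary $(n-k)$-tuple, to exactly one summand on the other side, and the Koszul signs $\epsilon(\sigma)$ are compatible because $\epsilon$ is multiplicative under composition of the relevant permutations. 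I would carry this out only schematically, since it is the standard cofree-coalgebra computation (as cited, \cite{LadaMarkl,ManRendiconti}); the one point deserving a sentence is that for fixed $n$ only finitely many $k\le n$ contribute, so there are no convergence issues.

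Then I would show the two maps are mutually inverse. In one direction: starting from $q$, form $\widehat q$, and compute its corestriction $\pi\widehat q$. From the explicit formula, $\pi(\widehat q(v_1\odot\cdots\odot v_n))$ picks out the summands landing in $V^{\odot 1}$, which forces $n-k=0$, i.e.\ $n=k$, leaving exactly $q(v_1\odot\cdots\odot v_n)$ when $n=k$ and $0$ otherwise; hence $\pi\widehat q = q$ componentwise. In the other direction: given $D\in\Coder^*$, set $q^{(k)}=D^1_k$ and $q=\sum_k q^{(k)}$, and we must show $D=\widehat q$. Here one argues by induction on the word length $n$, using the co-Leibniz identity: writing $D(v_1\odot\cdots\odot v_n)$ and applying $(\pi\otimes\Id)\Delta$ and $(\Id\otimes\pi)\Delta$ to both $D$ of the monomial and to $\widehat q$ of the monomial, one shows $D$ and $\widehat q$ agree after projecting to $V^{\odot 1}\otimes V^{\odot(n-1)}$ and $V^{\odot(n-1)}\otimes V^{\odot 1}$; since $\bigoplus_{0<i<n}V^{\odot i}$ embeds into $\bigoplus_{0<i<n}(V^{\odot 1}\otimes V^{\odot(i-1)})$ via iterated comultiplication (and the top piece $V^{\odot n}$ never appears because a coderivation of a conilpotent-graded-piece kind lowers or preserves but the relevant component is handled by the corestriction being the full datum), one concludes $D(v_1\odot\cdots\odot v_n)=\widehat q(v_1\odot\cdots\odot v_n)$. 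Equivalently, and more cleanly, one invokes the previous proposition's uniqueness statement: a morphism--or here a coderivation, by the same cofreeness argument--of $S^c(V)$ is determined by its corestriction, so $\pi D=\pi\widehat q=q$ already forces $D=\widehat q$.

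The main obstacle I expect is organizing the sign bookkeeping in the coderivation check so that it is genuinely a proof rather than an assertion: one must set up the unshuffle combinatorics and the multiplicativity of the Koszul sign $\epsilon$ carefully enough that the term-by-term correspondence is transparent. A secondary subtlety is making the "determined by corestriction" principle for coderivations precise — it is the analogue of the previous proposition for morphisms, and I would either cite it or sketch the one-line induction on length, being careful that the conilpotent/graded structure of $S^c(V)$ (each $V^{\odot n}$ split off) is what makes the iterated $\Delta$ faithful on $\bigoplus_{0<i<n}V^{\odot i}$. Everything else is formal.
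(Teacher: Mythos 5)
The paper offers no proof of this proposition: it is stated as ``well known \dots and, in any case, easy to prove,'' with references to Lada--Markl and \cite{ManRendiconti}, so there is no in-paper argument to compare against. Your outline is the standard cofreeness argument found in those references and is correct: the two computational pillars are (i) the shuffle/Koszul-sign verification that $\widehat q$ satisfies the co-Leibniz identity, and (ii) the fact that a coderivation of $S^c(V)$ is determined by its corestriction, after which $\pi\widehat q=q$ (forced by $\widehat q(V^{\odot n})\subseteq V^{\odot n-k+1}$, so only $n=k$ contributes to the $V$-component) closes the loop. The one passage that is muddled rather than wrong is your justification of (ii): the clean statement is that if $\pi D=0$ and $D$ vanishes on words of length $<n$, then the co-Leibniz identity applied to $v_1\odot\cdots\odot v_n$ reduces to $\Delta D(x)=D(x)\otimes 1+1\otimes D(x)$, and the only such ``primitive'' elements of $S^c(V)$ lie in $V$, where $\pi D=0$ kills them; this replaces your sentence about the top piece $V^{\odot n}$ and the conilpotent grading, which as written does not parse into an argument. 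With that repair the proof is complete and is, as far as one can tell, exactly the argument the paper's citations have in mind.
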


For $k=0$ the formula of Proposition~\ref{prop.corestrictionderivation} should be interpreted in the following sense: if $q\in \Hom^*_{\K}(V^{\odot 0},V)\cong \Hom^*_{\K}(\K,V)$, then 
\[ \widehat{q}(1)=q(1),\qquad \widehat{q}(v_1\odot\cdots\odot v_n)=q(1)\odot v_1\odot\cdots\odot v_n\;.\]
Therefore, for $q\in \Hom^*_{\K}(V^{\odot k},V)$ we have $\widehat{q}(V^{\odot n})\subseteq 
V^{\odot n-k+1}$.

The graded commutator  on $\Coder^*(S^c(V),S^c(V))$ induces, 
via the corestriction isomorphism, a bracket 
\[ [-,-]_{NR}\colon \Hom^*_{\K}(S^c(V),V)\times 
\Hom^*_{\K}(S^c(V),V)\to \Hom^*_{\K}(S^c(V),V)\;,\]
known as  Nijenhuis-Richardson bracket. In the notation of 
Proposition~\ref{prop.corestrictionderivation} we have 
\[ [-,-]_{NR}\colon \Hom^*_{\K}(V^{\odot n},V)\times 
\Hom^*_{\K}(V^{\odot m},V)\to \Hom^*_{\K}(V^{\odot n+m-1},V)\;,\]
\[ [f,g]_{NR}=f\,\widehat{g}-(-1)^{\bar{f}\;\bar{g}}g\,\widehat{f}\;.\]

\begin{definition} An $L_{\infty}[1]$-algebra is a graded vector space $V$ equipped with 
a coderivation of degree $+1$,  $Q\in \Coder^1(S^c(V),S^c(V))$ such that $Q(1)=0$ and $QQ=\frac{1}{2}[Q,Q]=0$. 
An $L_{\infty}$-morphism $f\colon (V,Q)\dashrightarrow (W,R)$ 
of $L_{\infty}[1]$-algebras is a morphism of symmetric  coalgebras
$f\colon S^c(V)\to S^c(W)$ such that $fQ=Rf$.
\end{definition}

Thus, see e.g. \cite{K}, there exists a canonical bijection between the set of $L_{\infty}$-algebra 
structures of a graded vector space $V$ and the he set of $L_{\infty}[1]$-algebra 
structures of a graded vector space $V[1]$. In particular every result about $L_{\infty}$-algebras 
holds, mutatis mutandis, also for $L_{\infty}[1]$-algebras.

Clearly we can define an $L_{\infty}[1]$-algebra also in terms of the 
Nijenhuis-Richardson bracket: more precisely 
an  $L_{\infty}[1]$-algebra is an $\infty$-uple
$(V,q_1,q_2,\ldots)$, where $q_n\in \Hom^1_{\K}(V^{\odot n},V)$ are such that for every $n>0$
\[\sum_{k=1}^{n-1}[q_k,q_{n-k}]_{NR}=0\;;\] 
the relation between the above two definitions is given by 
\[(V,Q)\mapsto (V,Q^1_1,Q^1_2,Q^1_3,\ldots),\qquad
(V,q_1,q_2,\ldots)\mapsto \left(V,\sum_{i>0} \widehat{q_i}\right)\;.\]

Notice that, if $(V,q_1,q_2,\ldots)$ is an $L_{\infty}[1]$-algebra we have $q_1q_1=0$ and then 
$(V,q_1)$ is a complex of vector spaces; we shall denote by $H^*(V)$ its cohomology, called the tangent cohomology of $V$. Since the equation $[q_1,q_2]_{NR}=0$ may be written as 
\[ q_1(q_2(x,y))+q_2(q_1(x),y)+(-1)^{\bar{x}}q_2(x,q_1(y))=0\]
we have that $q_2$ factors to a graded commutative (quadratic)
bracket  on tangent cohomology:
\[ q_2\colon H^*(V)\times H^*(V)\to H^*(V)\;.\]
It is well known and in any case easy to prove that, for every $L_{\infty}$-morphism 
$f\colon (V,q_1,q_2,\ldots)\dashrightarrow (W,r_1,r_2,\ldots)$ its linear component
\[ f^1_1\colon (V,q_1)\to (W,r_1)\]
is a morphism of complexes whose restriction to tangent cohomology
\[ f^1_1\colon H^*(V)\to H^*(W)\]
commutes with the quadratic brackets $q_2,r_2$.

\begin{example}[D\'ecalage]\label{ex.decalageofDGLA} 

The d\'ecalage functor,  from the category of differential graded Lie algebras to the category of $L_{\infty}[1]$-algebra is defined as 
\[ (L,d,[-,-])\mapsto (V,q_1,q_2,0,0,\ldots)\] 
where:
\begin{enumerate} 
\item $V$ is a graded vector space equipped with a linear map  $s\colon V\to L$, of degree $+1$, inducing an isomorphism  $s\colon V^i\xrightarrow{\;\simeq\;}L^{i+1}$ for every $i$;

\item the maps $q_1,q_2$ are defined  by the formulas:
\[ sq_1(v)=-d(sv),\qquad sq_2(u,v)=-(-1)^{\bar{u}}[su,sv],\qquad u,v\in V\;.\]
\end{enumerate}

Conversely,  every $L_{\infty}[1]$-algebra $(V,q_1,q_2,\ldots)$ such that $q_i=0$ for every $i\ge 3$ is the 
d\'ecalage  of a differential graded Lie algebra. 
\end{example}

\begin{definition} An $L_{\infty}$-morphism $f\colon (V,q_1,q_2,\ldots)\dashrightarrow (W,r_1,r_2,\ldots)$ is called a weak equivalence if induces an isomorphism between tangent cohomology groups
$f^1_1\colon H^*(V)\mapor{\simeq}H^*(W)$.
\end{definition}

\begin{definition} An $L_{\infty}[1]$-algebra $(V,q_1,q_2,\ldots)$ is said to be minimal if 
$q_1=0$.
\end{definition}

In other words, an $L_{\infty}[1]$-algebra $V$ is minimal if and only if $H^*(V)=V$.

\begin{theorem}[Minimal model theorem]\label{thm.minimalmodel} 
For every  
$L_{\infty}[1]$-algebra $(V,q_1,q_2,\ldots)$ there exist a minimal 
$L_{\infty}[1]$-algebra $(W,0,r_2,\ldots)$ and two weak equivalences 
\[ f\colon (V,q_1,q_2,\ldots)\dashrightarrow (W,0,r_2,\ldots),\qquad g\colon 
(W,0,r_2,\ldots)\dashrightarrow (V,q_1,q_2,\ldots)\]
such that $fg$ is the identity on $W$.
\end{theorem}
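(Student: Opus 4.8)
The plan is to construct the minimal model by the standard homological perturbation / transfer argument, carried out at the level of the coalgebra $S^c(V)$. First I would use the characteristic-zero hypothesis to split the complex $(V,q_1)$ as a direct sum of subcomplexes: choose a graded subspace $W\subseteq \ker q_1$ mapping isomorphically onto $H^*(V)$, a graded complement $C$ of $\ker q_1$ in $V$, and set $B=q_1(C)$, so that $V=W\oplus B\oplus C$ with $q_1\colon C\xrightarrow{\simeq}B$ an isomorphism and $q_1$ zero on $W\oplus B$. This gives a contraction data: a projection $\pi\colon V\to W$, an inclusion $\iota\colon W\to V$, and a homotopy $h\colon V\to V$ of degree $-1$ with $\iota\pi-\Id_V=q_1h+hq_1$, $h\iota=0$, $\pi h=0$, $hh=0$ (these side conditions can always be arranged). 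I would take $r_1=0$ on $W$.

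Next I would build the higher maps $r_n\colon W^{\odot n}\to W$ (for $n\ge 2$) and the components $f^1_n\colon V^{\odot n}\to W$, $g^1_n\colon W^{\odot n}\to V$ simultaneously by recursion on $n$, via the familiar sum-over-trees formulas of homotopy transfer: on the coderivation side one sets $R=\sum_{n\ge 2}\widehat{r_n}$, $g$ the coalgebra morphism with corestriction determined by the tree expansion using $\iota$ at the leaves, $h$ on internal edges, and $Q$ at the vertices, and then reads off $R$ and $f$ from the compatibility $Qg=gR$, $fg=\Id_{S^c(W)}$, $fQ=Rf$. Concretely $g^1_1=\iota$, $f^1_1=\pi$, and inductively $g^1_n = -h\,Q\,(g^{\odot}\text{-part})$, $r_n = \pi\,Q\,(g^{\odot}\text{-part})$ where the ``$g^{\odot}$-part'' means applying $Q$ to the already-constructed lower terms of $g$. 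One checks directly that $f^1_1 g^1_1=\pi\iota=\Id_W$ and that with $f=\pi\circ(\text{inverse of }g\text{ onto its image})$, or more simply with $f$ defined by the dual tree formula using $\pi$, $h$, $Q$, one gets $fg=\Id$ on the nose. Since $f^1_1=\pi$ induces the identity $H^*(V)=W\to W=H^*(W)$ and $g^1_1=\iota$ induces its inverse, both $f$ and $g$ are weak equivalences.

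The verifications that remain are: (i) $R$ is a genuine codifferential, i.e. $RR=0$ and $R(1)=0$ with no linear term, equivalently that $(W,0,r_2,\dots)$ is an $L_\infty[1]$-algebra; (ii) $f$ and $g$ are $L_\infty$-morphisms, i.e. $fQ=Rf$ and $Qg=gR$; (iii) $fg=\Id_{S^c(W)}$. All three follow formally once one sets up the recursion correctly: $R(1)=0$ and the absence of a linear term are built in since $g^1_1=\iota$ lands in $\ker q_1$ and $\pi q_1=0$; $Qg=gR$ is exactly the relation used to define $R$ and $g^1_{\ge 2}$, propagated by the freeness of $S^c(W)$ as a coalgebra (a morphism out of it is determined by its corestriction, and the corestriction identity is what the recursion enforces); $RR=0$ then follows from $QQ=0$ by pushing the identity $g(RR)=(QQ)g=0$ through the injectivity of $g$ (which holds because $g^1_1=\iota$ is injective, so $g$ is injective by the first Proposition of Section~\ref{sec.review}); and $fg=\Id$ is arranged by the choice of $f$ together with $f^1_1g^1_1=\Id_W$, again using that a coalgebra morphism $S^c(W)\to S^c(W)$ is determined by its corestriction.

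The main obstacle, and the only place real work is needed, is (ii)–(iii): making the tree recursion precise with all Koszul signs so that $fQ=Rf$ holds for $f$ simultaneously with $Qg=gR$ and $fg=\Id$ — one must be careful that $f$ is not simply ``the inverse of $g$'' (which need not be a coalgebra morphism) but is built by its own perturbation series, and that the side conditions $h\iota=0$, $\pi h=0$, $hh=0$ are used to kill the cross terms that would otherwise obstruct $fg=\Id$. I expect to either spell out the inductive formulas and check the signs directly, or — to keep the exposition short — to invoke the existence of the minimal model as a known result (Kontsevich, Kadeishvili, Lada--Markl, Canonaco, \cite{LadaMarkl,ManRendiconti}) and merely indicate the construction, noting that the splitting in characteristic zero is what makes the homotopy $h$ available and that the resulting $f,g$ are weak equivalences with $fg=\Id$ by construction.
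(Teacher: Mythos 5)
Your proposal is correct: the standard homotopy-transfer construction with the side conditions $h\iota=0$, $\pi h=0$, $hh=0$ does produce a minimal $L_{\infty}[1]$-structure on $W\cong H^*(V)$ together with weak equivalences $f,g$ satisfying $fg=\Id_{S^c(W)}$, and your recursive/tree description, the injectivity of $g$ from $g^1_1=\iota$, and the deduction $RR=0$ from $QQ=0$ are all sound. The paper gives no proof of its own here --- it simply cites Lemma 4.9 of \cite{K} and Theorem 3.0.9 of \cite{kontsevichSoibelmanbook} --- and your sketch is essentially the argument contained in those references, so this matches the paper's (delegated) approach.
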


\begin{proof} See e.g. either Lemma 4.9 of \cite{K}, or 
Theorem 3.0.9 of \cite{kontsevichSoibelmanbook}.\end{proof}

The $L_{\infty}[1]$-algebra $(W,0,r_2,\ldots)$  as in the above theorem is unique up to 
isomorphisms and it  is called 
the \emph{minimal model} of $(V,q_1,q_2,\ldots)$.  
It is worth to mention that, as a consequence of Theorem~\ref{thm.minimalmodel}, if 
$f\colon (V,q_1,q_2,\ldots)\dashrightarrow (W,r_1,r_2,\ldots)$ is a weak equivalence, then there exists a weak equivalence $g\colon (W,r_1,r_2,\ldots)\dashrightarrow (V,q_1,q_2,\ldots)$ such that 
$g^1_1\colon H^*(W)\to H^*(V)$ is the inverse of $f^1_1\colon H^*(V)\to H^*(W)$.

\begin{definition} An $L_{\infty}[1]$-algebra $(V,q_1,q_2,\ldots)$ is said to be formal  
if it is weak equivalent to a (pure quadratic) $L_{\infty}[1]$-algebra $(W,0,r_2,0,0,\ldots)$.
It is called homotopy abelian if it is weak equivalent to a trivial 
$L_{\infty}[1]$-algebra $(W,0,0,0,0,\ldots)$
\end{definition}

Therefore, an $L_{\infty}[1]$-algebra $(V,q_1,q_2,\ldots)$ is  formal
if and only if its minimal model is isomorphic to  $(H^*(V),0,r_2,0,0,\ldots)$, where 
$r_2$ is the restriction of the quadratic component $q_2$ to the tangent cohomology $H^*(V)$.

It is well known, see e.g. \cite{hinichDGC}, 
that two differential graded Lie algebras are quasi-isomorphic 
if and only if they have  isomorphic $L_{\infty}[1]$ minimal models. In particular a differential graded Lie algebra is formal (resp.: homotopy abelian) if and only if the associated $L_{\infty}[1]$-algebra is formal (resp.: homotopy abelian).

\bigskip
\section{Homotopy invariance of Chevalley-Eilenberg spectral sequence}
\label{sec.homotopyinvariance}

Let's recall, following \cite{Gode}, 
the detailed construction of the  spectral sequence associated to a differential filtered complex.

Let $M$ be an abelian group, equipped with a homomorphism $d\colon M\to M$ such that $d^2=0$ 
and a decreasing filtration $F^pM$, $p\in \Z$, such that $d(F^pM)\subset F^pM$. 
The associated spectral sequence $(E^{p}_r,d_r)$, $r\ge 0$, is defined as
\[ Z^{p}_r=\{x\in F^pM\mid d x\in F^{p+r}M\},\qquad 
E^{p}_r=\frac{Z^{p}_r}{Z^{p+1}_{r-1}+dZ^{p-r+1}_{r-1}}\;,\]
and the maps 
\[d_r\colon E^{p}_r\to E^{p+r}_r\] 
are induced by $d$ in the obvious way. 
We have
$d_r^2=0$ and there exist natural isomorphisms 
\[ E^{p}_{r+1}\simeq \frac{\ker(d_r\colon E^{p}_r\to E^{p+r}_r)}{d_r(E^{p-r}_r)}.\]

If $M=\oplus M^n$ is graded, $d(M^n)\subset M^{n+1}$ and every $F^pM=\oplus_n F^pM^n$ 
is a graded subgroup, then 
every group $E^p_r$ inherits a natural graduation, namely: 
\[ Z^{p,q}_r=\{x\in F^pM^{p+q}\mid d x\in F^{p+r}M^{p+q+1}\},\qquad 
E^{p,q}_r=\frac{Z^{p,q}_r}{Z^{p+1,q-1}_{r-1}+dZ^{p-r+1,q+r-2}_{r-1}}\;,\]
~\\
\[E^p_r=\bigoplus_{q} E^{p,q}_r,\qquad d_r\colon E^{p,q}_r\to E^{p+r,q-r+1}_r,\qquad
E^{p,q}_{r+1}\simeq \frac{\ker(d_r\colon E^{p,q}_r\to E^{p+r,q-r+1}_r)}{d_r(E^{p-r,q+r-1}_r)}\;.\]

It is convenient to introduce  a refinement of the usual notion of degeneration of a spectral sequence
\cite{DGMS}.

\begin{definition}\label{def.collapsing1} 
We shall say that a cohomology spectral sequence $(E^{p,q}_r,d_r)$ degenerates at $E^{a,b}_k$ if 
the map $d_r\colon E^{a,b}_r\to E^{a+r,b-r+1}_r$ vanishes for every $r\ge k$.
A spectral sequence $(E^{p,q}_r,d_r)$ degenerates at $E_k$ if $d_r=0$ for every $r\ge k$.
\end{definition}

\bigskip

For every morphism of graded coalgebras $f\colon C\to D$ we shall denote by 
$\Coder^*(C,D;f)$ the graded vector space of coderivations $\alpha\colon C\to D$, with
the structure of $D$-comodule on $C$ induced by the morphism $f$. 
When $f$ is the identity we shall simply denote $\Coder^*(C,C)=\Coder^*(C,C;\Id_C)$.

\begin{definition} The  Chevalley-Eilenberg complex of an $L_{\infty}$-morphism 
$f\colon (V,Q)\dashrightarrow (W,R)$ of $L_{\infty}[1]$-algebras is the filtered differential 
complex 
\[ \begin{split}
CE(V,W;f)&=\Coder^*(S^c(V),S^c(W);f)\\
&=\{\alpha\in \Hom^*_{\K}(S^c(V),S^c(W))\mid 
\Delta\alpha=(\alpha\otimes f+f\otimes\alpha)\Delta\},\end{split}
\]
where:
\begin{enumerate}

\item the filtration is defined as 
\[ F^p{CE}(V,W;f)=\{\alpha\in \Coder^*({S^c}(V),{S^c}(W);f)\mid \alpha(V^{\odot i})=0,
\quad \forall\; i<p\};\]

\item  the differential $d$ is defined by the formula
\[ d\alpha=R\alpha-(-1)^{\bar{\alpha}}\alpha Q\;.\]

\end{enumerate}
\end{definition}

As in 
Proposition~\ref{prop.corestrictionderivation}, 
the corestriction map gives an isomorphism of graded vector spaces
\[ \Coder^*(S^c(V),S^c(W);f)\to \Hom^*_{\K}(S^c(V),W)=
\prod_{k\ge 0}\Hom^*_{\K}(V^{\odot k},W),\]
although the inverse map $\alpha\mapsto \widehat{\alpha}$   
is now described explicitly by a more complicated formula, cf. \cite{ManRendiconti}.
However for  our applications  we only need the description of $\widehat{\alpha}$ 
in some particular and easy cases, namely:  
\begin{enumerate}

\item for $w\in W$ the associated coderivation 
$\widehat{w}\in \Coder^*({S^c}(V),{S^c}(W);f)$ satisfies the equalities
\[ \widehat{w}(1)=w,\qquad \widehat{w}(v)=w\otimes f^1_1(v),\qquad v\in V\;.\]

\item for $\alpha\in \Hom^*_{\K}(V,W)$ the corresponding coderivation satisfies $\widehat{\alpha}(1)=0$, 
$\widehat{\alpha}(v)=\alpha(v)$ and
\[  
\widehat{\alpha}(v_1\odot v_2)=\alpha(v_1)\odot f_1^1(v_2)+(-1)^{\bar{v_1}\;\bar{v_2}}
\alpha(v_2)\odot f_1^1(v_1),\qquad v_1,v_2\in V\;.\]
\end{enumerate}

The cohomology spectral sequence of the  
filtered differential complex ${CE}(V,W;f)$ defined above will be 
denoted by $(E(V,W;f)^{p,q}_r,d_r)$.

\begin{lemma}\label{lem.criteriodicollassamentoparziale} 
Let $(E(V,W;f)_r,d_r)$ be the Chevalley-Eilenberg spectral sequence of 
an $L_{\infty}$-morphism  
$f\colon (V,0,q_2,q_3,\ldots)\dashrightarrow (W,0,r_2,r_3,\ldots)$  of minimal $L_{\infty}[1]$-algebras. Assume that for some integer $k\ge 3$ we have 
$q_3=\cdots=q_k=0$ and 
$r_3=\cdots=r_k=0$. Then $d_r=0$ for every $2\le r<k$ and therefore
\[ E(V,W;f)_2=E(V,W;f)_3=\cdots=E(V,W;f)_{k}\;.\]

\end{lemma}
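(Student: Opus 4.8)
The plan is to analyze the differential $d = R(-) - (-1)^{\bar\alpha}\alpha Q$ on $CE(V,W;f)$ filtration-degree by filtration-degree, using the explicit form of $Q = \sum_{i\ge 2}\widehat{q_i}$, $R = \sum_{i\ge 2}\widehat{r_i}$ and $f$. Recall that $E(V,W;f)^{p,q}_0 = F^pCE/F^{p+1}CE$ is identified via corestriction with $\Hom^*_{\K}(V^{\odot p},W)$, and the induced $d_0$ is the differential $\bar\delta$ coming from the linear parts $q_1, r_1$; since both are $0$ by minimality, $E_1 = E_0$ and $d_1$ is the piece of $d$ that raises filtration degree by exactly one. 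Under the hypothesis $q_3 = \dots = q_k = 0$, $r_3 = \dots = r_k = 0$, the coderivations $Q$ and $R$ decompose as (quadratic part) $+ $ (part of arity $\ge k+1$): $Q = \widehat{q_2} + Q_{\ge k+1}$, $R = \widehat{r_2} + R_{\ge k+1}$, where $\widehat{q_2}$ lowers tensor degree by exactly $1$ (hence raises filtration degree by $1$) and $Q_{\ge k+1}$ lowers tensor degree by at least $k$ (hence raises filtration degree by at least $k$); similarly for $R$. Likewise $f = f^1_1 + f_{\ge 2}$, where precomposing/postcomposing with $f_{\ge 2}$ also increases filtration degree.

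First I would make precise the bookkeeping: for $\alpha \in F^pCE$ with corestriction component $\alpha^1_p \colon V^{\odot p}\to W$, write $d\alpha = d_{(1)}\alpha + d_{(\ge k)}\alpha$ where $d_{(1)}\alpha$ lands in $F^{p+1}CE$ modulo $F^{p+2}CE$ and $d_{(\ge k)}\alpha \in F^{p+k}CE$. The point is that $d_{(1)}$ is built purely out of $\widehat{q_2}$, $\widehat{r_2}$ and $f^1_1$: on corestrictions it is (up to sign) the Chevalley–Eilenberg differential $\delta$ associated to the graded Lie algebra structures determined by $q_2$, $r_2$ and the morphism $f^1_1$ — exactly the $d_1$ of the spectral sequence. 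The intermediate contributions, which would give $d_2, \dots, d_{k-1}$, come from terms of $Q$, $R$, $f$ that raise filtration degree by $2, 3, \dots, k-1$; but by hypothesis there are no such terms (the arity-$3$ through arity-$k$ components of $q$ and $r$ vanish, and the only other way to raise the degree by a number strictly between $1$ and $k$ would require such components). Hence on the $E_2$-page any cocycle of $d_1$ can be represented by an $\alpha$ with $d\alpha \in F^{p+k}CE$ already, which forces $d_2(\bar\alpha) = \dots = d_{k-1}(\bar\alpha) = 0$ by the very definition $Z^{p,q}_r = \{x \in F^pCE : dx \in F^{p+r}CE\}$ and $d_r[x] = [dx]$. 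This gives $d_r = 0$ for $2 \le r < k$ and hence $E_2 = E_3 = \dots = E_k$.

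The main obstacle is the careful filtration-degree accounting for the composite $\alpha Q$ (and $R\alpha$) when $\alpha$ is a genuine coderivation over $f$, not merely its corestriction: a coderivation over $f$ has all components $\alpha^i_j$ determined by $\alpha^1_{j-i+1}$ together with $f$, and some of these "higher diagonal" components involve $f_{\ge 2}$, so one must check that composing with them does not accidentally produce a contribution that raises filtration degree by something in $\{2,\dots,k-1\}$. Concretely, $Q$ acting on $V^{\odot p}$ produces terms in $V^{\odot m}$ with $m \in \{p-1\} \cup \{p,p-1,\dots,p-k\}^c$... — more precisely $m = p-1$ (from $q_2$) or $m \le p-k$ (from $q_{\ge k+1}$) — and then applying $\alpha$ (expanded via its coderivation formula over $f$, where the non-corestriction pieces use $f^1_1$ on the "untouched" slots, with $f_{\ge 2}$ only entering in strictly higher filtration) keeps us either at filtration $p+1$ or at filtration $\ge p+k$. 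I would isolate this as a short lemma on the filtration-graded pieces of the operators $\widehat{w}$ and more generally of $\alpha\mapsto R\alpha$, $\alpha\mapsto\alpha Q$, and then the vanishing of $d_2,\dots,d_{k-1}$ is immediate. I expect the whole argument to be essentially the same bookkeeping that underlies Lemma~\ref{lem.degenerazioneparziale} and the homotopy invariance statements, specialized to the minimal setting where $d_0 = d_1 = 0$ cleanly.
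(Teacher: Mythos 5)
Your strategy is the same as the paper's: split $Q=\widehat{q_2}+Q'$ and $R=\widehat{r_2}+R'$ with $Q',R'$ raising the filtration by at least $k$, and argue that no part of the differential raises the filtration by an amount in $\{2,\dots,k-1\}$. However, the difficulty you yourself flag as ``the main obstacle'' is exactly where the argument stops being a proof, and your resolution of it (``with $f_{\ge 2}$ only entering in strictly higher filtration'') is false as stated. Concretely, for $\alpha\in\Hom^*_{\K}(V^{\odot p},W)$ the component $\widehat{\alpha}^2_{p+j}\colon V^{\odot p+j}\to W^{\odot 2}$ of the coderivation over $f$ is a shuffle sum of terms $\alpha(v_I)\odot f^1_j(v_J)$ with $|J|=j$, so $\widehat{r_2}\,\widehat{\alpha}$ has a corestriction component in arity $p+j$ equal to $\sum\pm r_2(\alpha(v_I),f^1_j(v_J))$ for every $j\ge 1$ with $f^1_j\ne 0$. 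This term raises the filtration degree by exactly $j$, is built only from $r_2$ and $f^1_j$, and is not killed by the hypothesis $q_3=\cdots=q_k=r_3=\cdots=r_k=0$. Hence the differential does \emph{not} decompose as $d_{(1)}+d_{(\ge k)}$: there are genuine intermediate pieces $d_{(j)}(\alpha)=\pm\, r_2\circ(\alpha\odot f^1_j)_{\mathrm{sh}}$ for $2\le j\le k-1$, and your conclusion that a $d_1$-cocycle $\alpha$ automatically has $d\widehat{\alpha}\in F^{p+k}CE$ fails. What the condition $d\widehat{\alpha}\in F^{p+k-1}$ actually gives is the vanishing of these pieces for $j\le k-2$ only, leaving $d_{k-1}[\alpha]$ represented by $\pm\,r_2\circ(\alpha\odot f^1_{k-1})_{\mathrm{sh}}$, whose vanishing as a class in $E(V,W;f)^{p+k-1}_{k-1}$ is precisely what remains to be shown.

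To be fair, the paper's own proof is equally terse at this point: it asserts that $d\widehat{\alpha}\in F^{p+k-1}$ with $k\ge 3$ forces $R_2\widehat{\alpha}-(-1)^{\bar{\alpha}}\widehat{\alpha}Q_2=0$, which is the same unjustified claim. That claim is clearly correct when $f$ is strict (e.g.\ $f=\Id$, the case needed for Lemma~\ref{lem.lemmachiave}), since then $\widehat{\alpha}^2_{n}=0$ for $n>p+1$ and $R_2\widehat{\alpha}-(-1)^{\bar{\alpha}}\widehat{\alpha}Q_2$ really is concentrated in filtration degree $p+1$; but for a general $L_{\infty}$-morphism the cross terms $r_2\circ(\alpha\odot f^1_j)$ are present. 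So you have reproduced the paper's argument, but the ``short lemma on the filtration-graded pieces'' you propose to isolate is exactly the statement that needs a real proof, and as formulated it is not true. A complete argument must either show that the classes $r_2\circ(\alpha\odot f^1_j)_{\mathrm{sh}}$ are $d_1$-coboundaries (for instance by exploiting the $L_{\infty}$-morphism identities $f^1_j\,\widehat{q_2}=r_2\circ\sum_{a+b=j+1}(f^1_a\odot f^1_b)_{\mathrm{sh}}$ valid for $j+1\le k$), or restrict to situations where these terms are absent; your step ``hence $d\alpha\in F^{p+k}CE$ already'' is therefore a genuine gap rather than a bookkeeping exercise.
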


\begin{proof} By induction it is sufficient to prove that $d_{k-1}=0$. 
We shall write $Q=Q_2+Q'$ for the codifferential of ${S^c}(V)$, where 
\[ Q_2=\widehat{q_2},\quad Q'=\sum_{i>k}\widehat{q_i},\qquad
Q_2(V^{\odot i})\subset V^{\odot i-1},\qquad 
Q'(V^{\odot i})\subset \oplus_{j\le i-k}V^{\odot j}\;.\]
Similarly we write $R=R_2+R'$, where
\[ R_2=\widehat{r_2},\quad R'=\sum_{i>k}\widehat{r_i},\qquad
R_2(W^{\odot i})\subset W^{\odot i-1},\qquad 
R'(W^{\odot i})\subset \oplus_{j\le i-k}W^{\odot j}\;.\]

An element $v\in E(V,W;f)^p_{k-1}$ 
is represented by a linear map $\alpha\in \Hom^*_{\K}(V^{\odot p},W)$ such that   
$R\widehat{\alpha}-(-1)^{\bar{\alpha}}\widehat{\alpha}Q\in  F^{p+k-1}\Coder^*({S^c}(V),{S^c}(W);f)$.
Since $k\ge 3$ we must have $R_2\widehat{\alpha}-(-1)^{\bar{\alpha}}\widehat{\alpha}Q_2=0$,
\[R\widehat{\alpha}-(-1)^{\bar{\alpha}}\widehat{\alpha}Q=R'\widehat{\alpha}-(-1)^{\bar{\alpha}}\widehat{\alpha}Q'\in
F^{p+k}\Coder^*({S^c}(V),{S^c}(W);f)\]
and this implies that $d_{k-1}(v)=0$.
\end{proof}

\begin{lemma}\label{lem.composizionemorfismi} 
Let $f\colon (V,Q)\dashrightarrow (W,R)$ and 
$g\colon (W,R)\dashrightarrow (U,S)$ be $L_{\infty}$-morphisms of $L_{\infty}[1]$-algebras.
Then the composition maps give two morphisms of filtered differential complexes
\[ g_*\colon \Coder^*({S^c}(V),{S^c}(W);f)\to \Coder^*({S^c}(V),{S^c}(U);gf),\]
\[ f^*\colon \Coder^*({S^c}(W),{S^c}(U);g)\to \Coder^*({S^c}(V),{S^c}(U);gf).\]
\end{lemma}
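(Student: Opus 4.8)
The plan is to verify directly that the composition maps $g_*$ and $f^*$ are well-defined and respect both the differential and the filtration. For $g_*$, given a coderivation $\alpha \in \Coder^*(S^c(V),S^c(W);f)$, I first check that $g_*\alpha := g\alpha$ is a coderivation $S^c(V)\to S^c(U)$ with respect to the $S^c(U)$-comodule structure on $S^c(V)$ induced by $gf$. This is a routine computation with coalgebra maps: using $\Delta\alpha = (\alpha\otimes f + f\otimes\alpha)\Delta$ and the fact that $g$ is a coalgebra morphism (so $\Delta g = (g\otimes g)\Delta$), one obtains
\[ \Delta(g\alpha) = (g\otimes g)(\alpha\otimes f + f\otimes\alpha)\Delta = (g\alpha\otimes gf + gf\otimes g\alpha)\Delta, \]
which is exactly the comodule coderivation condition for $gf$. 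The analogous check for $f^*\beta := \beta f$, where $\beta \in \Coder^*(S^c(W),S^c(U);g)$, uses that $f$ is a coalgebra morphism on the right and is equally straightforward.

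Next I would verify compatibility with the differentials. Recall the differential on $\Coder^*(S^c(V),S^c(W);f)$ is $d\alpha = R\alpha - (-1)^{\bar\alpha}\alpha Q$, and likewise with $(S,U,g)$ and $(S,U,gf)$ in the other cases. For $g_*$, since $g$ is an $L_\infty$-morphism we have $gR = Sg$, hence
\[ d(g\alpha) = S(g\alpha) - (-1)^{\bar\alpha} (g\alpha)Q = g(R\alpha) - (-1)^{\bar\alpha} g(\alpha Q) = g\,(d\alpha), \]
using also that $g$ has degree $0$ so $\overline{g\alpha} = \bar\alpha$. For $f^*$, the identity $fQ = Rf$ gives $d(\beta f) = S\beta f - (-1)^{\bar\beta}\beta Q' $... more precisely $d(\beta f) = S(\beta f) - (-1)^{\bar\beta}(\beta f)Q = (S\beta)f - (-1)^{\bar\beta}\beta(Rf) = (S\beta - (-1)^{\bar\beta}\beta R)f = (d\beta)f$, so $f^*$ commutes with $d$ as well.

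Finally I would check that both maps preserve the filtrations. Here one uses the standing minimality-type normalization that an $L_\infty$-morphism $f\colon S^c(V)\to S^c(W)$ satisfies $f(1)=1$ and $f(V^{\odot i}) \subseteq \bigoplus_{0 < j \le i} W^{\odot j}$ (equivalently $f^j_i = 0$ for $j > i$), which follows from the proposition on morphisms of symmetric coalgebras recalled in Section~\ref{sec.review}. Consequently, if $\alpha \in F^p\Coder^*(S^c(V),S^c(W);f)$, meaning $\alpha(V^{\odot i}) = 0$ for all $i < p$, then $(g\alpha)(V^{\odot i}) = g(\alpha(V^{\odot i})) = 0$ for $i < p$, so $g_*\alpha \in F^p$. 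Symmetrically, if $\beta \in F^p\Coder^*(S^c(W),S^c(U);g)$ and $i < p$, then $f(V^{\odot i}) \subseteq \bigoplus_{j\le i} W^{\odot j} \subseteq \bigoplus_{j < p} W^{\odot j}$, on which $\beta$ vanishes, so $(\beta f)(V^{\odot i}) = 0$ and $f^*\beta \in F^p$. Thus $g_*$ and $f^*$ are morphisms of filtered differential complexes. The only point requiring a little care — and the nearest thing to an obstacle — is the bookkeeping of the comodule structures: one must make sure the comodule coderivation condition is stated with respect to the correct composite morphism ($gf$ in both cases), but once the definitions are unwound this is immediate.
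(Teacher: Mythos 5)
Your proposal is correct and follows essentially the same route as the paper: the filtration is preserved because $f(V^{\odot i})\subseteq\bigoplus_{j\le i}W^{\odot j}$, and compatibility with the differentials follows from the intertwining relations $gR=Sg$ and $fQ=Rf$. The only difference is that you also spell out the check that $g\alpha$ and $\beta f$ satisfy the coderivation condition relative to $gf$, a point the paper leaves implicit.
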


\begin{proof}
Since $f(V^{\odot n})\subset \oplus_{i\le n}W^{\odot i}$ it is obvious that $f^*$ and $g_*$ preserve 
the filtrations.
Now
\[ g(d\alpha)=gR\alpha-(-1)^{\bar{\alpha}}\alpha Q=Sg\alpha-(-1)^{\bar{\alpha}}g\alpha Q=d(g\alpha)\]
\[ (d\alpha)f=S\alpha f-(-1)^{\bar{\alpha}}\alpha Rf=S\alpha f-
(-1)^{\bar{\alpha}}\alpha fQ=d(\alpha f).\]
\end{proof}

\begin{proposition}\label{prop.confrontoss} 
In the situation of  Lemma~\ref{lem.composizionemorfismi} the composition maps induce two morphisms of spectral sequences
\[ \xymatrix{E(W,U;g)^{p,q}_r\ar[r]^{f^*}&E(V,U;gf)^{p,q}_r&E(V,W;f)^{p,q}_r\ar[l]_{g_*}}\;:\]

\begin{enumerate} 

\item if $f$ is a weak equivalence, then $f^*\colon E(W,U;g)^{p,q}_r\to E(V,U;gf)^{p,q}_r$ is an isomorphism for every $r\ge 1$;

\item if $g$ is a weak equivalence, then $g_*\colon E(V,W;f)^{p,q}_r\to E(V,U;gf)^{p,q}_r$ 
is an isomorphism for every $r\ge 1$.
\end{enumerate}
\end{proposition}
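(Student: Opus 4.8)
The plan is to reduce both statements to a statement about the $E_1$-page and then invoke a standard comparison principle for spectral sequences of filtered complexes: if a morphism of filtered complexes induces an isomorphism on $E_r$ for some $r$, it induces an isomorphism on $E_s$ for all $s \ge r$ (this follows inductively from the natural isomorphism $E^{p,q}_{s+1} \simeq \ker d_s / \operatorname{Im} d_s$ already recorded in Section~\ref{sec.homotopyinvariance}). So the whole proof comes down to checking that $f^*$ (resp. $g_*$) is an isomorphism on $E_1$ when $f$ (resp. $g$) is a weak equivalence.

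First I would identify the $E_0$ and $E_1$ terms explicitly. By the definition of the filtration on $CE(V,W;f)$ and the corestriction isomorphism, one has
\[
E(V,W;f)^{p,*}_0 = \frac{F^p CE(V,W;f)}{F^{p+1}CE(V,W;f)} \cong \Hom^*_{\K}(V^{\odot p},W),
\]
and under this identification the differential $d_0$ induced by $d\alpha = R\alpha - (-1)^{\bar\alpha}\alpha Q$ becomes the differential built only from the linear parts: $\alpha \mapsto r_1\circ\alpha - (-1)^{\bar\alpha}\alpha\circ (q_1^{\odot p})$, i.e. the Hom-complex differential for the complexes $(V^{\odot p}, q_1)$ and $(W, r_1)$. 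Since $\K$ has characteristic $0$, $H^*(V^{\odot p}) = H^*(V)^{\odot p}$ (as in \eqref{equ.e1term}), so
\[
E(V,W;f)^{p,q}_1 = \Hom^q_{\K}(H^*(V)^{\odot p}, H^*(W)),
\]
and the map $f^1_1\colon H^*(V)\to H^*(W)$ is exactly the data through which $f$ acts on $E_1$; similarly for $g$ via $g^1_1$ and the composite via $(gf)^1_1 = g^1_1 f^1_1$.

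Then I would chase through the composition maps of Lemma~\ref{lem.composizionemorfismi} on associated graded pieces. For $g_*$: on $E_0$, $g_*$ sends $\alpha\in\Hom^*_\K(V^{\odot p},W)$ to the degree-$p$ corestriction part of $g\circ\widehat\alpha$, which modulo higher filtration is $g^1_1\circ\alpha$; hence on $E_1$ the map $g_*$ is precisely post-composition with $g^1_1\colon H^*(W)\to H^*(U)$. If $g$ is a weak equivalence then $g^1_1$ is an isomorphism on tangent cohomology, so post-composition is an isomorphism $\Hom^q(H^*(V)^{\odot p},H^*(W)) \to \Hom^q(H^*(V)^{\odot p},H^*(U))$; this gives part (2) at $E_1$, hence at all $E_r$, $r\ge 1$. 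For $f^*$: on $E_0$, $f^*$ sends $\beta\in\Hom^*_\K(W^{\odot p},U)$ to the degree-$p$ part of $\widehat\beta\circ f$, which modulo higher filtration is $\beta\circ (f^1_1)^{\odot p}$; so on $E_1$ it is pre-composition with $(f^1_1)^{\odot p}\colon H^*(V)^{\odot p}\to H^*(W)^{\odot p}$. If $f$ is a weak equivalence then $f^1_1$ is an isomorphism on cohomology, so $(f^1_1)^{\odot p}$ is an isomorphism, and pre-composition by it is an isomorphism of Hom-spaces; this gives part (1) at $E_1$, hence at all $E_r$, $r\ge 1$.

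The main obstacle I expect is bookkeeping rather than conceptual: verifying that the corestriction-component formulas for $g\circ\widehat\alpha$ and $\widehat\beta\circ f$ really do reduce, modulo the next filtration level, to $g^1_1\circ\alpha$ and $\beta\circ(f^1_1)^{\odot p}$ respectively. This requires knowing that $f(V^{\odot n})\subset\bigoplus_{i\le n}W^{\odot i}$ with leading term $(f^1_1)^{\odot n}$ — which is exactly the content of the Proposition on corestrictions in Section~\ref{sec.review} — together with a careful sign check, since the explicit inverse of the corestriction isomorphism for $f$-coderivations is more complicated than in the identity case. Once one is content to work on the associated graded, all the higher components of $f$, $g$, $Q$, $R$, $S$ drop out of $E_0$ and $E_1$, and the argument is completely formal. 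I would note in passing that this computation also re-proves the homotopy invariance claimed just before Theorem~\ref{thm.formalitycriteria}, by taking $U = V$, $W$ a minimal model, etc.
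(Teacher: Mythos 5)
Your proposal is correct and follows essentially the same route as the paper: identify $E_0^{p,q}$ with $\Hom^{p+q}_{\K}(V^{\odot p},W)$ via corestriction, observe that $d_0$ is the Hom-complex differential so that $E_1^{p,q}=\Hom^{p+q}_{\K}(H^*(V)^{\odot p},H^*(W))$ by K\"unneth, note that $f^*$ and $g_*$ act on $E_1$ by pre-composition with $(f^1_1)^{\odot p}$ and post-composition with $g^1_1$ respectively, and conclude for all $r\ge 1$ by the standard comparison principle. The paper compresses the last steps into ``the conclusion is now clear,'' but your bookkeeping of the leading terms of $g\circ\widehat{\alpha}$ and $\widehat{\beta}\circ f$ is exactly the intended argument.
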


\begin{proof}  For every integer $p$, the  corestriction map gives an isomorphism
\[ F^p{CE}(V,W;f)\simeq \prod_{n\ge p}\Hom^*_{\K}(V^{\odot n},W)\]
and therefore 
\[ E(V,W;f)^{p,q}_0=\Hom^{p+q}_{\K}(V^{\odot p},W)\;.\]

Given an element $\alpha\in\Hom^*_{\K}(V^{\odot p},W)$ we have $\alpha=\widehat{\alpha}_{|V^{\odot p}}$, 
$\widehat{\alpha}(V^{\odot i})=0$ for every $i<p$ and then 
\[ \widehat{\alpha}Q(v_1\odot\cdots\odot v_n)=
\alpha\left(\sum_{i=1}^n(-1)^{\bar{v_1}+\cdots+\bar{v_{i-1}}}v_1\odot\cdots\odot Q^1_1(v_i)
\odot\cdots\odot v_n\right)\;\] 
\[ R\widehat{\alpha}(v_1\odot\cdots\odot v_n)=R^1_1\alpha(v_1\odot\cdots\odot v_n)\;,\]
\[ d_0\alpha=R^1_1\alpha-(-1)^{\bar{\alpha}}\alpha
\left(\sum_{i=1}^n \Id^{\odot i-1}\odot Q^1_1\odot \Id^{\odot n-i}\right)\;.\]
In other terms $d_0$ is the standard  differential in $\Hom^*_{\K}(V^{\odot n},W)$; by 
K\"{u}nneth formula (cf. \cite[pag. 280]{Qui})
\[ E(V,W;f)^{p,q}_1=\Hom^{p+q}_{\K}(H^*(V)^{\odot p},H^*(W)).\]
The conclusion of the proof is now clear.
\end{proof}

\begin{definition}\label{def.eulerderivationLinfinity} 
The \emph{Euler derivation}  of an $L_{\infty}$-morphism $f\colon V\dashrightarrow W$ is the element
\[ e_f \in E(V,W;f)^{1,-1}_1=\Hom^0_{\K}(H^*(V),H^*(W))\]
defined as 
\[ e_f(v)=(\bar{v}+1)f^1_1(v),\qquad v\in H^*(V)\;.\]
\end{definition}

We are now ready to prove the main result of this section.

\begin{theorem}\label{thm.invarianzaalminimale}
Let $W$ be the minimal model of an $L_{\infty}[1]$-algebra $V$. 
Then there exists a morphism of spectral sequences 
\[ E(V,V)^{p,q}_r\to E(W,W)^{p,q}_r\]
which is an isomorphism for every $r\ge 1$ and preserves the Euler derivations.
\end{theorem}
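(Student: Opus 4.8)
The plan is to exploit the minimal model theorem (Theorem~\ref{thm.minimalmodel}), which gives weak equivalences $f\colon V\dashrightarrow W$ and $g\colon W\dashrightarrow V$ with $fg=\Id_W$, and then feed these into Proposition~\ref{prop.confrontoss}. Since $f$ is a weak equivalence, part (1) of that proposition applied to the composition $g\colon W\dashrightarrow V$ and $f\colon V\dashrightarrow W$ shows that
\[ f^*\colon E(V,V;g)^{p,q}_r \xrightarrow{\ \simeq\ } E(W,V;fg)^{p,q}_r = E(W,V;\Id_W)^{p,q}_r\]
wait---more carefully, I should apply the proposition in the configurations that produce maps between $E(V,V)$, $E(W,W)$, and the mixed complexes $E(V,W;f)$ and $E(W,V;g)$, using $fg=\Id_W$ to identify one of the mixed spectral sequences with $E(W,W)$.

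Concretely: apply Proposition~\ref{prop.confrontoss} to $g\colon W\dashrightarrow V$ followed by $\Id_V$, giving $g_*\colon E(W,W;g\text{-setup})$... let me set it up cleanly. Take the composable pair $g\colon (W,R)\dashrightarrow(V,Q)$ and $f\colon(V,Q)\dashrightarrow(W,R)$. Lemma~\ref{lem.composizionemorfismi} and Proposition~\ref{prop.confrontoss} then give, since $g$ is a weak equivalence, an isomorphism for $r\ge 1$
\[ f_*\colon E(W,V;g)^{p,q}_r \xrightarrow{\ \simeq\ } E(W,W;fg)^{p,q}_r = E(W,W)^{p,q}_r,\]
and since $f$ is a weak equivalence, an isomorphism for $r\ge 1$
\[ g^*\colon E(V,V)^{p,q}_r = E(V,V;\Id_V)^{p,q}_r \xrightarrow{\ \simeq\ } E(W,V;g)^{p,q}_r.\]
Here I'm using that $\Id_V$ is trivially a weak equivalence, applied to the pair $\Id_V\colon V\dashrightarrow V$ then $g\colon V\dashrightarrow W$... no, $g$ goes $W\to V$; the correct pair is $g\colon W\dashrightarrow V$ then $\Id_V\colon V\dashrightarrow V$, which gives $\Id_V$ trivially a weak equivalence so $(\Id_V)_*$ is an iso $E(W,V;g)\to E(W,V;g)$, which is useless, and $g^*\colon E(V,V)\to E(W,V;g)$ from $f=\Id_V$ being a weak equivalence. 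Composing, the desired morphism $E(V,V)^{p,q}_r\to E(W,W)^{p,q}_r$ is $f_*\circ g^*$ (for $r\ge 1$ it is a composition of two isomorphisms, hence an isomorphism), while for $r=0$ one checks directly from the $E_0$-description in the proof of Proposition~\ref{prop.confrontoss} that the composition maps still commute with $d_0$, so $f_*g^*$ is a genuine morphism of spectral sequences defined from $r=0$ on.

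It remains to check that this morphism preserves the Euler derivations. By definition $e_V=e_{\Id_V}\in E(V,V)^{1,-1}_1=\Hom^0_\K(H^*(V),H^*(V))$ is $v\mapsto(\bar v+1)v$, and similarly for $W$. Under the identification of $E_1$-terms with $\Hom^*_\K(H^*(\,\cdot\,)^{\odot p},H^*(\,\cdot\,))$, the map $g^*$ on $E^{1,-1}_1$ is precomposition-and-postcomposition by the tangent cohomology isomorphism $g^1_1$, and $f_*$ likewise by $f^1_1=(g^1_1)^{-1}$; since the Euler derivation is "multiplication of a homogeneous element by its shifted degree'', it is manifestly natural under any such conjugation, so $f_*g^*(e_{\Id_V})=e_{\Id_W}$. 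This last naturality is the step requiring a small explicit computation, but it is routine given the formulas for $\widehat w$ and for the $E_1$-identification already recorded before Definition~\ref{def.eulerderivationLinfinity}; the only mild subtlety is bookkeeping the degree shift $\bar v+1$ correctly against the $\odot$-grading, and I expect that to be the one genuinely calculational point, everything else being a formal assembly of Proposition~\ref{prop.confrontoss} and Theorem~\ref{thm.minimalmodel}.
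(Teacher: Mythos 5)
Your proof is correct and is essentially the paper's own argument: both factor the desired map as $E(V,V)^{p,q}_r\xrightarrow{\;g^*\;}E(W,V;g)^{p,q}_r\xrightarrow{\;f_*\;}E(W,W;fg)^{p,q}_r=E(W,W)^{p,q}_r$ and apply Proposition~\ref{prop.confrontoss} twice, with the Euler-derivation preservation being the routine check at $E_1$ (conjugation by the mutually inverse tangent isomorphisms $f^1_1$ and $g^1_1$) that you describe. The only slip is cosmetic: your two justifications are attached to the wrong arrows --- $g^*$ is an isomorphism because $g$ is a weak equivalence (part (1) of the proposition, precomposition), and $f_*$ because $f$ is (part (2), postcomposition) --- which is harmless here since both maps are weak equivalences.
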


\begin{proof} By minimal model theorem there exist two weak equivalences 
\[ g\colon V\dashrightarrow W,\qquad f\colon 
W\dashrightarrow V\]
such that $gf$ is the identity on $W$. It now sufficient to consider the morphisms
\[ E(V,V)^{p,q}_r\mapor{f^*}E(W,V;f)^{p,q}_r\mapor{g_*}E(W,W;gf)^{p,q}_r=E(W,W)^{p,q}_r\]
and apply Proposition~\ref{prop.confrontoss}.
\end{proof}

\begin{lemma} Let $e_f\in E(V,W;f)^{1,-1}_1$ be the Euler derivation of an $L_{\infty}$-morphism. 
Then $d_1(e_f)=0$.
\end{lemma}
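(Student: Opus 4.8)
The plan is to reduce to the minimal case and then carry out a short explicit computation on the $E_1$-page.

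First I would invoke the homotopy invariance of Proposition~\ref{prop.confrontoss}: composing $f$ with suitable weak equivalences to and from the minimal models of $V$ and $W$ (exactly as in the proof of Theorem~\ref{thm.invarianzaalminimale}) produces an isomorphism of spectral sequences, compatible with $d_1$ and carrying the Euler derivation of $f$ to the Euler derivation of the induced morphism of minimal models. Hence we may assume $V=(V,0,q_2,q_3,\dots)$ and $W=(W,0,r_2,r_3,\dots)$ are minimal, so that $H^*(V)=V$, $H^*(W)=W$, the differential $d_0$ vanishes and $E(V,W;f)^{p,q}_0=E(V,W;f)^{p,q}_1=\Hom^{p+q}_{\K}(V^{\odot p},W)$ by the computation in the proof of Proposition~\ref{prop.confrontoss}.

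In this situation the Euler derivation is literally the map $e_f\in\Hom^0_{\K}(V,W)$, $e_f(v)=(\bar{v}+1)f^1_1(v)$, and since $d_0=0$ it is automatically a $d_0$-cocycle. I would lift it to the coderivation $\widehat{e_f}\in F^1CE(V,W;f)$, whose only nonzero corestriction components are $(\widehat{e_f})^1_1=e_f$ and $(\widehat{e_f})^2_2(v_1\odot v_2)=e_f(v_1)\odot f^1_1(v_2)+(-1)^{\bar{v_1}\,\bar{v_2}}e_f(v_2)\odot f^1_1(v_1)$ by the formulas recalled before Lemma~\ref{lem.criteriodicollassamentoparziale}. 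Then $d\widehat{e_f}=R\widehat{e_f}-\widehat{e_f}Q$ lies in $F^2$ because its length-one-to-one part is $d_0e_f=0$, and $d_1(e_f)$ is represented by the length-two-to-one component $(d\widehat{e_f})^1_2\in\Hom^1_{\K}(V^{\odot 2},W)$. Computing it, the only surviving contributions are $r_2\circ(\widehat{e_f})^2_2$ from $R\widehat{e_f}$ and $e_f\circ q_2$ from $\widehat{e_f}Q$ (all other terms drop out for word-length reasons, using minimality and that on $V^{\odot 2}$ the coderivation $\widehat{e_f}$ only involves $f^1_1$), so
\[(d\widehat{e_f})^1_2(v_1\odot v_2)=r_2\bigl(e_f(v_1)\odot f^1_1(v_2)+(-1)^{\bar{v_1}\,\bar{v_2}}e_f(v_2)\odot f^1_1(v_1)\bigr)-e_f\bigl(q_2(v_1\odot v_2)\bigr).\]
Substituting $e_f(v)=(\bar{v}+1)f^1_1(v)$ and using the graded symmetry of $r_2$ together with the fact that $f^1_1$ preserves degree, the right-hand side collapses to $(\bar{v_1}+\bar{v_2}+2)\bigl(r_2(f^1_1 v_1\odot f^1_1 v_2)-f^1_1 q_2(v_1\odot v_2)\bigr)$, which vanishes because the linear component $f^1_1\colon H^*(V)\to H^*(W)$ of an $L_\infty$-morphism intertwines the quadratic brackets $q_2$ and $r_2$, as recalled in Section~\ref{sec.review}. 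Hence $(d\widehat{e_f})^1_2=0$ and $d_1(e_f)=0$.

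The step I expect to be the main obstacle is purely bookkeeping: keeping the Koszul signs in the corestrictions of $R\widehat{e_f}$ and $\widehat{e_f}Q$ under control and checking carefully that no other component of $R$, $Q$ or $f$ feeds into the length-two-to-one part. The reduction to the minimal case is what makes this manageable, since it removes all the $q_1,r_1$ terms at the outset; after that the argument is essentially the one-line cancellation in the display above.
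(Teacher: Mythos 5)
Your argument is correct and is essentially the paper's own proof: the paper likewise reduces to minimal $V$ and $W$ via Proposition~\ref{prop.confrontoss}, writes out the coderivation $\widehat{\alpha}$ for $\alpha\in\Hom^0_{\K}(V,W)$ using the same explicit formulas, obtains $(d_1\alpha)(v_1\odot v_2)=r_2(\alpha(v_1),f^1_1(v_2))+r_2(f^1_1(v_1),\alpha(v_2))-\alpha(q_2(v_1,v_2))$, and concludes by the same $(\bar{v_1}+\bar{v_2}+2)$ cancellation using that $f^1_1$ intertwines $q_2$ and $r_2$. The only cosmetic slip is calling $(\widehat{e_f})^2_2$ a ``corestriction component'' (the corestriction is the part landing in $W^{\odot 1}$), but the computation you perform with it is the right one.
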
 

\begin{proof} By Proposition~\ref{prop.confrontoss} it is not restrictive to assume both $V$ and $W$ minimal $L_{\infty}[1]$-algebras, say $V=(V,0,q_2,\ldots)$ and $W=(W,0,r_2,\ldots)$.
Let's give an explicit description of the two differentials
\[\xymatrix{ E(V,W;f)^{0,0}_1\ar[r]^{d_1}\ar@{=}[d]&E(V,W;f)^{1,-1}_1\ar[r]^{d_1}\ar@{=}[d]&
E(V,W;f)^{2,-2}_1\ar@{=}[d]\\
W^0&\Hom^0_{\K}(V,W)&\Hom^0_{\K}(V^{\wedge 2},W)}\]
Given $w\in W^0$ the associated coderivation $\widehat{w}\in \Coder^*({S^c}(V),{S^c}(W);f)$ satisfies the equalities
\[ \widehat{w}(1)=w,\qquad \widehat{w}(v)=w\otimes f^1_1(v),\qquad v\in V\] 
and then, 
\[ (d_1w)(v)=r_2(w,f^1_1(v))\in W\;.\]
Similarly for $\alpha\in \Hom^0_{\K}(V,W)$ the corresponding coderivation satisfies 
$\widehat{\alpha}(v)=\alpha(v)$, 
\[  
\widehat{\alpha}(v_1\odot v_2)=\alpha(v_1)\odot f_1^1(v_2)+(-1)^{\bar{v_1}\;\bar{v_2}}
\alpha(v_2)\odot f_1^1(v_1)=\alpha(v_1)\odot f_1^1(v_2)+
 f_1^1(v_1)\odot \alpha(v_2)\;,\]
and therefore 
\[ (d_1\alpha)(v_1\odot v_2)=r_2(\alpha(v_1),f_1^1(v_2))+
r_2(f_1^1(v_1),\alpha(v_2))-\alpha(q_2(v_1, v_2))\;.\]
In particular 
\[\begin{split} 
(d_1&e_f)(v_1\odot v_2)=\\
&=r_2((\bar{v_1}+1)f^1_1(v_1), f_1^1(v_2))+
r_2(f_1^1(v_1),(\bar{v_2}+1)f^1_1(v_2))-(\bar{v_1}+\bar{v_2}+2)f^1_1(q_2(v_1, v_2))\\
&=0\;.\qquad
\end{split}\]
\end{proof}

\begin{definition}\label{def.eulerclassLinfinity} 
The \emph{Euler class}  of an $L_{\infty}$-morphism $f\colon V\dashrightarrow W$ is the element
\[ e_f \in E(V,W;f)^{1,-1}_2\]
defined as the class of the Euler derivation modulus $d_1(W^0)$. 
The \emph{Euler class}  of an $L_{\infty}$-algebra is 
the Euler class of the identity.
\end{definition}

It is plain from the above results that the Euler class of an $L_{\infty}[1]$-algebra 
is invariant under weak equivalence.

\bigskip

\section{Formality criteria for $L_{\infty}[1]$-algebras}
\label{sec.criteriaLinfinity}

\begin{lemma}\label{lem.formalityimpliesdegeneration} 
Let $k$ be a positive integer and let $(V,q_1,q_2,\ldots)$ be an  
$L_{\infty}[1]$-algebra such that $q_i=0$ for every $i\not=k$. Then:
\begin{enumerate}
\item  the spectral 
sequence $E(V,V)^{p,q}_r$ degenerates at $E_k$;

\item if the spectral 
sequence $E(V,V)^{p,q}_r$ degenerates at $E^{1,-1}_{k-1}$ then also $q_k=0$.
\end{enumerate}

\end{lemma}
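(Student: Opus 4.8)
The idea is to compute everything very explicitly since the only nonzero higher bracket is $q_k$. Write $Q=\widehat{q_k}\in\Coder^1(S^c(V),S^c(V))$, so that $Q(V^{\odot n})\subseteq V^{\odot n-k+1}$ and in particular $Q$ shifts the filtration degree down by exactly $k-1$. For part (1), recall from the proof of Proposition~\ref{prop.confrontoss} that an element of $E(V,V)^{p,q}_0$ is represented by a linear map $\alpha\in\Hom^*_{\K}(V^{\odot p},V)$ with $\widehat\alpha(V^{\odot i})=0$ for $i<p$, and that the differential $d=Q\alpha-(-1)^{\bar\alpha}\alpha Q$ of the corresponding coderivation lands in $F^{p+k-1}$ (never in a lower filtration step), since both $Q\widehat\alpha$ and $\widehat\alpha Q$ drop the filtration by $k-1$. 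Hence $d_r=0$ automatically for $1\le r<k$ — the first possibly nonzero differential is $d_k$, which is exactly the assertion that the spectral sequence degenerates at $E_k$. (For $k=1$ there is nothing to prove; for $k\ge 2$ this is the whole point.)

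For part (2), assume the spectral sequence degenerates at $E^{1,-1}_{k-1}$, i.e. $d_r\colon E^{1,-1}_r\to E^{1+r,-r}_r$ vanishes for all $r\ge k-1$. By part (1) the only differential on $E^{1,-1}$ that can be nonzero is $d_k$, so the hypothesis forces $d_k(E^{1,-1}_k)=0$; in particular $d_k(e)=0$, where $e\in E(V,V)^{1,-1}_k=E(V,V)^{1,-1}_1=\Hom^0_{\K}(V,V)$ is the Euler class of $V$ (note $E_1=E_k$ here by part (1), and $V$ minimal so $H^*(V)=V$). The Euler derivation is $e(v)=(\bar v+1)v$, i.e. $e=\widehat\imath$ where $\imath$ is $(\deg+1)$ times the identity of $V$. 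The plan is to compute $d_k(e)\in E(V,V)^{k+1,-k}_k=\Hom^1_{\K}(V^{\odot k},V)$ directly from the formula $d_k(e)=\bigl[Q\widehat e-(-1)^{0}\widehat e\,Q\bigr]$ restricted to $V^{\odot k}$ (all lower-filtration correction terms vanish because $Q$ and the zero-order differential vanish in the relevant range). A short computation — the same kind already carried out in the proof that $d_1(e_f)=0$ — should give, on $v_1\odot\cdots\odot v_k$, an expression of the form (sum of the degree-shifts) $\cdot\,q_k(v_1,\dots,v_k)$, and since $\widehat e$ multiplies a monomial in $V^{\odot k}$ by $\sum_i(\bar v_i+1)$ while $Q=\widehat{q_k}$ lands in $V^{\odot 1}=V$ where $e$ multiplies by $\overline{q_k(v_1,\dots,v_k)}+1=\bigl(\sum_i\bar v_i\bigr)+1+1$ (using $\deg q_k=1$), the bracket collapses to $\bigl[(\textstyle\sum_i\bar v_i)+k-(\textstyle\sum_i\bar v_i)-2\bigr]\,q_k=(k-2)\,q_k$. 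Thus $d_k(e)=(k-2)\,q_k$ as an element of $E^{k+1,-k}_k$.

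Finally, one must pass from ``$(k-2)q_k$ is a coboundary in the $E_k$-page'' — which is what $d_k(e)=0$ literally means in $E_k=Z_k/(Z+dZ)$ — back to ``$q_k=0$''. Here the point is that for a minimal $L_\infty[1]$-algebra with only $q_k$ nonzero, the class of $q_k$ in $E(V,V)^{k+1,-k}_k$ is represented by $q_k$ itself and, because all differentials below $d_k$ vanish, $E_k=E_1$ on this spot, so being zero in $E_k$ means $q_k$ lies in the image of $d_{k-1}=0$ together with the kernel condition — i.e. $q_k$ is itself a $d_0$-coboundary? No: more carefully, $d_k(e)=0$ in $E_k$ says $(k-2)q_k\equiv 0$ modulo $Z^{k+2,-k-1}_{k-1}+d(Z^{2,-2}_{k-1})$; since these correction terms again vanish in our degenerate situation (nothing maps into filtration step $k+1$ except by $d_k$ from step $1$), we conclude $(k-2)q_k=0$ in $\Hom^1_{\K}(V^{\odot k},V)$, hence $q_k=0$ provided $k\ne 2$. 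The remaining case $k=2$ is genuinely different: then $d_k(e)=0q_2=0$ identically, so the hypothesis gives no information, and indeed part (2) must be read with the tacit understanding $k\ge 3$ (for $k=2$ the statement would be false — e.g. any nonabelian graded Lie algebra). \textbf{The main obstacle} is precisely this bookkeeping: isolating the universal constant $k-2$ cleanly out of the Koszul-sign-laden expansion of $[Q,\widehat e]$ on $V^{\odot k}$, and being honest about the $k=2$ degenerate case; once the constant $(k-2)$ is pinned down, the rest is immediate.
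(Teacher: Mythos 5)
Your two main ingredients — the observation that $d=[q_k,-]_{NR}$ raises the filtration index by exactly $k-1$, and the idea of pairing $d$ with a degree-counting element of $\Hom^0_{\K}(V,V)$ to extract $q_k$ — are the right ones, but both parts of the argument contain genuine errors.

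In part (1) the bookkeeping is off by one and, moreover, inverts the meaning of degeneration. Since $d$ raises the filtration by exactly $k-1$, the only differential that can be nonzero is $d_{k-1}$, not $d_k$: for $r<k-1$ the image of $d$ sits too deep in the filtration to be seen by $d_r$, while for $r\ge k$ the condition $x\in Z^p_r$ forces the leading component $x_p\in\Hom^*_{\K}(V^{\odot p},V)$ of $x$ to satisfy $dx_p=0$, whence $dZ^p_r\subset dZ^{p+1}_{r-1}$, which lies in the denominator of $E^{p+r}_r$. This last containment is the actual content of part (1) and is what the paper proves. Your claim that ``$d_r=0$ for $1\le r<k$'' is false — take $k=2$ and any nonabelian graded Lie algebra, where $d_1$ is the Chevalley--Eilenberg differential — and even if it were true it would not be the assertion to be proved: by Definition~\ref{def.collapsing1}, degeneration at $E_k$ means $d_r=0$ for all $r\ge k$, not for $r<k$.

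In part (2) the off-by-one propagates (you examine $d_k(e)$, which vanishes for free by part (1) and so carries no information; the relevant differential is $d_{k-1}$, landing in $E^{k,1-k}_{k-1}$ where the class of $q_k$ lives), and, more seriously, you test against the Euler derivation $e(v)=(\bar{v}+1)v$, for which $[q_k,e]_{NR}=(k-2)q_k$. This forces you to exclude $k=2$ and to declare the statement false there. The paper instead uses the identity: $\Id_V\in\Hom^0_{\K}(V,V)$ lies in $Z^{1,-1}_{k-1}$ and $[q_k,\Id_V]_{NR}=(k-1)q_k$, so the hypothesis gives that the class of $q_k$ is trivial in $E^{k,1-k}_{k-1}$, i.e.
\[ q_k\in d\bigl(F^2CE(V,V)\bigr)+F^{k+1}CE(V,V)=F^{k+1}CE(V,V), \]
because $d$ raises the filtration by $k-1$; since $q_k$ is concentrated in filtration degree exactly $k$, it vanishes. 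This works uniformly for every $k\ge 2$. Your proposed $k=2$ counterexample does not satisfy the hypothesis, since for a nonabelian graded Lie algebra $d_1(\Id_V)=q_2\neq 0$ in $E^{2,-1}_1$; and the $k=2$ case is genuinely needed downstream, e.g.\ in Corollary~\ref{cor.abelianitycriterionminimal}.
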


\begin{proof} Via the identification ${CE}(V,V)\simeq\Hom^*_{\K}
({S^c}(V),V)$ the differential of the complex becomes  
$d=[q_k,-]_{NR}$ and then $d(\Hom^*_{\K}
(V^{\odot n},V))\subset \Hom^*_{\K}
(V^{\odot n+k-1},V)$. 
Therefore if $\alpha=\sum \alpha_i$, with $\alpha_i\in \Hom^*_{\K}(V^{\odot i},V)$ and
$d\alpha\in F^{n+k}{CE}(V,V)$ then $d\alpha_i=0$ for $i\le n$; 
therefore, for every $n\in \Z$ and every $r\ge k$ we have
\[ dZ^{n}_r\subset   
d(F^{n+1}{CE}(V,V))\cap F^{n+r}{CE}(V,V)=dZ^{n+1}_{r-1}\]
and this implies $d_r=0$.

As regards the second item, we assume $k>1$, being the case $k=1$ completely trivial. 
Considering the identity map $\Id_V\in \Hom^0_{\K}(V,V)$ as an element of 
$F^1{CE}(V,V)$ 
we have:
\[q_k=\frac{1}{k-1}[q_k,\Id_V]_{NR}\in F^k{CE}(V,V),\qquad \Id_V\in Z^{1,-1}_{k-1}\;.\]

If the spectral sequence degenerates at $E^{1,-1}_{k-1}$, then the class of $q_k$ is trivial 
in $E_{k-1}^{k,1-k}$; in particular 
\[q_k\in d(F^2{CE}(V,V))+F^{k+1}{CE}(V,V)\]
and this implies $q_k=0$.
\end{proof}

\begin{lemma}\label{lem.lemmachiave} 
Let $(V,0,q_2,0,\ldots,0,q_i,\ldots)$ be a minimal $L_{\infty}[1]$-algebra such that 
$q_j=0$ for every $2<j<i$ and some $2<i$. Hence, by Lemma~\ref{lem.criteriodicollassamentoparziale} we have  $E(V,V)_2^{p,q}=E(V,V)^{p,q}_{i-1}$.
Denoting by $e\in E(V,V)^{1,-1}_2$ its Euler class we have:

\begin{enumerate}

\item $[q_2,q_i]_{NR}=0$;

\item $d_r(e)=0\in E(V,V)^{r+1,-r}_r$ for every $1\le r<i-1$;

\item if $d_{i-1}(e)=0\in E(V,V)^{i,1-i}_{i-1}=E(V,V)^{i,1-i}_{2}$, then 
there exists $\alpha\in \Hom^0_{\K}(V^{\odot i-1},V)$ such that 
$q_i=[q_2,\alpha]_{NR}$.  
\end{enumerate}

\end{lemma}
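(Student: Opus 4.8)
The idea is to work directly with the Nijenhuis–Richardson bracket, exploiting that the codifferential splits as $Q = \widehat{q_2} + \widehat{q_i} + (\text{higher})$, with the higher terms landing in $F^{\ge i+1}$. Let me denote $Q_2=\widehat{q_2}$.

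First I would prove item (1). The Maurer–Cartan relation $[Q,Q]=0$, when expanded and read off on the component $V^{\odot n}\to V^{\odot n-i}$ (equivalently, the identity $\sum_{k}[q_k,q_{n+1-k}]_{NR}=0$ on the appropriate arity), gives $[q_2,q_i]_{NR}=0$ since all intermediate $q_j$ with $2<j<i$ vanish and $[q_i,q_i]_{NR}$ is already of higher arity shift. Concretely, $[q_2,q_i]_{NR}$ is the unique term of the right arity, so it must vanish on its own. This is the easy bookkeeping step.

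For item (2): by Lemma~\ref{lem.criteriodicollassamentoparziale} (applied to $f=\Id$, with $q_3=\dots=q_{i-1}=0$) we already know $d_2=\dots=d_{i-2}=0$ on the whole spectral sequence, hence in particular $d_r(e)=0$ for $2\le r<i-1$; and $d_1(e)=0$ is the content of the lemma just before Definition~\ref{def.eulerclassLinfinity}. So item (2) is essentially a citation of already-established facts and requires no new work.

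The heart of the lemma is item (3), and this is where I expect the real obstacle. The strategy: represent the Euler class by the Euler derivation, which at the cochain level is the coderivation $\widehat{\epsilon}$ with corestriction $\epsilon(v)=(\bar v+1)v$ — equivalently, up to the inner-derivation ambiguity, one can use (a multiple of) the identity $\Id_V\in\Hom^0_{\K}(V,V)\subset F^1{CE}(V,V)$, since $[q_i,\Id_V]_{NR}=(i-1)q_i$ and $[q_2,\Id_V]_{NR}=q_2$ and these measure the failure of $\Id_V$ to be $d$-closed. One computes $d\,\Id_V = [Q,\Id_V]_{NR}$: the $Q_2$-part gives $q_2\in F^2$, which is killed in $E_1$ (this is $d_1(e)=0$), and after correcting $\Id_V$ by an element of $F^2$ to kill the $q_2$-term through $d_1$, one is left — using that $q_3=\dots=q_{i-1}=0$ — with a representative of $d_{i-1}(e)$ equal (up to sign and the factor $i-1$) to the class of $q_i$ in $E_{i-1}^{i,1-i}$. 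Hence $d_{i-1}(e)=0$ forces $q_i\in d_{i-1}(\text{something})$, i.e.\ $q_i$ becomes exact at page $i-1$: there exist $\beta\in F^2{CE}(V,V)$ and $\gamma\in F^{i+1}{CE}(V,V)$ with $q_i = d\beta+\gamma = [Q,\beta]_{NR}+\gamma$. The delicate point is to extract from this an \emph{exact} identity $q_i=[q_2,\alpha]_{NR}$ with $\alpha\in\Hom^0_{\K}(V^{\odot i-1},V)$ on the nose: one must project $\beta$ onto its arity-$(i-1)$ component $\alpha$, check that all other arity components of $[Q,\beta]_{NR}+\gamma$ either vanish or can be absorbed, and check degree ($\alpha$ must have degree $0$, which is forced by $q_i$ having degree $+1$ and $q_2$ degree $+1$ under the arity shift). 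Getting the arities and the interplay between $d_{i-1}$ at the associated-graded level and the genuine cochain-level equation lined up precisely — so that the "higher order" corrections $\gamma$ really drop out rather than contributing — is the main technical hurdle; I would handle it by carefully writing $d=[Q_2,-]_{NR}$ modulo $F^{\ge p+i-1}$ on $F^p$, exactly as in the proof of Lemma~\ref{lem.criteriodicollassamentoparziale}, so that the only surviving obstruction is the one governed by $Q_2$.
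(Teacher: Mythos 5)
Items (1) and (2) of your plan are correct and consistent with the paper: (1) is the same bookkeeping with the Maurer--Cartan identity in arity $i+1$, and for (2) the paper argues directly (showing $[q,e]_{NR}\in F^i$, see below) rather than citing Lemma~\ref{lem.criteriodicollassamentoparziale}, but your citations yield the same conclusion. The overall strategy for (3) --- represent the Euler class by an explicit cochain, compute its differential with the Nijenhuis--Richardson bracket, and read off arity components using that only $[q_2,-]_{NR}$ contributes in arities $\le i$ --- is also the paper's strategy, and your handling of the final extraction of $\alpha$ is essentially what the paper does.

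There is, however, a genuine error at the start of your item (3): the identity $\Id_V$ does \emph{not} represent a multiple of the Euler class, not even up to the inner-derivation ambiguity. The Euler derivation is $\epsilon(v)=(\bar v+1)v=\Id_V+\deg$, and $\deg\colon v\mapsto \bar v\,v$ is in general not an inner derivation (it is inner exactly in the ``intrinsically formal'' situation of the corollary on subalgebras of the form $\{x\mid [h,x]=\deg(x)\,x\}$). Concretely, $[q_2,\Id_V]_{NR}=q_2$, which is a nonzero element of $E(V,V)^{2,-1}_1=\Hom^1_{\K}(V^{\odot 2},V)$ whenever $q_2\neq 0$; hence $d_1(\Id_V)\neq 0$ and $\Id_V$ does not even survive to $E_2^{1,-1}$. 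Your proposed repair --- ``correcting $\Id_V$ by an element of $F^2$ to kill the $q_2$-term through $d_1$'' --- cannot work: adding an element of $F^2CE(V,V)$ does not change the class in $E_1^{1,-1}$ and so cannot alter $d_1$ of that class; likewise the assertion that ``$q_2\in F^2$ is killed in $E_1$'' is false, since for a minimal algebra $E_1^{2,-1}$ is all of $\Hom^1_{\K}(V^{\odot 2},V)$. The needed correction is by $\deg$, which lies in $F^1\setminus F^2$, i.e.\ you must use the Euler derivation itself. The identity your plan is missing, and on which the paper's proof of (2) and (3) rests, is
\[ [\beta,\epsilon]_{NR}=(j-h-1)\,\beta \qquad\text{for }\beta\in\Hom^h_{\K}(V^{\odot j},V), \]
which gives $[q_2,\epsilon]_{NR}=0$ \emph{on the nose} and $[q_j,\epsilon]_{NR}=(j-2)q_j$, so that $d\epsilon=[q,\epsilon]_{NR}=(i-2)q_i+(i-1)q_{i+1}+\cdots\in F^iCE(V,V)$ and $d_{i-1}(e)$ is the class of $(i-2)q_i$. (Note the factor is $i-2$, not $i-1$; your $i-1$ comes from the same $\Id_V$ versus $\epsilon$ confusion, though for $i\ge 3$ either factor is invertible.) With this representative, the rest of your outline --- producing $\alpha_j\in\Hom^0_{\K}(V^{\odot j},V)$ with $[q,\sum_j\alpha_j]_{NR}-(i-2)q_i\in F^{i+1}CE(V,V)$ and comparing components of arity $\le i$, where only $[q_2,-]_{NR}$ can contribute, to get $[q_2,\alpha_{i-1}]_{NR}=(i-2)q_i$ --- is exactly the paper's argument and closes the proof with $\alpha=\alpha_{i-1}/(i-2)$.
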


\begin{proof}
The first part is clear since $\sum_j [q_j, q_{i+2-j}]_{NR}=0$. 
The Euler class  $e$ is induced by 
the linear map
\[ e\in \Hom^0_{\K}(V,V),\qquad e(v)=(\bar{v}+1)v\;,\]
and for every  $\beta\in \Hom^h_{\K}(V^{\odot j},V)$ we have 
\[ [\beta,e]_{NR}=(j-h-1)\beta\;.\]

Therefore, setting $q=\sum q_j$, we have  $[q_2,e]_{NR}=0$, 
\[ [q,e]_{NR}=(i-2)q_i+(i-1)q_{i+1}+\cdots\quad\in F^{i}{CE}(V,V)\cap d(F^1{CE}(V,V)).\]
In particular $[q,e]_{NR}\in Z^{j+1,-j}_{j}$ for every $j<i$ and this implies 
that  $d_r(e)=0\in E(V,V)^{r+1,-r}_r$ for $1\le r<i-1$.
If $d_{i-1}(e)=0\in E^{i,1-i}_{i-1}$, then 
\[ (i-2)q_i+(i-1)q_{i+1}+\cdots\quad\in Z^{i+1,-i}_{i-2}+dZ^{2,-1}_{i-2}\]
and then there exists  a sequence 
$\alpha_j\in \Hom^0_{\K}(V^{\odot j},V)$, $j\ge 2$, such that 
\[ [q,\sum\alpha_j]_{NR}-(i-2)q_i\in F^{i+1}{CE}(V,V)\;\]
this implies that $[q_2,\alpha_j]_{NR}=0$ for $j<i-1$ and 
$[q_2,\alpha_{i-1}]_{NR}=(i-2)q_i$. 
In particular $\alpha=\dfrac{\alpha_{i-1}}{i-2}$ is the required element.

\end{proof}

\begin{theorem}\label{thm.formalityminilamlLinfinity1} 
For a minimal $L_{\infty}[1]$-algebra 
$(V,0,q_2,q_3,\ldots)$ with Euler class $e\in E(V,V)_2^{1,-1}$, the following conditions are equivalent:

\begin{enumerate} 

\item\label{item1.thm.formalityminilamlLinfinity1} 
there exists an $L_{\infty}$-isomorphism $f\colon 
(V,0,q_2,0,0,\ldots)\dashrightarrow (V,0,q_2,q_3,\ldots)$;

\item\label{item2.thm.formalityminilamlLinfinity1}  the spectral sequence $E(V,V)^{p,q}_r$ degenerates at $E_2$;

\item\label{item3.thm.formalityminilamlLinfinity1}  $d_r(e)=0\in E(V,V)^{r+1,-r}_r$ for every $r\ge 2$.

\end{enumerate}
\end{theorem}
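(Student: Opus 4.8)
The plan is to prove the cycle of implications $(\ref{item1.thm.formalityminilamlLinfinity1})\Rightarrow(\ref{item2.thm.formalityminilamlLinfinity1})\Rightarrow(\ref{item3.thm.formalityminilamlLinfinity1})\Rightarrow(\ref{item1.thm.formalityminilamlLinfinity1})$, using the preparatory lemmas of this section. The implication $(\ref{item1.thm.formalityminilamlLinfinity1})\Rightarrow(\ref{item2.thm.formalityminilamlLinfinity1})$ is the easy direction: an $L_{\infty}$-isomorphism is in particular a weak equivalence, and by Theorem~\ref{thm.invarianzaalminimale} (or directly Proposition~\ref{prop.confrontoss}) the spectral sequence $E(V,V)_r$ is invariant under weak equivalence for $r\ge 1$; since $(V,0,q_2,0,0,\ldots)$ has only the quadratic operation, Lemma~\ref{lem.formalityimpliesdegeneration}(1) with $k=2$ shows its spectral sequence degenerates at $E_2$, and transporting this along $f$ gives degeneration at $E_2$ for $(V,0,q_2,q_3,\ldots)$. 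The implication $(\ref{item2.thm.formalityminilamlLinfinity1})\Rightarrow(\ref{item3.thm.formalityminilamlLinfinity1})$ is immediate from the very definition of degeneration at $E_2$ (Definition~\ref{def.collapsing1}): if $d_r=0$ for all $r\ge 2$ then in particular $d_r(e)=0$ for all $r\ge 2$.

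The substance is $(\ref{item3.thm.formalityminilamlLinfinity1})\Rightarrow(\ref{item1.thm.formalityminilamlLinfinity1})$, which I would prove by an inductive ``killing the first higher operation'' argument. Suppose $d_r(e)=0$ for every $r\ge 2$. Assume inductively that $(V,0,q_2,q_3,\ldots)$ is $L_{\infty}$-isomorphic to a minimal $L_{\infty}[1]$-algebra $(V,0,q_2,0,\ldots,0,q_i',q_{i+1}',\ldots)$ in which $q_j'=0$ for $2<j<i$, for some $i\ge 3$ (the base case $i=3$ being vacuous). Since the spectral sequence and the Euler class are weak-equivalence invariants, the hypothesis $d_r(e)=0$ persists for this new presentation, and by Lemma~\ref{lem.criteriodicollassamentoparziale} we have $E(V,V)_2=E(V,V)_{i-1}$, so $d_{i-1}(e)=0$. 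Lemma~\ref{lem.lemmachiave}(3) then produces $\alpha\in\Hom^0_{\K}(V^{\odot i-1},V)$ with $q_i'=[q_2,\alpha]_{NR}$. The point now is that the gauge/coordinate change given by the $L_{\infty}$-automorphism whose corestriction is $\Id_V+\widehat{\alpha}+(\text{higher order})$ — equivalently $\exp$ of the inner coderivation associated to $\alpha$, or the solution of the relevant flow — conjugates $Q=\widehat{q_2}+\widehat{q_i'}+\cdots$ into a codifferential whose Taylor components in arities $3,\ldots,i$ all vanish: the arity-$i$ component changes by $[q_2,\alpha]_{NR}=q_i'$ and is thereby removed, while arities $<i$ are untouched because $\alpha$ lives in arity $i-1\ge 2$. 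This advances the induction from $i$ to $i+1$. Passing to the limit (the automorphisms can be composed since each is the identity plus terms of strictly increasing arity, so the composite is well defined as an automorphism of $S^c(V)$), one obtains an $L_{\infty}$-isomorphism onto $(V,0,q_2,0,0,\ldots)$, which is assertion~(\ref{item1.thm.formalityminilamlLinfinity1}).

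The main obstacle is the conjugation step: one must check that the inner automorphism built from $\alpha$ really changes the arity-$i$ component of $Q$ by exactly $[q_2,\alpha]_{NR}$ and leaves lower arities fixed. This is a standard but slightly delicate computation with the Nijenhuis--Richardson bracket: writing the automorphism as $e^{\operatorname{ad}}$ or as a perturbation of the identity, the leading correction to $Q$ is the bracket $[Q,\widehat{\alpha}]$, whose lowest-arity part is $[\widehat{q_2},\widehat{\alpha}]=\widehat{[q_2,\alpha]_{NR}}$ in arity $i$, and all corrections have arity $\ge i$; combined with $q_i'=[q_2,\alpha]_{NR}$ from Lemma~\ref{lem.lemmachiave}(3) this kills $q_i'$ without reintroducing anything in arities $3,\ldots,i-1$. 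I would also note, for completeness, that Lemma~\ref{lem.lemmachiave}(1) guarantees $[q_2,q_i']_{NR}=0$, which is the consistency condition ensuring $\alpha$ as above is compatible with the $L_{\infty}$ relations. Everything else reduces to invoking the already-established homotopy invariance (Proposition~\ref{prop.confrontoss}, Theorem~\ref{thm.invarianzaalminimale}) and the partial-degeneration Lemmas~\ref{lem.criteriodicollassamentoparziale} and~\ref{lem.lemmachiave}.
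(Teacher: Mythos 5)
Your proposal is correct and follows essentially the same route as the paper: the easy implications via Lemma~\ref{lem.formalityimpliesdegeneration} and homotopy invariance, and for $(\ref{item3.thm.formalityminilamlLinfinity1})\Rightarrow(\ref{item1.thm.formalityminilamlLinfinity1})$ the same inductive gauge argument, using Lemma~\ref{lem.lemmachiave} to find $\alpha$ with $q_i=[q_2,\alpha]_{NR}$ and conjugating $Q$ by $e^{\widehat{\alpha}}$ to kill the lowest nonzero higher operation, then taking the infinite composition. The paper carries out the conjugation step you flag as delicate by the explicit computation $r=e^{[\alpha,-]_{NR}}(q)\equiv q_2+q_i-[q_2,\alpha]_{NR}\equiv q_2$ modulo arities $>i$, exactly as you describe.
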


\begin{proof} By Lemma~\ref{lem.formalityimpliesdegeneration} and the homotopy invariance
of the Chevalley-Eilenberg spectral sequence, 
we only need to prove $(\ref{item3.thm.formalityminilamlLinfinity1}\Rightarrow 
\ref{item1.thm.formalityminilamlLinfinity1})$. If $q_i=0$ for every $i>2$ there is nothing to prove, 
otherwise let $i\ge 3$ be the smallest integer such that $q_i\not=0$; 
by Lemma~\ref{lem.lemmachiave} 
there exists an operator $\alpha\in \Hom^0_{\K}(V^{\odot i-1},V)$ 
such that $[q_2,\alpha]_{NR}=q_i$.
Denoting by $\widehat{\alpha}\in \Coder^0({S^c}(V),{S^c}(V))$ the corresponding 
pronilpotent coderivation, by $Q=\sum_j\widehat{q_j}$ and by  
\[ R=e^{-\widehat{\alpha}}Qe^{\widehat{\alpha}}=
e^{[\widehat{\alpha},-]}(Q)=Q+[\widehat{\alpha},Q]+\frac{1}{2}[\widehat{\alpha},[\widehat{\alpha},Q]]+\cdots
\]
we have that $e^{\widehat{\alpha}}\colon (V,R)\dashrightarrow (V,Q)$ is an $L_{\infty}$-morphism.
Denoting by $r$ and $q=\sum q_i$ the corestrictions of $R$ and $Q$, respectively, we have:
\[ r=e^{[\alpha,-]_{NR}}(q)=q+[\alpha,q]_{NR}+\cdots\equiv q_2+q_i-[q_2,\alpha]_{NR}\equiv q_2\quad 
\left(\mod{\prod_{j>i}\Hom^1_{\K}(V^{\odot j},V)}\right).\]
Therefore $r=q_2+r_{i+1}+r_{i+2}+\cdots$ and then we have an $L_{\infty}$-isomorphism
\[ e^{\widehat{\alpha}}\colon (V,q_2,0,\ldots,0,r_{i+1},\ldots)\dashrightarrow 
(V,q_2,0,\ldots,0,q_i,q_{i+1},\ldots).\]
Since $e^{\widehat{\alpha}}$ is the identity on $V^{\odot j}$ for every $j\le i-2$, we can repeat the
the procedure infinitely many times and take $f$ as the infinite composition product 
of the above exponentials.  

\end{proof}

\begin{corollary}\label{cor.abelianitycriterionminimal}  
For a minimal $L_{\infty}[1]$-algebra 
$(V,0,q_2,q_3,\ldots)$ the following conditions are equivalent:

\begin{enumerate} 

\item $q_i=0$ for every $i$;

\item the spectral sequence $E(V,V)^{p,q}_r$ degenerates at $E_1$;

\item the spectral sequence $E(V,V)^{p,q}_r$ degenerates at $E^{1,-1}_1$.
\end{enumerate}
\end{corollary}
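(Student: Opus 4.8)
The plan is to deduce Corollary~\ref{cor.abelianitycriterionminimal} from Theorem~\ref{thm.formalityminilamlLinfinity1} together with the structure of the Chevalley-Eilenberg spectral sequence at the first page, rather than re-proving everything from scratch. The implications $(1)\Rightarrow(2)\Rightarrow(3)$ are the easy direction: if all $q_i$ vanish then $V$ is a trivial $L_\infty[1]$-algebra, the differential of ${CE}(V,V)$ is zero (the corestriction identifies it with $\Hom^*_\K(S^c(V),V)$ with differential $[q_1,-]_{NR}=0$, but also all higher brackets are zero), so $E_0=E_1=E_\infty$ and in particular $d_r=0$ for all $r\ge 1$, a fortiori on the strand $E^{1,-1}_r$. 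The implication $(2)\Rightarrow(3)$ is trivial since degeneration at $E_1$ means $d_r=0$ for all $r\ge 1$, which restricts to the spot $(1,-1)$.

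The substantive direction is $(3)\Rightarrow(1)$. First I would observe that degeneration at $E^{1,-1}_1$ means in particular $d_1\colon E^{1,-1}_1\to E^{2,-2}_1$ vanishes on the Euler derivation, but more importantly that $d_r(e)=0$ for \emph{all} $r\ge 1$, where $e\in E^{1,-1}_1=\Hom^0_\K(H^*(V),H^*(V))=\Hom^0_\K(V,V)$ is the Euler derivation $e(v)=(\bar v+1)v$ (recall $V$ is minimal so $H^*(V)=V$). Since degeneration at $E^{1,-1}_1$ certainly implies degeneration at $E^{1,-1}_2$, Theorem~\ref{thm.formalityminilamlLinfinity1} applies and gives an $L_\infty$-isomorphism $f\colon(V,0,q_2,0,0,\ldots)\dashrightarrow(V,0,q_2,q_3,\ldots)$; hence $V$ is formal with $q_2$ as its only nontrivial bracket. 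So it remains to show $q_2=0$ as well. For this I would invoke the second part of Lemma~\ref{lem.formalityimpliesdegeneration}: transporting the spectral sequence along $f$ (homotopy invariance, Theorem~\ref{thm.invarianzaalminimale}) reduces us to the $L_\infty[1]$-algebra $(V,0,q_2,0,0,\ldots)$, which has $q_i=0$ for $i\ne 2$, i.e.\ is in the scope of Lemma~\ref{lem.formalityimpliesdegeneration} with $k=2$. By hypothesis its spectral sequence degenerates at $E^{1,-1}_{1}=E^{1,-1}_{k-1}$, so part (2) of that lemma yields $q_2=0$.

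The main obstacle — and the point requiring a little care — is checking that the hypothesis ``$E(V,V)$ degenerates at $E^{1,-1}_1$'' is genuinely preserved under the homotopy invariance isomorphism of Theorem~\ref{thm.invarianzaalminimale}, i.e.\ that the isomorphism $E(V,V)^{p,q}_r\to E(W,W)^{p,q}_r$ is compatible with the differentials $d_r$ and sends the Euler derivation to the Euler derivation. This is exactly the content of Theorem~\ref{thm.invarianzaalminimale} (``preserves the Euler derivations'') combined with the fact that morphisms of spectral sequences commute with all $d_r$; so the ``obstacle'' is really just a bookkeeping step, provided one is careful that $f$ being an $L_\infty$-\emph{isomorphism} (not merely a weak equivalence) lets us identify $(V,0,q_2,q_3,\ldots)$ and $(V,0,q_2,0,0,\ldots)$ at the level of spectral sequences. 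An alternative, perhaps cleaner, route avoids invoking Theorem~\ref{thm.formalityminilamlLinfinity1} altogether: argue directly that $(3)$ forces, by repeated use of Lemma~\ref{lem.lemmachiave} and an induction on the smallest $i$ with $q_i\ne 0$, successive cancellation of $q_i$ by coboundaries, and then a final application of Lemma~\ref{lem.formalityimpliesdegeneration}(2) with $k=2$ to kill $q_2$; but routing through the already-proved Theorem~\ref{thm.formalityminilamlLinfinity1} is shorter and I would present it that way.
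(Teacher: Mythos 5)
Your proof is correct and follows the paper's own route: the paper's proof of this corollary is literally ``immediate consequence of Theorem~\ref{thm.formalityminilamlLinfinity1} and Lemma~\ref{lem.formalityimpliesdegeneration}'', which is exactly the argument you spell out. The only step worth stating explicitly at the end is that Lemma~\ref{lem.formalityimpliesdegeneration}(2) gives $q_2=0$ for the pure quadratic model, and since an $L_{\infty}$-isomorphism conjugates codifferentials, the codifferential of the original algebra equals $f\cdot 0\cdot f^{-1}=0$, so every $q_i$ of $(V,0,q_2,q_3,\ldots)$ vanishes literally, as condition (1) demands.
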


\begin{proof} Immediate consequence of Theorem~\ref{thm.formalityminilamlLinfinity1} and 
Lemma~\ref{lem.formalityimpliesdegeneration}.
\end{proof}

\begin{remark} In \cite{kaledin} it is  proved (in the framework of $A_{\infty}$-algebras) 
that 
property \eqref{item1.thm.formalityminilamlLinfinity1} of Theorem~\ref{thm.formalityminilamlLinfinity1}  is equivalent to the vanishing of a certain cohomology class, called ``Kaledin class'' in \cite{lunts}.
For a minimal $L_{\infty}[1]$-algebras $(V,0,q_2,\ldots)$,  the Kaledin class may  be defined in the following way:
let $t$ be a central formal indeterminate of degree 0; by 
extending the Nijenhuis-Richardson bracket to ${CE}(V,V)[[t]]$ in the obvious way, by homogeneity we have   
\[ [q(t),q(t)]_{NR}=0,\qquad \text{where}\quad q(t)=q_2+tq_3+t^2q_4+\cdots\]
and then $d(t)=[q(t),-]_{NR}$ is a differential on the 
$\K[[t]]$-module ${CE}(V,V)[[t]]$. 

Taking the formal derivative on the variable $t$ we have 
\[\de_t q(t)=q_3+2tq_4+\cdots,\qquad 
[q(t),\de_t q(t)]_{NR}=0\] 
and the cohomology class
\[[\de_t q(t)]\in H^1({CE}(V,V)[[t]], d(t))\]
is called the Kaledin class of the minimal $L_{\infty}[1]$-algebra $(V,0,q_2,\ldots)$;
it is supported at $t=0$, since  
\[ t\,\de_t q(t)=tq_3+2t^2q_4+\cdots=[q(t),e]_{NR}\]
where $e$ is Euler class.
In particular, the Kaledin class vanishes whenever the multiplication map 
$H^1({CE}(V,V)[[t]])\xrightarrow{\;t\;}H^1({CE}(V,V)[[t]])$
is injective. Notice that
there exists a short exact sequence  
of complexes
\[ 0\to  ({CE}(V,V)[[t]],d(t))\xrightarrow{\;t\;}({CE}(V,V)[[t]],d(t))\xrightarrow{t\mapsto 0}({CE}(V,V),[q_2,-]_{NR})\to 0\]
and it is an easy exercise on spectral sequences to prove that if 
$E(V,V)^{p,q}_r$ degenerates at $E^{a,-a}_2$ for every $a\ge 1$, then the map
\[H^0({CE}(V,V)[[t]],d(t))\xrightarrow{t\mapsto 0}H^0({CE}(V,V),[q_2,-]_{NR})\]
is surjective.

\end{remark}

The following two corollaries follow immediately from the above results together with Theorem~\ref{thm.invarianzaalminimale}.

\begin{corollary}\label{cor.formalityLinfinity} 
For an  $L_{\infty}[1]$-algebra 
$V$ with Euler class $e\in E(V,V)^{1,-1}_2$, the following conditions are equivalent:

\begin{enumerate} 

\item $V$ is formal;

\item the spectral sequence $E(V,V)^{p,q}_r$ degenerates at $E_2$;

\item $d_r(e)=0\in E(V,V)^{r+1,-r}_r$ for every $r\ge 2$.

\end{enumerate}
\end{corollary}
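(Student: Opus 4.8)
The plan is to reduce the statement for a general $L_\infty[1]$-algebra $V$ to the minimal case already settled in Theorem~\ref{thm.formalityminilamlLinfinity1}. First I would invoke the minimal model theorem (Theorem~\ref{thm.minimalmodel}) to pick a minimal $L_\infty[1]$-algebra $W=(H^*(V),0,r_2,r_3,\ldots)$ together with a pair of weak equivalences relating $V$ and $W$. By Theorem~\ref{thm.invarianzaalminimale}, these induce isomorphisms $E(V,V)^{p,q}_r\cong E(W,W)^{p,q}_r$ for all $r\ge 1$, compatible with the differentials $d_r$ and carrying the Euler class of $V$ to the Euler class of $W$. Hence each of the three conditions in the statement for $V$ is equivalent to the corresponding condition for $W$: condition (2) transports because the $d_r$ match up, and condition (3) transports because both the spectral sequence pages and the Euler classes are identified.

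Next I would handle the equivalence between formality of $V$ and formality of $W$. By definition $V$ is formal if and only if it is weak equivalent to a pure quadratic $L_\infty[1]$-algebra, and since weak equivalence is transitive and $V$ is weak equivalent to its minimal model $W$, this happens if and only if $W$ is formal. On the other hand, a minimal $L_\infty[1]$-algebra $W=(H^*(V),0,r_2,r_3,\ldots)$ is formal precisely when it is isomorphic (as an $L_\infty[1]$-algebra) to $(H^*(V),0,r_2,0,0,\ldots)$; equivalently, when there exists an $L_\infty$-isomorphism $(H^*(V),0,r_2,0,0,\ldots)\dashrightarrow (H^*(V),0,r_2,r_3,\ldots)$, which is exactly item~\eqref{item1.thm.formalityminilamlLinfinity1} of Theorem~\ref{thm.formalityminilamlLinfinity1} applied to $W$.

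Finally I would assemble the chain of equivalences: for $V$,
\[
\text{(1)}\iff W \text{ formal}\iff \text{item (1) for } W\iff \text{item (2) for } W\iff \text{item (2) for } V,
\]
and likewise item (3) for $V$ is equivalent to item (3) for $W$, which by Theorem~\ref{thm.formalityminilamlLinfinity1} is equivalent to item (2) for $W$. Chaining these gives the three-way equivalence claimed for $V$.

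I do not expect any serious obstacle here, since all the heavy lifting — the actual construction of the formalizing $L_\infty$-isomorphism by an inductive exponential argument, and the homotopy invariance of the spectral sequence and Euler class — has already been done in Lemma~\ref{lem.lemmachiave}, Theorem~\ref{thm.formalityminilamlLinfinity1} and Theorem~\ref{thm.invarianzaalminimale}. The only point requiring a little care is to make sure the isomorphism $E(V,V)^{p,q}_r\cong E(W,W)^{p,q}_r$ from Theorem~\ref{thm.invarianzaalminimale} is genuinely a morphism of spectral sequences (so it commutes with all the $d_r$, not merely an isomorphism on each page), and that it sends Euler class to Euler class; both are asserted in the statement of that theorem, so the argument reduces to a clean bookkeeping of these transfers.
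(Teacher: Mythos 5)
Your proposal is correct and follows essentially the same route as the paper, which states that this corollary ``follows immediately'' from Theorem~\ref{thm.formalityminilamlLinfinity1} together with Theorem~\ref{thm.invarianzaalminimale}: pass to the minimal model, transport the spectral sequence and the Euler class via the isomorphism of Theorem~\ref{thm.invarianzaalminimale}, and apply the minimal-case criterion. The bookkeeping you describe is exactly the intended argument.
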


\begin{corollary}\label{cor.abelianitycriterion} 
For an  $L_{\infty}[1]$-algebra 
$V$ the following conditions are equivalent:

\begin{enumerate} 

\item $V$ is homotopy abelian;

\item the spectral sequence $E(V,V)^{p,q}_r$ degenerates at $E_1$;

\item the spectral sequence $E(V,V)^{p,q}_r$ degenerates at $E^{1,-1}_1$.
\end{enumerate}
\end{corollary}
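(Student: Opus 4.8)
The plan is to deduce this from Corollary~\ref{cor.abelianitycriterionminimal} by passing to the minimal model. First I would record the translation of condition (1) into the minimal setting: by definition $V$ is homotopy abelian if and only if it is weak equivalent to some trivial $L_{\infty}[1]$-algebra $(W,0,0,0,\ldots)$. Since a weak equivalence between minimal $L_{\infty}[1]$-algebras is automatically an isomorphism (its linear component is an isomorphism on tangent cohomology, which for minimal algebras is the whole space), Theorem~\ref{thm.minimalmodel} shows that weak equivalent $L_{\infty}[1]$-algebras have isomorphic minimal models; a trivial algebra being its own minimal model, this means exactly that the minimal model $(H^*(V),0,r_2,r_3,\ldots)$ of $V$ satisfies $r_i=0$ for every $i$.

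Next, by Theorem~\ref{thm.invarianzaalminimale} the canonical map $E(V,V)^{p,q}_r\to E(W,W)^{p,q}_r$ is an isomorphism for every $r\ge 1$, where $W$ denotes the minimal model of $V$. Hence, in the sense of Definition~\ref{def.collapsing1}, the spectral sequence $E(V,V)^{p,q}_r$ degenerates at $E_1$ (respectively at $E^{1,-1}_1$) if and only if $E(W,W)^{p,q}_r$ does. Combining this with the previous paragraph, the three conditions of the statement for $V$ are equivalent to the corresponding three conditions for the minimal $L_{\infty}[1]$-algebra $W$, and these are equivalent to one another by Corollary~\ref{cor.abelianitycriterionminimal}. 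Chaining the two equivalences proves the corollary.

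The only point that requires a little care is the first reduction, the equivalence ``homotopy abelian $\Longleftrightarrow$ minimal model trivial''; this is where I expect the (modest) work to lie, and it rests entirely on the uniqueness up to isomorphism of the minimal model from Theorem~\ref{thm.minimalmodel}. Once that is in hand, the rest is a purely formal consequence of the homotopy invariance of the Chevalley--Eilenberg spectral sequence and of the minimal case already settled in Corollary~\ref{cor.abelianitycriterionminimal}.
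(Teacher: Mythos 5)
Your proposal is correct and matches the paper's own argument, which likewise obtains this corollary immediately from Corollary~\ref{cor.abelianitycriterionminimal} together with the homotopy invariance of the spectral sequence in Theorem~\ref{thm.invarianzaalminimale}. The one point you flag as needing care --- that $V$ is homotopy abelian if and only if its minimal model has all higher operations zero, via uniqueness of minimal models --- is exactly the translation the paper implicitly uses, so there is nothing to add.
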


The equivalence 
$(1\Leftrightarrow 2)$ of Corollary~\ref{cor.abelianitycriterion}, together some nice applications,  
has been recently proved by R. Bandiera \cite{bandierakapra} in a different way.

A well known result, which is very useful in deformation theory (see e.g. \cite{algebraicBTT,semireg2011,IaconoDP,KKP}) is that 
if $f\colon V\dashrightarrow W$ is an $L_{\infty}$-morphism, $W$ is homotopy abelian and 
$f^1_1\colon H^*(V)\to H^*(W)$ is injective, then also $V$ is homotopy abelian.
If $W$ is formal, then the injectivity in tangent cohomology is not sufficient to ensure the 
formality of $V$.  However, we have the following result, proved as a consequence of  
Theorem~\ref{thm.formalityminilamlLinfinity1}.

\begin{theorem}[Formality transfer]\label{thm.formalitytransferinfinity} 
Let $f\colon V\dashrightarrow W$ be an $L_{\infty}$-morphism of $L_{\infty}[1]$-algebras such that:
\begin{enumerate}

\item $W$ is formal;

\item the map $f_*\colon E(V,V)^{p,1-p}_2\to E(V,W;f)^{p,1-p}_2$ is injective for every 
$p\ge 3$.
\end{enumerate}
Then also $V$ is formal.
\end{theorem}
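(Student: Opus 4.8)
The plan is to reduce immediately to the minimal setting and then run the obstruction-vanishing argument of Theorem~\ref{thm.formalityminilamlLinfinity1}, but carried out relative to the morphism $f$. First I would replace $V$ and $W$ by their minimal models via Theorem~\ref{thm.minimalmodel}; by the homotopy invariance established in Proposition~\ref{prop.confrontoss} and Theorem~\ref{thm.invarianzaalminimale} all the spectral sequences $E(V,V)_r$, $E(W,W)_r$ and $E(V,W;f)_r$, together with the morphisms $f_*$ between them and the Euler classes, are unchanged. So we may assume $V=(V,0,q_2,q_3,\dots)$ and $W=(W,0,r_2,r_3,\dots)$ are minimal, and — since $W$ is formal — that in fact $r_i=0$ for all $i\ge 3$, i.e. $R=\widehat{r_2}$. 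By Corollary~\ref{cor.formalityLinfinity} it then suffices to show $d_k(e)=0\in E(V,V)^{k+1,-k}_k$ for every $k\ge 2$, where $e\in E(V,V)^{1,-1}_2$ is the Euler class of $V$.

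The induction is the natural one: suppose $q_3=\dots=q_{i-1}=0$ and $q_i\neq 0$ for some $i\ge 3$ (if no such $i$ exists, $V$ is already pure quadratic and we are done). By Lemma~\ref{lem.criteriodicollassamentoparziale} we have $E(V,V)^{p,q}_2=E(V,V)^{p,q}_{i-1}$, and by Lemma~\ref{lem.lemmachiave}(2) we already know $d_r(e)=0$ for $1\le r<i-1$; the only thing to check at this stage of the induction is $d_{i-1}(e)=0\in E(V,V)^{i,1-i}_{i-1}$. Once that is known, Lemma~\ref{lem.lemmachiave}(3) produces $\alpha\in\Hom^0_\K(V^{\odot i-1},V)$ with $q_i=[q_2,\alpha]_{NR}$, and then — exactly as in the proof of Theorem~\ref{thm.formalityminilamlLinfinity1} — conjugating $Q$ by $e^{\widehat{\alpha}}$ kills $q_i$ modulo higher order, so after infinitely many such steps $V$ becomes $L_\infty$-isomorphic to $(V,0,q_2,0,0,\dots)$, i.e. formal.

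So the whole argument hinges on the vanishing of $d_{i-1}(e)$ in $E(V,V)^{i,1-i}_{i-1}$, and this is where hypothesis (2) enters and is the main obstacle. The idea is to push $d_{i-1}(e)$ forward along $f_*$: since $f_*$ commutes with the differentials $d_r$ of the spectral sequences (Lemma~\ref{lem.composizionemorfismi}, Proposition~\ref{prop.confrontoss}) and preserves Euler classes/derivations, we have $f_*(d_{i-1}(e))=d_{i-1}(f_*e)=d_{i-1}(e_f)\in E(V,W;f)^{i,1-i}_{i-1}$. Now the target lives over the \emph{formal} algebra $W$: because $R=\widehat{r_2}$ has no components beyond the quadratic one, the description of $d_{i-1}$ on $E(V,W;f)$ — obtained as in the proof of Lemma~\ref{lem.criteriodicollassamentoparziale}, writing $Q=\widehat{q_2}+\sum_{j\ge i}\widehat{q_j}$ and using $R_{\ge 3}=0$ — shows that $d_{i-1}(e_f)$ is represented by a coboundary for the $E_1$-differential twisted appropriately; more precisely it is killed already at the relevant page because the only surviving contribution is $[q_2,\widehat{e}]$-type and $e_f=f_*e$ satisfies $d_1(e_f)=0$ by the lemma preceding Definition~\ref{def.eulerclassLinfinity}, and the higher $r_j$ that would obstruct it vanish. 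Hence $d_{i-1}(e_f)=0$ in $E(V,W;f)^{i,1-i}_{i-1}$, i.e. $f_*(d_{i-1}(e))=0$. Finally, $d_{i-1}(e)\in E(V,V)^{i,1-i}_{i-1}=E(V,V)^{i,1-i}_2$, which under hypothesis (2) (applied with $p=i\ge 3$, noting $1-i=2-p-... $ — here one must match the bidegree conventions: the relevant entry is $E(V,V)^{i,1-i}_2$, and hypothesis (2) is stated for $E(V,V)^{p,1-p}_2$, so one checks $1-i$ indeed equals the exponent occurring after accounting for the décalage shift) injects into $E(V,W;f)^{i,1-i}_2=E(V,W;f)^{i,1-i}_{i-1}$; injectivity then forces $d_{i-1}(e)=0$ in $E(V,V)$, completing the inductive step. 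The delicate points to get right in the full write-up are (a) that the bidegree $(i,1-i)$ appearing in $d_{i-1}(e)$ really is the one covered by hypothesis (2) at index $p=i$, and (b) the verification that $d_{i-1}(e_f)$ genuinely vanishes in $E(V,W;f)_{i-1}$ — this uses crucially that $W$ carries no higher operations, so that the only part of $R\widehat{\alpha}-\widehat{\alpha}Q$ that can land in the right filtration degree is controlled by the already-established relation $d_1(e_f)=0$.
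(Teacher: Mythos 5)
Your overall architecture is the right one and matches the paper's: reduce to $V$ minimal and $W$ purely quadratic, let $k\ge 3$ be the first index with $q_k\neq 0$, use Lemma~\ref{lem.criteriodicollassamentoparziale} to identify $E_2$ with $E_{k-1}$, show the relevant differential of the Euler class vanishes, invoke Lemma~\ref{lem.lemmachiave}(3) and the gauge trick of Theorem~\ref{thm.formalityminilamlLinfinity1} to kill $q_k$, and iterate. The bidegree bookkeeping you worry about in point (a) is in fact unproblematic: $d_{i-1}(e)\in E(V,V)^{i,1-i}_{i-1}=E(V,V)^{i,1-i}_2$ sits exactly in the bidegree $(p,1-p)$ with $p=i\ge 3$ covered by hypothesis (2); no d\'ecalage shift is involved at this stage.

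The genuine gap is your point (b): the claim that $d_{i-1}(e_f)=0$ in $E(V,W;f)^{i,1-i}_{i-1}$ is true, but your proposed justification by direct computation does not work. The coderivation $\widehat{e_f}$ along $f$ has components $(\widehat{e_f})^2_n=\mathrm{sym}(e_f\odot f^1_{n-1})\colon V^{\odot n}\to W^{\odot 2}$ for every $n\ge 2$, so the corestriction of $R\widehat{e_f}-\widehat{e_f}Q$ in $\Hom^*_{\K}(V^{\odot n},W)$ equals $r_2\circ\mathrm{sym}(e_f\odot f^1_{n-1})-e_f\circ q_n$; for $2<n<i$ the term $e_f\circ q_n$ vanishes but $r_2\circ\mathrm{sym}(e_f\odot f^1_{n-1})$ need not, because the higher Taylor coefficients $f^1_{n-1}$ of the $L_\infty$-morphism are not assumed to vanish. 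Hence $d\widehat{e_f}$ does not lie in $F^{i}$ for the naive representative, and the statement ``the only surviving contribution is of $[q_2,\widehat e]$-type'' is false; one would have to correct the representative, which is precisely the work you have not done. The paper's proof sidesteps this entirely by using the \emph{second} comparison map $f^*\colon E(W,W)_r\to E(V,W;f)_r$ of Proposition~\ref{prop.confrontoss} together with the identity $f_*(e_V)=e_f=f^*(e_W)$: since $W$ is purely quadratic, Lemma~\ref{lem.formalityimpliesdegeneration} gives degeneration of $E(W,W)_r$ at $E_2$, whence $d_r(e_f)=f^*(d_r(e_W))=0$ for all $r\ge 2$ with no computation in $E(V,W;f)$ at all. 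You should replace your step (b) by this argument; the rest of your proof then goes through.
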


\begin{proof} It is not restrictive to assume  $V$  minimal and $W$ purely quadratic, say 
\[f\colon (V,0,q_2,q_3,\ldots)\dashrightarrow (W,0,r_2,0,0,\ldots)\;.\]
If $q_i\not=0$ for some $i>2$, let $k\ge 3$ be the smallest integer such that 
$q_k\not=0$; thus $q_i=0$ for every $2<i<k$.
According to Lemma~\ref{lem.criteriodicollassamentoparziale} we have 
\[ E(V,V)^{p,1-p}_2=E(V,V)^{p,1-p}_{k-1},\qquad E(V,W;f)^{p,1-p}_2=E(V,W;f)^{p,1-p}_{k-1}\]
and then also 
$f_*\colon E(V,V)^{p,1-p}_{k-1}\to E(V,W;f)^{p,1-p}_{k-1}$ is injective for every 
$p\ge 3$.
We have two morphisms of spectral sequences 
\[ \xymatrix{E(V,V)^{p,q}_r\ar[r]^{f_*\;}&E(V,W;f)^{p,q}_r&E(W,W)^{p,q}_r\ar[l]_{\;f^*}}\]
Denoting by $e_V,e_W$ and $e_f$ the Euler classes of $V,W$ and $f$ respectively we have 
$f_*(e_V)=e_f=f^*(e_W)$ and then for every $2\le i<k$
we have 
\[ f_*(d_ie_V)=d_i(f_*e_V)=d_i(f^*e_W)=f^*(d_ie_W)=0\in E(V,W;f)^{i+1,-i}_{i}\]
and then 
$d_ie_V=0\in E(V,V)^{i+1,-i}_{i}$ for every $2\le i<k$.
The same argument used in the proof of Theorem~\ref{thm.formalityminilamlLinfinity1} implies that, up to composition with an $L_{\infty}$-isomorphism of $V$ which is the identity 
on $V^{\odot i}$, $i<k-1$, we can assume $q_k=0$. Repeating this step, possibly infinitely many times 
for a sequence of increasing values of $k$, we prove the formality of $V$.   
\end{proof}

\bigskip
\section{Formality criteria for differential graded Lie algebras}
\label{sec.formalityDGLA}

By using the d\'ecalage isomorphism we can rewrite every formality result for $L_{\infty}[1]$-algebras 
in the framework of differential graded Lie algebras. 
If $V,L$ are graded vector spaces, then every bijective linear map 
$s\colon V\to L$ of degree $+1$ extends naturally  to a sequence of  linear isomorphisms
\[  s\colon \Hom_{\K}^n(V^{\odot k},V)\xrightarrow{\;\simeq\;} \Hom_{\K}^{n-k+1}(L^{\wedge k},L),\qquad n,k\in\Z,\quad k\ge 0\;, \]
defined by the formulas 
\begin{equation}\label{equ.formuladecalage} 
(s\phi)(sv_1,\ldots,sv_k)=(-1)^{\sum_i (k-i)\bar{v_i}}s\,\phi(v_1,\ldots,v_k),\qquad 
v_1,\ldots,v_n\in V\;.
\end{equation}

Assume now that $L$ is a differential graded Lie algebra and $(V,q_1,q_2,0,\ldots)$ its d\'ecalage, defined  as in Example~\ref{ex.decalageofDGLA}. A 
straightforward  computation shows that  the  bijective linear map of degree 
$+1$
\[ s\colon {CE}(V,V)\to  CE(L,L),\]
defined in \eqref{equ.formuladecalage} commutes, in the graded sense, with the differentials:
\[ s\bar{\delta}+[q_1,s(-)]_{NR}= 
s\delta+[q_2,s(-)]_{NR}=0\;.\]
In particular, $s$ gives 
a bijective morphism  of  spectral sequences of degree $+1$.
\[ s\colon E(V,V)^{p,q}_r\to E(L,L)^{p,q+1}_r\;\]
preserving the Euler classes.

After this, it is now clear that  Lemma~\ref{lem.degenerazioneparziale} follows from 
Lemma~\ref{lem.criteriodicollassamentoparziale}, 
Theorem~\ref{thm.formalitycriteria} is a direct 
consequence of Corollary~\ref{cor.formalityLinfinity}, while Theorem~\ref{thm.formalitytransfer} 
follows from  Theorem~\ref{thm.formalitytransferinfinity}.

\bigskip

One of the possible limitations in the application of Theorem~\ref{thm.formalitycriteria} 
is that in general, the filtration 
$F^pCE(L,L)$ is not bounded and  the spectral sequence $E(L,L)^{p,q}_r$ is not regular; 
thus it may be useful to restate our  results in terms of the spectral sequences of the 
quotient complexes $CE(L,L)/F^pCE(L,L)$.

\begin{lemma}\label{lem.ssdiquozienti} 
Let $F^pM$, $p\in\Z$, be a decreasing filtration of a differential graded abelian group $M$. 
Denote by $E^{p,q}_r$ the associated spectral sequence and, for every integer $b$, by
$E(l)^{p,q}_r$ the spectral sequence of the quotient filtered complex $M/F^lM$. 
Denoting by $\pi\colon M\to M/F^lM$ the projection, we have:

\begin{enumerate}

\item\label{it1.lem.ssdiquozienti} if $p<l$, then $\pi\colon E^{p}_r\to E(l)^{p}_r$ is injective;

\item\label{it2.lem.ssdiquozienti} if $p+r\le l$, then $\pi\colon E^{p}_r\to E(l)^{p}_r$ is surjective;

\item\label{it3.lem.ssdiquozienti} for a fixed pair $a,b$ of integers, the spectral sequences
$E^{p,q}_r$ degenerates at $E^{a,b}_k$ if and only if 
$E(l)^{p,q}_r$ degenerates at $E(l)^{a,b}_k$ for every $l$.
\end{enumerate}
\end{lemma}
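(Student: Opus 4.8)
The whole statement is a formal manipulation with the explicit cocycle description of the spectral sequence recalled from \cite{Gode} at the start of Section~\ref{sec.homotopyinvariance}: with $Z^{p}_r=\{x\in F^pM\mid dx\in F^{p+r}M\}$ and $E^p_r=Z^p_r/(Z^{p+1}_{r-1}+dZ^{p-r+1}_{r-1})$. The plan is to compute the corresponding objects for the quotient complex $\bar M=M/F^lM$ directly from this formula and compare. Write $\bar Z{}^p_r$ for the $r$-cocycles in $\bar M$ and note that $F^p\bar M=F^pM/F^lM$ for $p\le l$ and $F^p\bar M=0$ for $p\ge l$. First I would record the key elementary observation: for $p<l$ the projection $\pi$ induces a surjection $F^pM\to F^p\bar M$ whose kernel is $F^lM$, and $d(F^lM)\subseteq F^lM$, so an element $x\in F^pM$ satisfies $dx\in F^{p+r}M$ \emph{modulo} $F^lM$ iff $dx\in F^{p+r}M+F^lM$; when $p+r\le l$ this condition is just $dx\in F^{p+r}M$, i.e.\ $\pi^{-1}(\bar Z{}^p_r)=Z^p_r+F^lM$ in that range, while in general only the inclusion $\pi(Z^p_r)\subseteq\bar Z{}^p_r$ is automatic.

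For item~\eqref{it1.lem.ssdiquozienti} (injectivity for $p<l$): suppose $x\in Z^p_r$ has $\pi(x)\in \bar Z{}^{p+1}_{r-1}+d\bar Z{}^{p-r+1}_{r-1}$, i.e.\ $x=y+dw+z$ with $y\in Z^{p+1}_{r-1}$, $w\in Z^{p-r+1}_{r-1}$, $z\in F^lM$. Since $p<l$ we have $z\in F^lM\subseteq F^{p+1}M$ and $dz\in F^lM\subseteq F^{p+r}M$ (as $l\ge p+1$, and we may assume $l\ge p+r$ after noting $z=x-y-dw\in F^pM$ forces nothing extra — here one argues that $z\in F^{l}M\cap F^pM=F^lM$ and $dz=dx-dy\in F^{p+r}M$ automatically since $dx,dy\in F^{p+r}M$), hence $z\in Z^l_{?}\subseteq Z^{p+1}_{r-1}$. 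Therefore $x\in Z^{p+1}_{r-1}+dZ^{p-r+1}_{r-1}$, so $x$ is zero in $E^p_r$. This is the routine bookkeeping step; the only mild subtlety is checking that the lift $z$ of $0$ genuinely lands in the subgroups defining the relations, which follows because $F^lM$ with the induced $d$ sits inside every $F^{p'}M$ with $p'\le l$.

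For item~\eqref{it2.lem.ssdiquozienti} (surjectivity when $p+r\le l$): by the key observation above, any class in $\bar Z{}^p_r$ is represented by $\pi(x)$ with $x\in Z^p_r$ (lift a representative to $F^pM$; its $d$ lies in $F^{p+r}M+F^lM=F^{p+r}M$ since $p+r\le l$). Then $\pi$ sends $[x]\in E^p_r$ to this class, giving surjectivity. Item~\eqref{it3.lem.ssdiquozienti} is then a clean consequence: for fixed $(a,b)$ and any $r\ge k$, choosing $l$ large (namely $l\ge a+r$ and $l\ge a+1$) makes $\pi\colon E^{a,b}_r\to E(l)^{a,b}_r$ an isomorphism by \eqref{it1.lem.ssdiquozienti}--\eqref{it2.lem.ssdiquozienti}, and these isomorphisms are compatible with $d_r$ since $\pi$ is a morphism of spectral sequences; hence $d_r$ vanishes on $E^{a,b}_r$ iff it vanishes on $E(l)^{a,b}_r$ for all sufficiently large $l$. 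For the remaining small $l$ one notes that $d_r\colon E(l)^{a,b}_r\to E(l)^{a+r,b-r+1}_r$ has target a quotient of a sub of $F^{a+r}\bar M=0$ once $a+r\ge l$, so it vanishes trivially; thus ``for every $l$'' and ``for all large $l$'' coincide. I expect the main obstacle to be purely presentational: keeping the index shifts in $Z^{p\pm 1}_{r-1}$ and $Z^{p-r+1}_{r-1}$ straight and verifying that the ``error terms'' in $F^lM$ always fall into the correct summand of the denominator; there is no conceptual difficulty, only the need to be careful that $d$ preserves $F^lM$ and that $F^lM\subseteq F^{p'}M$ for all relevant $p'$.
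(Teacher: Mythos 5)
Your route is genuinely different from the paper's: you argue directly with the cocycle groups $Z^{p}_r=\{x\in F^pM\mid dx\in F^{p+r}M\}$ and the presentation $E^{p}_r=Z^{p}_r/(Z^{p+1}_{r-1}+dZ^{p-r+1}_{r-1})$, whereas the paper proves items \eqref{it1.lem.ssdiquozienti} and \eqref{it2.lem.ssdiquozienti} simultaneously by induction on $r$, via the commutative diagram comparing $E^{p-r+1}_{r-1}\to E^{p}_{r-1}\to E^{p+r-1}_{r-1}$ with the corresponding row for $M/F^lM$ and a short diagram chase (when $p+r\le l$ the outer maps are an isomorphism and an injection, giving an isomorphism on $E_r$; when $p<l<p+r$ the right-hand target vanishes, giving injectivity). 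Your item \eqref{it2.lem.ssdiquozienti} and item \eqref{it3.lem.ssdiquozienti} are correct and in substance identical to the paper's (including the splitting into $a+r\ge l$, where the target of $d_r$ is zero, and $a+r<l$, where one uses surjectivity; note the paper's printed conditions ``$a+r\ge b$'' are evidently typos for ``$a+r\ge l$'').

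There is, however, one step in your item \eqref{it1.lem.ssdiquozienti} that fails as written. You decompose $x=y+dw+z$ with $y\in Z^{p+1}_{r-1}$ and then assert $dz=dx-dy\in F^{p+r}M$ ``automatically since $dx,dy\in F^{p+r}M$''. But a lift $y\in F^{p+1}M$ of a class in the relative cocycle group of $M/F^lM$ only satisfies $dy\in F^{p+r}M+F^lM$; when $p+r>l$ (a case you cannot exclude, since item \eqref{it1.lem.ssdiquozienti} assumes only $p<l$) this is the strictly weaker condition $dy\in F^lM$, and no choice of lift need lie in $Z^{p+1}_{r-1}$ — so the claimed decomposition need not exist and $z$ need not satisfy $dz\in F^{p+r}M$. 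The repair is one line and stays inside your framework: do not put $y$ and $z$ separately into $Z^{p+1}_{r-1}$, but group them together. Indeed $w$ lifts correctly ($dw\in F^{p}M+F^lM=F^pM$ since $l\ge p$, so $w\in Z^{p-r+1}_{r-1}$), and then $y+z\in F^{p+1}M$ (because $z\in F^lM\subseteq F^{p+1}M$) with $d(y+z)=dx-d(dw)=dx\in F^{p+r}M$ because $x\in Z^{p}_r$; hence $x=(y+z)+dw\in Z^{p+1}_{r-1}+dZ^{p-r+1}_{r-1}$ and its class in $E^{p}_r$ vanishes. With this correction your argument is complete and gives a valid alternative to the paper's inductive proof.
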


\begin{proof}
The first two  properties are clear for $r=0$; by induction we may assume $r>0$ and 
items \eqref{it1.lem.ssdiquozienti}, \eqref{it2.lem.ssdiquozienti} true for $r-1$. We
have a commutative diagram
\[\xymatrix{E^{p-r+1}_{r-1}\ar[r]^{d}\ar[d]^{\pi'}&
E^p_{r-1}\ar[r]^d\ar[d]_{\pi}&E^{p+r-1}_{r-1}\ar[d]^{\pi''}\\
E(l)^{p-r+1}_{r-1}\ar[r]^{d}&
E(l)^p_{r-1}\ar[r]^d&E(l)^{p+r-1}_{r-1}}\]
If $p+r\le l$, by induction the maps $\pi',\pi$ are 
isomorphisms, $\pi''$ is injective and then also the induced map $E^{p}_r\to E(l)^{p}_r$ is an isomorphism.
If $p<l$ and $p+r>l$, then  $E(l)^{p+r-1}_{r-1}=0$, $\pi$ is injective and 
$\pi'$ is an isomorphism; thus the induced map $E^{p}_r\to E(l)^{p}_r$ is injective.

As regards \eqref{it3.lem.ssdiquozienti}, the if part follows immediately from 
\eqref{it2.lem.ssdiquozienti}. Conversely, if $E^{p,q}_r$ degenerates at 
$E^{a,b}_k$, then for every $l$ and every 
$r\ge k$ we have a commutative diagram:
\[ \xymatrix{E^{a,b}_r\ar[d]^{\pi'}\ar[r]^0&E^{a+r,b-r+1}_r\ar[d]_{\pi}\\
E(l)^{a,b}_r\ar[r]^{d_r}&E(l)^{a+r,b-r+1}_r}\]

If $a+r\ge b$ then $E(l)^{a+r,b-r+1}_r=0$, while if $a+r<b$, then 
$\pi'$ is surjective; in both cases $d_r=0$.
\end{proof}

Thus, putting together Theorem~\ref{thm.formalitycriteria} and Lemma~\ref{lem.ssdiquozienti} we obtain the following proposition.

\begin{proposition}\label{prop.formalitycriteriaquotient} 
Let $L$ be a differential graded Lie algebras. For every positive integer $l$ let
$\tau_{<l}E(L,L)^{p,q}_r$
be the spectral sequence of  the complex 
\[ \frac{CE(L,L)}{F^lCE(L,L)}=\prod_{p=0}^{l-1}\Hom^*_{\K}(L^{\wedge p},L)\;.\]
Then the following conditions are equivalent:

\begin{enumerate}

\item $L$ is formal;

\item the spectral sequence $\tau_{<l}E(L,L)^{p,q}_r$ degenerates at $E_2$ for every $l$;

\item the spectral sequence $\tau_{<l}E(L,L)^{p,q}_r$ degenerates at $E_2^{1,0}$ for every $l$;

\item\label{item.thm.formalitycriteria3}  denoting by 
$e\in \tau_{<l}E(L,L)^{1,0}_2$
the Euler class of $L$, 
we have $d_r(e)=0\in \tau_{<l}E(L,L)^{r+1,1-r}_r$ for every $r\ge 2$; and every $l$;
\end{enumerate}
\end{proposition}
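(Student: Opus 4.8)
The plan is to deduce Proposition~\ref{prop.formalitycriteriaquotient} directly from Theorem~\ref{thm.formalitycriteria} together with Lemma~\ref{lem.ssdiquozienti}, exactly as the sentence preceding the statement announces. The equivalence $(1)\Leftrightarrow(2)$ of Theorem~\ref{thm.formalitycriteria} says that $L$ is formal if and only if the spectral sequence $E(L,L)^{p,q}_r$ degenerates at $E_2$, i.e.\ $d_r=0$ for all $r\ge 2$; and item \eqref{it3.lem.ssdiquozienti} of Lemma~\ref{lem.ssdiquozienti}, applied with $M=CE(L,L)$ and $F^lM=F^lCE(L,L)$, says that the spectral sequence of $M$ degenerates at $E^{a,b}_k$ if and only if the spectral sequence $E(L,L,l)^{p,q}_r$ of every truncation $M/F^lM$ degenerates at $E(L,L,l)^{a,b}_k$. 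Here one must be slightly careful: Lemma~\ref{lem.ssdiquozienti}\eqref{it3.lem.ssdiquozienti} is stated for degeneration at a \emph{single} spot $E^{a,b}_k$, whereas global degeneration at $E_k$ means degeneration at $E^{a,b}_k$ for \emph{all} $(a,b)$ simultaneously; so one applies the lemma once for each $(a,b)$ and takes the conjunction over $(a,b)$ on both sides.

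Thus I would argue as follows. For $(1)\Leftrightarrow(2)$: by Theorem~\ref{thm.formalitycriteria}, $L$ is formal iff $E(L,L)^{p,q}_r$ degenerates at $E_2$, which by the remark above means: for every $(a,b)$ it degenerates at $E^{a,b}_2$; applying Lemma~\ref{lem.ssdiquozienti}\eqref{it3.lem.ssdiquozienti} with $k=2$ to each such $(a,b)$, this is equivalent to requiring that for every $l$ and every $(a,b)$ the truncated spectral sequence $E(L,L,l)^{p,q}_r$ degenerates at $E(L,L,l)^{a,b}_2$, i.e.\ that $E(L,L,l)^{p,q}_r$ degenerates at $E_2$ for every $l$. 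For $(2)\Leftrightarrow(4)$ and the reduction to the single spot $(a,b)=(1,0)$: by Lemma~\ref{lem.degenerazioneparziale} (the $L$-version of Lemma~\ref{lem.criteriodicollassamentoparziale}, noting that the d\'ecalage isomorphism $s\colon E(V,V)^{p,q}_r\to E(L,L)^{p,q+1}_r$ transports everything), the vanishing $d_2(e)=\cdots=d_{r-1}(e)=0$ of the Euler class forces $d_k=0$ for all $2\le k<r$; hence for the full complex $CE(L,L)$ the conditions ``degenerates at $E_2$'', ``degenerates at $E^{1,0}_2$'', and ``$d_r(e)=0$ for all $r\ge 2$'' are all equivalent — this is precisely the content of the implication $(\ref{it3.thm.formalitycriteria})\Rightarrow(\ref{it1.thm.formalitycriteria})$ of Theorem~\ref{thm.formalitycriteria} combined with the homotopy invariance and with Lemma~\ref{lem.degenerazioneparziale}. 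One must then check that this same equivalence descends to each truncation $E(L,L,l)^{p,q}_r$; for a fixed $l$ the filtration on $CE(L,L)/F^lCE(L,L)$ is finite, so Lemma~\ref{lem.degenerazioneparziale}, whose proof is purely formal in the d\'ecalage picture (it only uses the relation $d=[q_2,-]_{NR}+\cdots$ and the filtration shifts), applies verbatim to the truncated complex, giving the equivalence of $(2)$, $(3)$, $(4)$ for each $l$.

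Concretely the write-up is short: first note $(2)\Rightarrow(3)$ is trivial and $(3)\Rightarrow(4)$ is immediate since the Euler differential $d_r(e)$ lands in a subquotient that vanishes once $d_r=0$ on $E(L,L,l)^{1,0}_r$; for $(4)\Rightarrow(2)$ one invokes the truncated analog of Lemma~\ref{lem.degenerazioneparziale} to propagate the vanishing of $d_r(e)$ to the vanishing of all $d_r$ on the truncation — here the key observation is that the argument in Lemma~\ref{lem.lemmachiave}/Theorem~\ref{thm.formalityminilamlLinfinity1} showing ``$d_r(e)=0$ for all $r\Rightarrow$ formal'' is local in the filtration degree and hence unaffected by truncation at level $l$ once $l$ is large; and finally $(1)\Leftrightarrow(2)$ is the Lemma~\ref{lem.ssdiquozienti}\eqref{it3.lem.ssdiquozienti} comparison explained above, quantified over all spots $(a,b)$.

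The main obstacle I anticipate is purely bookkeeping rather than conceptual: one has to be scrupulous about the indexing convention. In Lemma~\ref{lem.ssdiquozienti} the degeneration datum is a \emph{single} spot $E^{a,b}_k$, and item~\eqref{it3.lem.ssdiquozienti} is an ``if and only if'' only after quantifying over $l$ on the truncated side; meanwhile in Proposition~\ref{prop.formalitycriteriaquotient} the conditions $(2),(3),(4)$ already contain the ``for every $l$'' quantifier, and condition $(4)$ further restricts to the Euler spot. So the real work is organizing the two quantifiers — over $(a,b)$ and over $l$ — correctly, and checking that Lemma~\ref{lem.degenerazioneparziale}, originally proved for the (possibly non-regular) spectral sequence of $CE(L,L)$, is equally valid for the finite filtration on each quotient $CE(L,L)/F^lCE(L,L)$. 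Both checks are routine: the first because conjunction commutes past the biconditional, the second because the proof of Lemma~\ref{lem.degenerazioneparziale} never used boundedness of the filtration, only the d\'ecalage identities $s\bar\delta+[q_1,s(-)]_{NR}=s\delta+[q_2,s(-)]_{NR}=0$ which restrict to the truncation.
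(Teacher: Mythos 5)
Your overall strategy is exactly the paper's: the paper's entire proof is the single sentence ``putting together Theorem~\ref{thm.formalitycriteria} and Lemma~\ref{lem.ssdiquozienti} we obtain the following proposition'', and your treatment of $(1)\Leftrightarrow(2)$ — apply Lemma~\ref{lem.ssdiquozienti}\eqref{it3.lem.ssdiquozienti} spot by spot and take the conjunction over $(a,b)$ — is the intended argument.

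The one place where you deviate, and where your write-up is shakier than it needs to be, is $(4)\Rightarrow(2)$. You propose to prove a ``truncated analog of Lemma~\ref{lem.degenerazioneparziale}'' valid inside each quotient $CE(L,L)/F^lCE(L,L)$, justified by the claim that the proof of that lemma ``only uses the d\'ecalage identities''. That is not an accurate description: Lemma~\ref{lem.degenerazioneparziale} is deduced from Lemma~\ref{lem.criteriodicollassamentoparziale} after passing to a minimal model, and the step that converts $d_{r}(e)=0$ into the vanishing of higher operations is the gauge argument of Lemma~\ref{lem.lemmachiave} and Theorem~\ref{thm.formalityminilamlLinfinity1}, all of which live on the full symmetric coalgebra $S^c(V)$; none of this is established for the truncated complexes, and re-proving it there is unnecessary work. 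The route the lemma is actually designed for is the opposite direction: use item \eqref{it1.lem.ssdiquozienti} of Lemma~\ref{lem.ssdiquozienti} (injectivity of $\pi\colon E^{p}_r\to E(l)^{p}_r$ for $p<l$) with $l>r+1$ to conclude that $d_r(e)=0$ in every truncation forces $d_r(e)=0$ in the full spectral sequence $E(L,L)^{r+1,1-r}_r$; then Theorem~\ref{thm.formalitycriteria}$(\ref{it3.thm.formalitycriteria}\Rightarrow\ref{it1.thm.formalitycriteria})$ gives formality, and Lemma~\ref{lem.ssdiquozienti}\eqref{it3.lem.ssdiquozienti} pushes the resulting degeneration back down to every $l$. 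With that correction (and the same lifting argument disposes of $(3)\Rightarrow(1)$), your proof closes; as written, the claim that the truncated Lemma~\ref{lem.degenerazioneparziale} holds ``verbatim'' is an unjustified step rather than a routine check.
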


\bigskip
\section{The role of formal DG-Lie algebras in deformation theory}

The the role of differential graded Lie algebras in deformation theory was clear since the mid sixties when Nijenhuis and Richardson \cite{NijRich64,NijRich67} observed that many deformation problems are controlled by the Maurer-Cartan equation
\[ dx+\frac{1}{2}[x,x]=0,\qquad x\in L^1,\]
for a suitable differential graded Lie algebra $L$. This point of view was extended  
 by Deligne  in terms of the philosophy that ``in 
characteristic 0 every (infinitesimal) deformation problem  is controlled by a differential graded Lie algebra, with quasi-isomorphic DG-Lie algebras giving the same deformation theory'', \cite{GoMil1}.

A more precise statement of the above philosophy can be stated in the framework of functors of Artin rings. 
Following \cite{K}, let $\K$ be a field of characteristic $0$, let $\Art$ be the category of local Artin $\K$-algebras with residue field $\K$ and let $F\colon \Art\to \Set$ be the  functor of infinitesimal deformations of some ``good'' 
algebro-geometric structure. Then there exists a differential graded Lie algebras $(L,d,[-,-])$ such that $F\simeq\Def_L$, where 
\[ \Def_L(A)=\frac{\{x\in L^1\otimes\mathfrak{m}_A\mid dx+\frac{1}{2}[x,x]=0\}}{
\text{gauge action of }\exp(L^0\otimes\mathfrak{m}_A)}\;,\]
$\mathfrak{m}_A$ is the maximal ideal of $A$ and the gauge action is defined by the formula
\[ e^a\ast
x:=x+\sum_{n=0}^{\infty}\frac{[a,-]^n}{(n+1)!}([a,x]-da),\quad  a\in L^0\otimes\mathfrak{m}_A,\;
x\in L^1\otimes\mathfrak{m}_A .\]%
The basic theorem of deformation theory asserts that if $f\colon L\to M$ is a quasi-isomorphism of differential graded Lie algebras, then the induced natural transformation $f\colon \Def_L\to \Def_M$ is an isomorphism (see \cite{ManettiSeattle} and reference therein).

\begin{proposition}\label{prop.secitsenough}
If a differential graded Lie algebra  $L$ is formal, then the two
maps
\[
\begin{array}{r}
\Def_L(\K[t]/(t^3))\to \Def_L(\K[t]/(t^2))\\
~\\
\Def_L(\K[[t]]):=\displaystyle\lim_{\leftarrow n} \Def_L(\K[t]/(t^n))\to \Def_L(\K[t]/(t^2))
\end{array}\]%
have the same image. 
\end{proposition}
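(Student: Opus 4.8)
The plan is to deduce this from the formality criterion, specifically from the degeneration of the Chevalley--Eilenberg spectral sequence at $E_2$ (Theorem~\ref{thm.formalitycriteria}), by interpreting the two maps in question in terms of the $L_\infty[1]$ minimal model and its structure maps $q_n$. First I would pass to the minimal model: since $L$ is formal, its minimal model is the pure-quadratic $L_\infty[1]$-algebra $(H^*(L),0,r_2,0,0,\ldots)$, and by the invariance of $\Def$ under quasi-isomorphism (equivalently, the functor $\Def_L$ depends only on the minimal model, via the Maurer--Cartan functor of the $L_\infty[1]$-algebra), it suffices to prove the statement for a differential graded Lie algebra whose minimal model is purely quadratic — in fact it suffices to prove it directly for the Maurer--Cartan functor $\MC$ of $(H^*(L),0,r_2,0,\ldots)$, which after d\'ecalage is just the quadratic DG-Lie algebra with zero differential and bracket induced by $r_2$.

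Next I would make the two truncated Maurer--Cartan sets explicit. Over $\K[t]/(t^2)$ a Maurer--Cartan element is simply an element $x_1\in H^1(L)$ (no equation, since the bracket term is order $t^2$), modulo gauge. Over $\K[t]/(t^3)$ an element is $x_1 t + x_2 t^2$ with the Maurer--Cartan equation forcing, in degree $t^2$, the condition $\tfrac12 r_2(x_1,x_1)=0$ in $H^2(L)$ — but then one must check that up to gauge one may always take $x_2=0$, i.e.\ that every solution over $\K[t]/(t^3)$ reducing to a given $x_1$ over $\K[t]/(t^2)$ is gauge-equivalent to one of the form $x_1 t$; and similarly that $x_1 t$ already extends over all of $\K[[t]]$. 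The key point is that because the only nonzero bracket is quadratic, the obstruction map sends $x_1$ to the class of $\tfrac12 r_2(x_1,x_1)$, and once this vanishes the element $x_1 t$ (with all higher components zero) is itself a genuine Maurer--Cartan element of $(H^*(L),0,r_2,0,\ldots)\otimes t\K[[t]]$ — there is nothing further to solve. Hence $x_1 t$ lies in the image of $\Def_L(\K[[t]])\to\Def_L(\K[t]/(t^2))$ whenever it lifts to $\Def_L(\K[t]/(t^3))$, which gives the claimed equality of images. This is the step I would carry out in detail, being a short direct computation with the gauge formula.

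The main obstacle, and the only place requiring care, is the reduction step: one must be sure that \emph{compatibility of the two functors with the d\'ecalage/minimal-model equivalence} is set up correctly, i.e.\ that $\Def_L$ and its values on $\K[t]/(t^n)$ genuinely agree with the Maurer--Cartan--plus--gauge functor of the minimal $L_\infty[1]$-algebra, so that one is entitled to replace $L$ by $(H^*(L),0,r_2,0,\ldots)$. This is standard (the basic theorem of deformation theory quoted in the previous section, together with the remark that formality means the minimal model is purely quadratic), but it is the conceptual heart of the argument; once granted, everything else is an elementary manipulation of the Maurer--Cartan equation truncated at order $t^3$ and the explicit gauge action $e^a\ast x = x+\sum_{n\ge 0}\tfrac{[a,-]^n}{(n+1)!}([a,x]-da)$.
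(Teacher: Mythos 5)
Your proposal is correct and follows essentially the same route as the paper: use quasi-isomorphism invariance of $\Def_L$ to replace $L$ by a model with trivial differential, observe that the Maurer--Cartan equation becomes $[x,x]=0$ so that the order-$t^2$ obstruction to lifting $tx_1$ is exactly $[x_1,x_1]=0$, and note that once this vanishes $tx_1$ itself is a Maurer--Cartan element over $\K[[t]]$. The paper stays entirely within DG-Lie algebras (formality gives a zig-zag of quasi-isomorphisms to $H^*(L)$ with zero differential), so your detour through the $L_\infty[1]$ minimal model and the worry about gauging away $x_2$ are unnecessary --- the statement concerns images, so mere existence of a lift suffices --- but they are harmless.
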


\begin{proof}
Since $\Def_L$ is invariant under quasi-isomorphisms  
we may assume that $L$ has trivial differential  
and therefore its Maurer-Cartan equation becomes
$[x,x]=0$, $x\in L^1$.
Therefore $tx_1\in \Def_L(\K[t]/(t^2))$ lifts to
$\Def_L(\K[t]/(t^3))$ if and only if there exists $x_2\in L^1$
such that
\[ t^2[x_1,x_1]\equiv[tx_1+t^2x_2,tx_1+t^2x_2]\equiv 0\pmod{t^3}\iff
[x_1,x_1]=0\]%
and $[x_1,x_1]=0$ implies that $tx_1\in \Def_H(\K[t]/(t^n))$ for
every $n\ge 3$.  
\end{proof}

Notice that  the formality of $L$  does not imply that
$\Def_L(\K[[t]])\to \Def_L(\K[t]/(t^3))$ is surjective.
The reader can easily verify that for a generic graded vector space $V$ and 
$L=\Hom^{*}_{\K}(V,V)$ the map  
$\Def_L(\K[t]/(t^{n+1}))\to \Def_L(\K[t]/(t^{n}))$ is not surjective for every
$n\ge 2$. 

The proof of Proposition~\ref{prop.secitsenough} also implies that, 
when the deformation problem controlled by $L$ admits a local moduli space $\mathcal{M}$ and  
$L$ is formal, then $\mathcal{M}$ is defined 
by quadratic equations; more precisely $\mathcal{M}$ is isomorphic to the germ at $0$ of the quadratic cone
defined by the equation $[x,x]=0$, $x\in H^1(L)$; for a more detailed discussion and applications we refer to  \cite{GoMil1,MartPadova}.

Probably, the simplest example of local  moduli space which is not defined by quadratic equations is 
given by the  Hilbert scheme representing embedded deformations of the closed point inside the affine scheme 
$\operatorname{Spec}(\K[x]/(x^3))$. 
By  standard deformation theory, see e.g. \cite{semireg2011}, the construction of the differential graded Lie algebra $L$ controlling this deformation problem
is described by the following three steps:
\begin{enumerate}

\item replace the $\K$-algebra $\K[x]/(x^3)$ with a Koszul-Tate resolution, for instance with the DG-algebra
\[ R=(\K[x,y],d),\qquad \deg(x)=0,\quad \deg(y)=-1,\qquad d(y)=x^3\;,\] 
where the closed point is the subscheme defined by the differential ideal $I=(x,y)\subset R$.
 
\item consider the differential graded Lie algebra $M=\Der^*(R,R)$ and its subalgebra 
\[ N=\{\alpha\in M\mid \alpha(I)\subset I\}\;.\]

\item take $L$ as the homotopy fiber of  the inclusion $N\subset M$; as a concrete description 
of $L$ we can take
\[ L=\{ a(t)\in M[t,dt]\mid a(0)=0,\; a(1)\in N\}.\] 
 
\end{enumerate}

The non formality of $L$ can also be checked algebraically,  without relying 
on deformation theory. In fact 
$M$ is the free $\K[x]$-module generated by 
\[ y\frac{d~}{dx}\,,\quad   \frac{d~}{dx}\,,\, y\frac{d~}{dy}\,,\qquad \frac{d~}{dy}\;,\]
while $N$ is the $\K[x]$-submodule generated by 
\[ y\frac{d~}{dx}\,,\quad   x\frac{d~}{dx}\,,\, y\frac{d~}{dy}\,,\qquad x\frac{d~}{dy}\;.\]
There exists a direct sum decomposition $M=N\oplus A$ where $A=A^0\oplus A^1$ is the graded vector space 
generated by $u:=\frac{d~}{dx}$ and $\frac{d~}{dy}$. Since $A$ is an abelian graded Lie subalgebra of
$M$ and  $d=x^3\frac{d~}{dy}\in N$ we can apply Voronov's construction of higher derived brackets of an inner derivation \cite{bandieraderived,voronov2}. Denoting by $P\colon M\to A$ the projection with kernel $N$, the 
maps of degree $+1$:
\[ q_n\colon A^{\odot n}\to A,\qquad q_n(a_1,\ldots,a_n)=P[[\cdots[[d,a_1],a_2],\ldots],a_n],\qquad n\ge 1,\]
give an $L_{\infty}[1]$ structure on $A$ which, according to \cite[Thm. 1.3]{bandieraderived},  
is weak equivalent to the d\'ecalage of $L$.
Since
\[ [d,u]=\left[x^3\frac{d~}{dy},\frac{d~}{dx}\right]=-3x^2\frac{d~}{dy},\qquad
[[d,u],u]=\left[-3x^2\frac{d~}{dy},\frac{d~}{dx}\right]=6x\frac{d~}{dy},\]
\[ [[[d,u],u],u]=\left[6x\frac{d~}{dy},\frac{d~}{dx}\right]=6\frac{d~}{dy},\] 
we have $q_1=q_2=0$ and $q_3\not=0$. Thus the $L_{\infty}[1]$-algebra $(A,q_1,q_2,q_3,\ldots)$ 
is not formal.

\medskip
\textbf{Acknowledgment.} I'm indebted with  E. Arbarello for useful discussions and for pointing my attention on the  paper~\cite{kaledin}.

\bigskip

\end{document}